\newtheorem{theo}{Theorem}[section]
\newtheorem{addendum}[theo]{Addendum}
\newtheorem{corol}[theo]{Corollary}
\newtheorem{prop}[theo]{Proposition}
\newtheorem{lem}[theo]{Lemma}
\theoremstyle{definition}
\newtheorem{defi}[theo]{Definition}
\newtheorem{rem}[theo]{Remark}
\newtheorem{claim}{Claim}
\def\Rr{\mathbf{R}}
\let\RR\Rr
\def\Zz{\mathbf{Z}}
\let\NN\Nn
\def\calc{\mathcal{C}}
\let\bord\partial
\let\bydef\emph
\let\epsi\varepsilon
\def\reg{\mathrm{reg}}
\def\Ric{\mathrm{Ric}}
\def\Rm{\mathrm{Rm}}
\def\Riem{\mathrm{Riem}}
\def\vol{\mathrm{vol}}
\def\hyp{\mathrm{hyp}}
\def\cusp{\mathrm{cusp}}
\def\area{\mathrm{area}}
\def\Rmin{R_\mathrm{min}}
\def\Rmax{R_\mathrm{max}}
\def\Int{\mathrm{Int}}
\def\Tmax{T_\mathrm{max}}
\def\Ric{\mathop{\rm Ric}\nolimits}
\def\Rm{\mathop{\rm Rm}\nolimits}
\def\inj{\mathop{\rm inj}\nolimits}
\def\im{\mathop{\rm im}\nolimits}
\def\Int{\mathop{\rm int}\nolimits}
\def\tr{\mathop{\rm tr}\nolimits}
\def\const{\mathrm{const}}
\def\length{\mathrm{length}}
\def\area{\mathrm{area}}
\def\reg{\mathrm{reg}}
\def\eucl{\mathrm{eucl}}
\newcommand{\calm}{{\mathcal{M}}}
\begin{document}

\title{Long time behaviour of Ricci flow on open 3-manifolds}

\author{Laurent Bessi\`eres \thanks{Institut de Math\'ematiques de Bordeaux, universit\'e de Bordeaux.} \and  G\'erard Besson \thanks{Institut Fourier-CNRS, 
 universit\'e de Grenoble.} \and Sylvain Maillot \thanks{Institut de Math\'ematiques et de mod\'elisation de Montpellier, universit\'e de Montpellier.}}


\maketitle

\begin{abstract}
We study the long time behaviour of the Ricci flow with bubbling-off on a possibly noncompact $3$-manifold of finite volume whose universal cover has bounded geometry. As an application, we give a Ricci flow proof of Thurston's hyperbolisation theorem for $3$-manifolds with 
toral boundary that generalises Perelman's proof of the hyperbolisation conjecture in the closed case.
\end{abstract}

\thanks{This research was partially supported by ANR project GTO ANR-12-BS01-0014.}

\section{Introduction}

A Riemannian metric is \bydef{hyperbolic} if it is complete and has constant sectional curvature equal to $-1$. If $N$ is a $3$-manifold-with-boundary, then we say it is \bydef{hyperbolic} if its interior admits a hyperbolic metric.
In the mid-1970s, W.~Thurston stated his \emph{Hyperbolisation Conjecture,} which gives a natural  sufficient condition on the topology 
of a $3$-manifold-with-boundary $N$ which implies that it is hyperbolic. Recall that $N$ is \bydef{irreducible} if every embedded $2$-sphere in $N$ 
bounds a $3$-ball. It is \bydef{atoroidal} if every incompressible embedded $2$-torus in $N$ is parallel to a component of $\bord N$ or bounds a
product neighbourhood $T^2\times [0,1)$ of an end of $N$.
A version of Thurston's conjecture states that if $N$ is compact, connected, orientable, irreducible, and $\pi_1N$ is infinite and 
does not have any subgroup isomorphic to $\Zz^2$, then $N$ is hyperbolic. If one replaces the hypotheses on the fundamental group by the 
assumption that $N$ is atoroidal then one gets the conclusion that $N$ is hyperbolic or Seifert fibred.

Thurston proved his conjecture for the case of so-called \emph{Haken manifolds}, which includes the case where $\bord N$ is nonempty. 
The  case where $N$ is closed was solved by G.~Perelman \cite{Per1,Per2} using Ricci flow with surgery, based on ideas of R.~Hamilton.

It is natural to ask whether the Hamilton-Perelman approach works when $\bord N\neq\emptyset$. The interior $M$ of $N$, on which one wishes 
to construct a hyperbolic metric, is then noncompact. This question can be divided into two parts: first, is it possible to construct some 
version of Ricci flow with surgery on such an open manifold $M$, under reasonable assumptions on the initial metric? Second, does it 
converge (modulo scaling) to a hyperbolic metric? A positive answer to both questions would give a Ricci flow proof of the full 
Hyperbolisation Conjecture logically independent of Thurston's results.

A positive answer to the first question was given in~\cite{bbm:openflow}, for initial metrics of bounded geometry, i.e. of bounded curvature 
and positive injectivity radius. If one considers irreducible manifolds, surgeries are topologically trivial: each surgery sphere bounds 
a $3$-ball. Hence a surgery splits off a $3$-sphere. In this situation we can refine the construction of the Ricci flow with surgery so that it is not 
necessary to perform the surgery topologically. We obtain a solution which is a piecewise smooth Ricci flow on a fixed manifold; at singular times, one performs only a metric surgery, changing the metric on some $3$-balls. This construction was defined in \cite{B3MP} in the case of closed 
irreducible nonspherical $3$-manifolds, and called Ricci flow with bubbling-off. One can extend it to the setting of bounded geometry. The purpose  of this paper is to answer the second question, in the situation where the initial metric has a \emph{cusp-like structure}.

\begin{defi}\label{def:cusp-like} 
We say that a metric $g$ on $M$ has a \bydef{cusp-like structure}, or is a \emph{cusp-like metric}, if $M$ has finitely many ends (possibly zero), and each end has a neighbourhood which admits a
metric $g_\cusp$ homothetic to a rank two cusp neighbourhood of a hyperbolic manifold 
such that $g-g_\cusp$ goes to zero at infinity in $C^k$-norm for all positive integers $k$.
(Thus if $M$ is closed, any metric is cusp-like.)
\end{defi}

 Note that such a metric is automatically complete with bounded curvature and of finite volume, but its injectivity radius 
equals zero  hence it does not have bounded geometry. However, except in the case where 
$M$ is homeomorphic to a solid torus, its universal covering does have bounded geometry (see Lemma~\ref{lem:bd}). Since solid tori are Seifert fibred, we will 
assume that $M$ is not homeomorphic to a solid torus when necessary. Also note that if $M$ admits a cusp-like metric, then $M$ admits a manifold compactification whose boundary is empty or a union of $2$-tori. This compactification is irreducible (resp.~atoroidal, resp.~Seifert-fibred) if and only if $M$ is irreducible (resp.~atoroidal, resp.~Seifert-fibred).

In section  \ref{sec:openflow}  we construct a Ricci flow with bubbling-off  on $M$, for any cusp-like initial metric, by passing to the universal cover and working equivariantly. For simplicity we restrict ourselves to the case where $M$ is nonspherical. This is not a problem since spherical manifolds are Seifert fibred. We also 
prove that the cusp-like structure is preserved by this flow (cf.~Theorem \ref{thm:cusp-like}).

Using this tool, we can adapt Perelman's proof of geometrisation to obtain the following result:

\begin{theo}\label{thm:geometrisation}
Let $M$ be a connected, orientable, irreducible, atoroidal $3$-manifold and $g_0$ be a metric on $M$ which is cusp-like 
at infinity.  Then $M$ is Seifert-fibred, or there exists a Ricci flow with bubbling-off $g(\cdot)$ on $M$ defined on $[0,\infty)$, such that $g(0)=g_0$, and as $t$ goes to infinity, $t^{-1}g(t)$ converges smoothly in the pointed topology for appropriate base points to some finite volume hyperbolic metric on $M$. Moreover, $g(\cdot)$ has finitely many singular times, and there are positive constants 
$T,C$ such that $|\Rm| < Ct^{-1}$ for all $t \ge T$.
\end{theo}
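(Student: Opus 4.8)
The plan is to follow Perelman's scheme for geometrisation of closed manifolds, as streamlined in the monograph literature, adapting each step to the open, finite-volume, cusp-like setting by working on the universal cover equivariantly. First I would invoke the construction of Section~\ref{sec:openflow}: passing to the universal cover $\tilde M$ (which has bounded geometry by Lemma~\ref{lem:bd}, using that $M$ is not a solid torus), one builds a Ricci flow with bubbling-off $g(\cdot)$ on $M$ with $g(0)=g_0$, defined for all $t\in[0,\infty)$, and by Theorem~\ref{thm:cusp-like} the cusp-like structure is preserved for all time. Since $M$ is irreducible and nonspherical (spherical manifolds being Seifert-fibred, which is an allowed conclusion), the surgeries are purely metric — performed on $3$-balls — so the underlying manifold is fixed. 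The key structural input one must carry over from the closed case is the a priori curvature estimate of the canonical-neighbourhood/thick-thin type: there exist $T,C>0$ with $|\Rm|<Ct^{-1}$ for $t\ge T$. In the closed case this follows from Perelman's long-time analysis; here one runs the same argument on $\tilde M$, using that the cusp ends contribute controlled, essentially hyperbolic geometry, so that no surgeries occur after some finite time and the number of singular times is finite.

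Next, with the $|\Rm|<Ct^{-1}$ bound in hand for $t\ge T$, I would carry out the thick-thin decomposition of Perelman's Section~7 applied to the rescaled metrics $t^{-1}g(t)$. The $\omega$-thick part, where the volume of balls of radius $\sqrt t$ is at least $\omega t^{3/2}$, converges (after rescaling, in the pointed $C^\infty$ topology for suitable basepoints) to a complete finite-volume hyperbolic manifold of sectional curvature $-1/4$ — rescaled to $-1$ this is the candidate hyperbolic metric on (a piece of) $M$. The monotonicity of Perelman's $\overline\lambda$-invariant (or equivalently the behaviour of the normalised volume / the minimum of scalar curvature) forces the hyperbolic limit to be nonempty unless $M$ collapses entirely. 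The thin part is then collapsed with (locally) bounded curvature in the sense of Cheeger–Fukaya–Gromov, hence carries a graph/Seifert structure on the relevant region; one must also control the cuspidal ends here, but the cusp-like structure already identifies a neighbourhood of each end with a standard rank-two cusp, which is hyperbolic and belongs to the thick part, so the ends cause no trouble.

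The heart of the argument — and the step I expect to be the main obstacle — is showing that the hyperbolic limit exhausts all of $M$, i.e. the thin part is eventually empty, so that the convergence is genuinely global and produces a hyperbolic metric on $M$ rather than merely on a submanifold. This is exactly where the topological hypotheses enter: one shows the boundary tori of the truncated hyperbolic pieces are incompressible in $M$ (Hamilton's argument via minimal surfaces / the stability inequality, adapted to the finite-volume open setting and to the compactification of $M$ by tori), and then irreducibility together with atoroidality forces any such torus to be boundary-parallel or to bound a cusp neighbourhood, while the thin part, being a graph manifold glued along incompressible tori, would contradict atoroidality unless it is empty or Seifert-fibred — in which case $M$ itself is Seifert-fibred and we are in the excluded alternative. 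Combining this with the incompressibility argument yields that $t^{-1}g(t)$ converges to a complete finite-volume hyperbolic metric on all of $M$. Finally, the finiteness of the number of singular times and the estimate $|\Rm|<Ct^{-1}$ for $t\ge T$ are recorded as byproducts of the long-time curvature control established in the first step; one checks these survive passage from $\tilde M$ back down to $M$, which is immediate since the relevant bounds are scale-invariant and local.
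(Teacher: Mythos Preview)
Your outline tracks the paper's strategy at the coarse level (equivariant construction on $\tilde M$, thick--thin decomposition, incompressibility of cuspidal tori, atoroidality), but the order of deductions is inverted in a way that hides the main technical difficulty, and two of your geometric claims are false in the noncompact setting.

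First, you treat the global bound $|\Rm|<Ct^{-1}$ as a preliminary input, saying it follows by ``running the same argument on $\tilde M$'' and that the cusp ends ``cause no trouble''. In the paper this bound is not an input but the \emph{final} and hardest step. On the thick part it comes for free from convergence to the hyperbolic limit; on the thin part $G_t$ it is obtained only \emph{after} one knows $H_t\cong M$ (via incompressibility plus atoroidality), so that each component of $G_t$ is a topological cusp $T^2\times[0,\infty)$. One then invokes Bamler's structural description of the thin part (the $V_1,V_2,V_2'$ decomposition of~\cite{bam:longtimeII}) and proves, by a minimal-area-of-annulus argument (Lemma~\ref{lem:vone} and the claim inside it), that the Seifert piece $V_2$ is eventually empty; only then does the universal-cover volume bound of \cite[Lemma~5.2]{bam:longtimeII} combine with Proposition~\ref{prop:bar rho bis} to yield $|\Rm|\le Ct^{-1}$ on $G_t$. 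Your plan contains no analogue of this step.

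Second, two statements are wrong as written. The cuspidal ends do \emph{not} belong to the thick part: a rank-two cusp has arbitrarily small volume in balls of radius $\rho$ and is thin for every $w>0$. Consequently the thin part is \emph{never} empty when $M$ is noncompact, and the convergence $t^{-1}g(t)\to g_{\hyp}$ is only pointed, not global (the paper explicitly remarks that global convergence fails in dimension~$3$). So your plan to ``show the thin part is eventually empty'' cannot succeed; what one shows instead is that the thin part is exactly the union of cusp neighbourhoods and that the curvature is controlled there by the argument above.
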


\label{rem:geometrisation}

%


If $N$ is a compact, connected, orientable $3$-manifold such that $\bord N$ is empty or a union of $2$-tori, then $M=\Int N$ always carries a 
cusp-like at infinity metric. Thus we obtain:

\begin{corol}[Thurston, Perelman]\label{corol:geometrisation}
Let $N$ be a compact, connected, orientable $3$-manifold-with-boundary such that $\bord N$ is empty or a union of $2$-tori. If $N$ is irreducible and atoroidal, then $N$ is Seifert-fibred or hyperbolic.
\end{corol}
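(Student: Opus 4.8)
\emph{Proof proposal.} The plan is to obtain Corollary~\ref{corol:geometrisation} as a direct consequence of Theorem~\ref{thm:geometrisation} applied to the open $3$-manifold $M=\Int N$. To set this up I would verify, in order, the three hypotheses of that theorem for $M$: connectedness and orientability, which are inherited from $N$ with no work; the existence of a cusp-like at infinity metric $g_0$ on $M$; and irreducibility together with atoroidality of $M$. For the second point, since $\bord N$ is empty or a union of $2$-tori, one constructs $g_0$ by fixing a collar $\bord N\times[0,1)$ of each end of $\Int N$, equipping it with (a homothetic copy of) a rank two hyperbolic cusp metric --- the warped product of a flat torus metric with a ray --- and interpolating smoothly with an arbitrary metric on the compact core; the resulting metric agrees with $g_\cusp$ up to a term decaying in every $C^k$-norm near the ends, so it is cusp-like in the sense of Definition~\ref{def:cusp-like}. (When $N$ is closed there are no ends and any metric will do.)

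For the topological hypotheses I would invoke the remark stated just before the corollary: a manifold carrying a cusp-like metric has a compactification with empty or toral boundary which is irreducible (resp.~atoroidal, resp.~Seifert-fibred) exactly when the open manifold is. Since a tame open $3$-manifold has an essentially unique such compactification, that compactification is homeomorphic to $N$; hence $N$ is irreducible and atoroidal if and only if $M=\Int N$ is, so the hypotheses of Theorem~\ref{thm:geometrisation} hold. One subtlety to keep in mind here is that ``atoroidal'' for the open manifold $M$ is read in the sense of the definition in the introduction, where an incompressible torus may also bound a product end $T^2\times[0,1)$; this clause is exactly what makes atoroidality of $M$ correspond to atoroidality of $N$ along its toral boundary.

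It then remains to run the dichotomy. Applying Theorem~\ref{thm:geometrisation} to $(M,g_0)$, either $M$ is Seifert-fibred --- whence $N$ is Seifert-fibred by the equivalence just recalled, and we are done --- or there is a Ricci flow with bubbling-off $g(\cdot)$ on $M$ with $g(0)=g_0$, defined on $[0,\infty)$, such that $t^{-1}g(t)$ converges in the pointed smooth topology to a finite volume hyperbolic metric on $M$. In the latter case $\Int N=M$ admits a hyperbolic metric, so $N$ is hyperbolic by definition, and the corollary is proved. I do not expect any genuine obstacle at this level: the entire analytic content sits in Theorem~\ref{thm:geometrisation}, and the only things that need care are the bookkeeping just described --- the passage between $N$ and its interior for irreducibility and atoroidality, and the observation that the degenerate cases in which $M$ is spherical or homeomorphic to a solid torus, which the construction of the flow in Section~\ref{sec:openflow} explicitly sets aside, fall into the Seifert-fibred alternative and are therefore already absorbed into the statement of Theorem~\ref{thm:geometrisation}.
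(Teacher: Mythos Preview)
Your proposal is correct and matches the paper's own derivation: the paper simply notes that $M=\Int N$ carries a cusp-like metric (exactly the construction you sketch), invokes the remark after Definition~\ref{def:cusp-like} that irreducibility, atoroidality, and Seifert-fibredness pass between $M$ and its compactification $N$, and then reads off the dichotomy of Theorem~\ref{thm:geometrisation}. Your write-up just makes explicit the bookkeeping that the paper leaves to the reader.
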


Note that it should be possible to obtain this corollary directly from the closed case by a doubling trick. 
The point of this paper is to study the behaviour of Ricci flow in the noncompact case.\\

Let us review some results concerning global stability or convergence to finite 
volume hyperbolic metrics. In the case of surfaces, R. Ji, L. Mazzeo and N. Sesum \cite{Ji-Maz-Ses:cusps} show that if $(M,g_0)$ is complete, 
asymptotically hyperbolic of finite area with $\chi(M) < 0$, then the normalised Ricci flow with initial condition $g_0$ 
converges  exponentially to the unique complete hyperbolic metric in its conformal class. G. Giesen and P. Topping 
\cite[Theorem 1.3]{Gie-Top:incomplete} show that if $g_0$, possibly incomplete and with unbounded curvature, is in the 
conformal class of a complete finite area hyperbolic metric $g_\hyp$, then there exists a unique Ricci flow with 
initial condition $g_0$ which is instantaneously complete and maximaly stretched (see the precise definition in \cite{Gie-Top:incomplete}), defined on $[0,+\infty)$ and 
such that the rescaled solution $(2t)^{-1}g(t)$ converges smoothly locally to $g_0$ as $t \to \infty$. Moreover, 
if $g_0 \leq C g_{\hyp}$ for some constant $C>0$ then the converence is global: 
for any $k \in \NN$ and $\mu \in (0,1)$ there exists a constant $C>0$ such that for all $t \geq 1$, 
$\vert (2t)^{-1}g(t) - g_\hyp \vert_{C^k(M,g_\hyp)} < \frac{C}{t^{1-\mu}}$. In dimensions greater than or equal to 3, R.~Bamler \cite{Bam:stability} shows that if $g_0$ is a small $C^0$-perturbation of a 
complete finite volume  hyperbolic metric $g_{\hyp}$, that is if $\vert g_0 - g_\hyp \vert_{C^0(M,g_\hyp)} <\epsi$ 
where $\epsi = \epsi(M,g_{hyp}) >0$, then the normalised Ricci flow with initial condition $g_0$ is defined 
for all time and converges in the pointed Gromov-Hausdorff topology to $g_{hyp}$. In dimension $3$ at least, there cannot 
be any global convergence result. Indeed, consider a complete finite volume hyperbolic manifold $(M^3,g_\hyp)$ with at least 
one cusp. Let $g_0$ be a small $C^0$ pertubation of $g_\hyp$ such that $g_0$ remains 
cusp-like at infinity but with a different hyperbolic structure  in the given cusp (change the cross-sectional flat 
structure on the cusp). By Bamler \cite{Bam:stability} a rescaling of $g(t)$ converges in the pointed topology to 
$g_{\hyp}$. The pointed convergence takes place on balls of radius $R$ for all $R$;  however, our stability theorem \ref{thm:stability2} implies that, out of these balls, the cusp-like structure   of 
$g_0$ is preserved for all time, hence is different from the one of the pointed limit. The convergence cannot be global.\\

The paper is organised as follows. In Section 2 we introduce the necessary definitions and we prove 
the existence of a Ricci flow with bubbling-off which preserves cusp-like structures. Section 3 is devoted to a thick-thin decomposition theorem  which shows that the thick part of 
$(M,t^{-1}g(t))$ (sub)-converges to a complete finite volume hyperbolic manifold. We give also some estimates on the long time behaviour of our solutions. 
In Section 4 we prove the incompressibility of the tori bounding the thick part. Section 5 is devoted to a collapsing theorem, which is used to 
show that the thin part is a graph manifold. 
Finally the main theorem \ref{thm:geometrisation} is proved in Section 6. To obtain the curvature estimates on the thin part, we follow 
\cite{Bam:longtimeI}. An overview of the proof is given at the beginning of that section.\\

Throughout this paper, we will use the following convention: \emph{all $3$-manifolds  are connected and orientable.}

Finally, we acknowledge the support of the Agence Nationale de la Recherche through Grant ANR-12-BS01-0004.

\section{Ricci flow with bubbling-off on open manifolds}\label{sec:openflow}

\subsection{Definition and existence}

In this section we define Ricci flow with bubbling-off and state the main existence theorem.

For convenience of the reader we recall here the most important definitions involved, and refer 
to Chapters~2, 4, and~5 of the monograph~\cite{B3MP} for completeness.  

\begin{defi}[Evolving metric]
Let $M$ be an $n$-manifold and $I\subset\Rr$ be an interval. An \bydef{evolving metric} on $M$ defined on $I$ is 
a map $t\mapsto g(t)$ from $I$ to the space of smooth Riemannian metrics on $M$. A \bydef{regular} time is a 
value of $t$ such that this map is $C^1$-smooth in a neighbourhood of $t$. If $t$ is not regular, 
then it is \bydef{singular}. We denote by $g_+(t)$ the right limit of $g$ at $t$, when it exists. 
An evolving metric is \bydef{piecewise $C^1$} if singular times form a discrete subset of $\RR$ and if $t \mapsto g(t)$ 
is left continuous and has a right limit at each point. A subset $N \times J \subset M\times I$ is 
\emph{unscathed} if $t \to g(t)$ is smooth there. Otherwise it is \emph{scathed}.
\end{defi}

If $g$ is a Riemannian metric, we denote by $\Rmin(g)$ (resp.~$\Rmax(g)$) the infimum (resp.~the supremum) of the scalar curvature of $g$. 
For any $x \in M$, we denote by $\Rm(x) : \Lambda^2 T_xM \to \Lambda^2 T_xM$ the \emph{curvature operator} defined by 
$$\langle \Rm(X \wedge Y),Z \wedge T \rangle = \Riem(X,Y,Z,T),$$
where $\Riem$ is the Riemann curvature tensor and $\wedge$ and $\langle \cdot, \cdot \rangle$ are normalised so that 
 $\{e_i \wedge e_j \mid i<j\}$ is an orthonormal basis if $\{e_i\}$ is. In particular, if $\lambda \geq \mu \geq \nu$ are the eigenvalues 
of $\Rm$, then $\lambda$ (resp. $\nu$) is the maximal (resp. minimal) sectional curvature  and $R=2(\lambda+\mu+\nu)$. \footnote{This convention is different from that used by Hamilton and other authors.}

\begin{defi}[Ricci flow with bubbling-off]
A piecewise $C^1$ evolving metric $t\mapsto g(t)$ on $M$ defined on $I$ is a \bydef{Ricci flow with bubbling-off} if
\begin{enumerate}[(i)]
\item The Ricci flow equation ${\partial g\over\partial t} = -2\Ric$ is satisfied at all regular times;
\item for every singular time $t\in I$ we have
     \begin{enumerate}[(a)]
        \item $\Rmin(g_+(t)) \geqslant \Rmin(g(t))$, and
      \item $g_+(t) \leqslant g(t)$.
     \end{enumerate}
\end{enumerate}
\end{defi}

\begin{rem}\label{rem:finite volume}
If $g(\cdot)$ is a complete Ricci flow with bubbling-off of bounded sectional curvature defined on an interval of type $[0,T]$ or $[0,\infty)$, and if $g(0)$ has finite volume, then $g(t)$ has finite volume for every $t$.
\end{rem}

A \bydef{parabolic neighbourhood} of a point $(x,t)\in M\times I$ is a set of the form
$$P(x,t,r,-\Delta t) = \{ (x',t') \in M\times I \mid x'\in B(x,t,r), t'\in [t-\Delta t, t] \}.$$

\begin{defi}[$\kappa$-noncollapsing]
For $\kappa,r>0$ we say that $g(\cdot)$ is \bydef{$\kappa$-collapsed} at $(x,t)$ on the scale $r$ 
if for all $(x',t')$ in the parabolic neighbourhood $P(x,t,r,-r^2)$ we have $|\Rm (x',t')| \le r^{-2}$ and $\vol(B(x,t,r))<\kappa r^n$. 
Otherwise, $g(\cdot)$ is \bydef{$\kappa$-noncollapsed} at $(x,t)$ on the scale $r$.  If this is true for 
all $(x,t)\in M\times I$, then we say that $g(\cdot)$ is \bydef{$\kappa$-noncollapsed on the scale $r$}.
\end{defi}

Next is the definition of \emph{canonical neighbourhoods}. 
From now on and until the end of this section,  $M$ is a $3$-manifold and $\epsi,C$ are positive numbers.

\begin{defi}[$\epsi$-closeness, $\epsi$-homothety]
If $U\subset M$ is an open subset and $g, g_0$ are two Riemannian metrics on $U$ we say that $g$ is $\epsi$-close to $g_0$ on $U$ 
if $$ ||g-g_0||_{[\epsi^{-1},U,g_0]} < \epsi,$$
where the norm is defined on page 26 of~\cite{B3MP}. We say that $g$ is $\epsi$-homothetic to $g_0$ on $U$ if there exists 
$\lambda>0$ such that $\lambda g$ is $\epsi$-close to $g_0$ on $U$. 
A pointed Riemannian manifold $(U,g,x)$ is $\epsi$-close to another Riemannian manifold $(U_0,g_0,x_0)$ if there exists a 
$C^{[\epsi^{-1}]+1}$-diffeomorphism $\psi$ from $U_0$ to $U$ sending $x_0$ to $x$ and such that the pullback metric 
$\psi^\ast(g)$ is $\epsi$-close to $g_0$ on $U$. We say that $(U,g,x)$ is $\epsi$-homothetic to $(U_0,g_0,x_0)$ if there exists 
$\lambda>0$ such that $(U,\lambda g,x)$ is $\epsi$-close to $(U_0,g_0,x_0)$. 
\end{defi}

\begin{defi}[$\epsi$-necks, $\epsi$-caps]
Let $g$ be a Riemannian metric on $M$. If $x$ is a point of $M$, then an open subset  
$U\subset M$ is an \bydef{$\epsi$-neck centred at $x$} if $(U,g,x)$ is $\epsi$-homothetic to 
$(S^2\times (-\epsi^{-1},\epsi^{-1}),g_\mathrm{cyl},(*,0))$, where $g_\mathrm{cyl}$ is the 
standard metric with unit scalar curvature. An open set $U$ is an \bydef{$\epsi$-cap centred at $x$} if $U$ 
is the union of two sets $V,W$ such that $x\in\Int V$, $V$ is a closed $3$-ball, $\bar W\cap V=\bord V$, and 
$W$ is an $\epsi$-neck.
\end{defi}

\begin{defi}[$(\epsi,C)$-cap] \label{def:epsi-C cap}
 An open subset $U\subset M$ is an \bydef{$(\epsi,C)$-cap centred at $x$} if $U$ is an $\epsi$-cap centred at $x$ and satisfies the following estimates:
 $R(x)>0$ and there exists $r\in (C^{-1} R(x)^{-1/2},C R(x)^{-1/2})$ such that
\begin{enumerate}[(i)]
\item $\overline{B(x,r)} \subset U \subset B(x,2r)$; 
\item The scalar curvature function restricted to $U$ has values in a compact
subinterval of $(C^{-1} R(x),C  R(x))$;
\item $\vol(U) > C^{-1}R(x)^{-3/2}$ and if $B(y,s) \subset U$ satisfies $|\Rm| \leqslant s^{-2}$ on $B(y,s)$ then
$$ C^{-1} < \frac{\vol B(y,s)}{s^3} ~; $$
\item On $U$, 
$$ |\nabla R|< C R^{\frac32}~, $$
\item On $U$, 
\begin{equation}
|\Delta R + 2|\Ric|^2|< C R^2 ~, \label{eq:Delta R}
\end{equation}
\item On $U$, 
$$ |\nabla \Rm|< C |\Rm|^{\frac32}~,$$
\end{enumerate}
\end{defi}

\begin{rem}\label{rem:partial l} If $t \mapsto g(t)$ is a Ricci flow, then 
$\frac{\partial R}{\partial t} = \Delta R + 2|\Ric|^2$ hence equation \eqref{eq:Delta R} implies 
that $|\dfrac{\partial R}{\partial t}| \leq C R^2$.
 \end{rem}

\begin{defi}[Strong $\epsi$-neck]
We call \bydef{cylindrical flow} the pointed evolving manifold $(S^2\times\Rr,\{g_\mathrm{cyl}(t)\}_{t\in (-\infty,0]})$, where $g_\mathrm{cyl}(\cdot)$ is 
the product Ricci flow with round first factor, normalised so that the scalar curvature at time $0$ is $1$. If 
$g(\cdot)$ is an evolving metric on $M$, and $(x_0,t_0)$ is a point in spacetime, then an open subset $N\subset M$ is a \bydef{strong $\epsi$-neck 
centred at $(x_0,t_0)$} if there exists $Q>0$ such that $(N,\{g(t)\}_{t\in [t_0-Q^{-1},t_0]},x_0)$ is unscathed, and, denoting $\bar g(t)=Qg(t_0+tQ^{-1})$ the  
parabolic rescaling with factor $Q>0$ at time $t_0$, $(N,\{\bar g(t)\}_{t\in [-1,0]},x_0)$ is $\epsi$-close to $(S^2\times (-\epsi^{-1},\epsi^{-1}),\{g_\mathrm{cyl}(t)\}_{t\in [-1,0]},*)$.
\end{defi}

\begin{rem}
A strong $\epsi$-neck satisfies the estimates (i)--(vi) of Definition 
\ref{def:epsi-C cap} for an appropriate constant $C=C(\epsi)$, at all times, that is on all $N \times [t_0-Q^{-1},t_0]$ for any $Q>0$ as above.
\end{rem}

\begin{defi}[$(\epsi,C)$-canonical neighbourhood] \label{def:VC}
Let  $\{g(t)\}_{t \in I})$ be an evolving metric on $M$. We say that a point $(x,t)$ admits (or is centre of) an
\bydef{$(\epsi,C)$-canonical neighbourhood} if $x$ is centre of an $(\epsi,C)$-cap in $(M,g(t))$ or if $(x,t)$ is centre of a strong $\epsi$-neck $N$ which satisfies (i)--(vi) at all times.  
\end{defi}

In~\cite[Section 5.1]{B3MP} we fix constants $\epsi_0,C_0$. For technical reasons, we need to take them slightly different here; this will be explained in the proof of 
Theorem~\ref{thm:existence}.

\begin{defi}[Canonical Neighbourhood Property $(CN)_r$]\label{def:CN}
Let $r>0$. An evolving metric satisfies the property $(CN)_r$ if, for any $(x,t)$, if $R(x,t) \geq r^{-2}$ then $(x,t)$ is centre 
of an $(\epsi_0,C_0)$-canonical neighbourhood. 
\end{defi}

Next we define a pinching property for the curvature tensor coming from work of 
Hamilton and Ivey \cite{hamilton:nonsingular,Ive}. We consider a familly of positive functions $(\phi_{t})_{t\geqslant 0}$ defined as follows. Set  $s_{t}:=\frac{e^2}{1+t}$ and define 
$$\phi_{t} :[-2s_{t},+\infty) \longrightarrow [s_{t},+\infty)$$
\index{$\phi_{t}$}%
as the reciprocal of the increasing function $$s \mapsto 2s(\ln(s) + \ln(1+t) -3).$$
Compared with the expression used in \cite{hamilton:nonsingular,Ive}, there is an extra factor $2$ here. This comes from our curvature conventions. 
A key property of this function is that $\frac{\phi_{t}(s)}{s} \to 0$ as $s \to +\infty$. 
\begin{defi}[Curvature pinched toward positive]\label{def:pinching}
Let $I\subset [0,\infty)$ be an interval and $\{g(t)\}_{t\in I}$ be an evolving metric on  $M$. We say that $g(\cdot)$ has \bydef{curvature
pinched toward positive at time $t$}  if for all $x \in M$ we have
\begin{gather}
R(x,t)  \geqslant -\frac{6}{4t+1}, \label{eq:pinching 1} \\
\Rm (x,t) \geqslant -\phi_{t}(R(x,t)). \label{eq:pinching 2}
\end{gather}
We say that $g(\cdot)$ has  \bydef{curvature
pinched toward positive} if it has curvature
pinched toward positive at each $t \in I$. 
\end{defi}

This allows in particular to define the notion of \emph{surgery parameters} $r,\delta$ (cf.~\cite[Definition 5.2.5]{B3MP}). 
Using~\cite[Theorem~5.2.4]{B3MP} we also define their \emph{associated cutoff parameters} $h,\Theta$. Using the 
metric surgery theorem, we define the concept of a metric $g_+$ being \emph{obtained from $g(\cdot)$ by $(r,\delta)$-surgery at time $t_0$} (cf.~\cite[Definition 5.2.7]{B3MP}). This permits to define the following central notion:

\begin{defi}[Ricci flow with $(r,\delta)$-bubbling-off]\label{defi:r delta solution}
Fix surgery parameters $r,\delta$ and let $h,\Theta$ be
the associated cutoff parameters. Let $I\subset [0,\infty)$ be an interval and $\{g(t)\}_{t\in I}$ be a
 Ricci flow with bubbling-off on $M$. We say that $\{g(t)\}_{t\in I}$ is a 
\bydef{Ricci flow with $(r,\delta)$-bubbling-off} if it has the following properties:
\begin{enumerate}[(i)]
\item $g(\cdot)$ has curvature pinched toward positive and satisfies $R(x,t)\leqslant \Theta$ for all $(x,t)\in M\times I$;
\item For every singular time $t_0\in I$, the metric $g_+(t_0)$ is obtained 
from $g(\cdot)$ by $(r,\delta)$-surgery at time $t_0$; 
\item  $g(\cdot)$ satisfies property $(CN)_r$. 
\end{enumerate}
\end{defi}

\begin{defi}[Ricci flow with $(r,\delta,\kappa)$-bubbling-off]
Let $\kappa>0$. A Ricci flow with $(r,\delta)$-bubbling-off $g(\cdot)$  
is called a \bydef{Ricci flow with $(r,\delta,\kappa)$-bubbling-off} if it is $\kappa$-noncollapsed on all scales less than or equal to 1.
\end{defi}

\begin{defi}\label{def:normalised}
A metric $g$ on a $3$-manifold $M$ is \bydef{normalised} if it satisfies $\operatorname{tr} \Rm^2\le 1$ and each ball of 
radius $1$ has volume at least half of the volume of the unit ball in Euclidean $3$-space.
\end{defi}

Note that a normalised metric always has bounded geometry.
At last we can state our existence theorem:
\begin{theo}\label{thm:existence}
There exist decreasing sequences of positive numbers $r_k,\kappa_k>0$ and, for every continuous positive function $t\mapsto \bar \delta(t)$, 
a decreasing sequence of positive numbers $\delta_k$ with $\delta_k \leqslant \bar \delta(\cdot)$ on $]k,k+1]$ with the following property.
For any complete, normalised, nonspherical, irreducible Riemannian $3$-manifold $(M,g_0)$, one of the following conclusions holds:
\begin{enumerate}[(i)]
\item There exists $T>0$ and a complete Ricci flow with bubbling-off $g(\cdot)$ of bounded geometry on $M$, defined on $[0,T]$, with $g(0)=g_0$, and such that every point of $(M,g(T))$ is centre of an $\epsi_0$-neck or an $\epsi_0$-cap, or
\item \label{item:stop} There exists a complete Ricci flow with bubbling-off $g(\cdot)$ of bounded geometry on $M$, defined on 
$[0,+\infty)$, with $g(0)=g_0$, and such that for every nonnegative integer $k$, the restriction of $g(\cdot)$ to 
$]k,k+1]$ is a Ricci flow with $(r_k,\delta_k,\kappa_k)$-bubbling-off.
\end{enumerate}
\end{theo}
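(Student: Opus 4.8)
The plan is to reduce this existence theorem to the corresponding result in the closed case, namely the Ricci flow with bubbling-off existence theorem of \cite{B3MP}, by passing to the universal cover and working equivariantly. First I would observe that, since $M$ is nonspherical and irreducible, the hypotheses of \cite{B3MP} are matched; the only issue is noncompactness. By the discussion following Definition~\ref{def:cusp-like} (and Lemma~\ref{lem:bd}, provided $M$ is not a solid torus, which we may assume), the universal cover $\widetilde M$ with the lifted metric $\widetilde g_0$ has bounded geometry, and it carries a cocompact (or at least finite-covolume, properly discontinuous) action of $\pi_1 M$ by isometries. So the first step is to run the bounded-geometry version of the Hamilton--Perelman construction on $(\widetilde M,\widetilde g_0)$: one needs a priori estimates (pinching toward positive, à la Hamilton--Ivey, which is local and hence survives on the cover), short-time existence of smooth Ricci flow on a complete manifold of bounded curvature (Shi), and the surgery/metric-surgery process performed only where $R$ reaches the threshold $\Theta$. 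Because the initial metric and all the structure ($\Rm$ bound, the function $\phi_t$, the constants $\epsi_0,C_0$, the surgery parameters) are invariant under the deck group, the whole construction can be carried out $\pi_1 M$-equivariantly: every choice (which necks to cut, the model caps glued in) is made invariantly, so the resulting evolving metric on $\widetilde M$ is $\pi_1 M$-invariant and descends to a Ricci flow with bubbling-off on $M$ itself.

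\textbf{Key steps, in order.} (1) Set up the equivariant framework: lift $g_0$ to $\widetilde g_0$, record that it is complete with bounded geometry and $\pi_1M$-invariant, and fix the constants $\epsi_0,C_0$ (slightly adjusted from \cite{B3MP} as promised in the statement — this adjustment is presumably to make the canonical-neighbourhood and metric-surgery arguments compatible with the noncompact, equivariant setting, e.g. ensuring caps can be chosen invariantly and that the $\kappa$-noncollapsing constants are uniform). (2) Establish that the pinching assumption \eqref{eq:pinching 1}--\eqref{eq:pinching 2} is preserved: this is a maximum-principle argument which, on a complete manifold of bounded curvature, goes through using the Ecker--Huisken / Shi-type barrier arguments, and is manifestly deck-invariant. (3) Prove the equivariant canonical-neighbourhood theorem and equivariant $\kappa$-noncollapsing on scales $\le 1$: here one either reproves Perelman's arguments on the cover (the no-local-collapsing argument via $\call$-geometry is local in nature and the reduced volume monotonicity works on a complete bounded-curvature flow), or — more efficiently — quotients back down and invokes the compact-case statements, since all the relevant quantities only see bounded balls. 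This yields the parameters $r_k,\kappa_k$ and, given $\bar\delta$, the $\delta_k$. (4) Run the surgery iteration: on each interval $]k,k+1]$ perform $(r_k,\delta_k)$-surgery equivariantly whenever $R$ hits $\Theta$; since surgeries on an irreducible manifold are topologically trivial (each surgery sphere bounds a ball, so one only changes the metric on $3$-balls — "bubbling-off" rather than genuine surgery), the underlying manifold $\widetilde M$, and hence $M$, is unchanged. (5) Finally, the dichotomy: either the flow becomes extinct in finite time in the sense that at some $T$ every point has a canonical neighbourhood (case (i)), or it continues for all time as a Ricci flow with $(r_k,\delta_k,\kappa_k)$-bubbling-off (case (ii)). Completeness and bounded geometry at each time is preserved because surgery is done on balls and the smooth flow between surgeries preserves these properties (bounded curvature by the $\Theta$-bound, positive injectivity radius by $\kappa$-noncollapsing); finite volume is preserved by Remark~\ref{rem:finite volume}.

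\textbf{Main obstacle.} The genuinely delicate point is not the equivariance bookkeeping but making the surgery process work uniformly and equivariantly on a noncompact manifold: in the compact case one cuts along \emph{finitely many} necks at each singular time, whereas on $\widetilde M$ there may be infinitely many (a whole $\pi_1M$-orbit's worth), so one must verify that the cutoff parameters $h,\Theta$ and the curvature bounds controlling the surgery region are uniform over the orbit, that the surgeries are performed simultaneously on disjoint regions, and that only finitely many singular times accumulate in any finite interval — all of which ultimately follow from the deck-invariance together with the compact quotient, but require care. A secondary subtlety is the precise choice of the modified constants $\epsi_0,C_0$: one must check that the metric surgery theorem of \cite[Theorem~5.2.4]{B3MP} and the canonical-neighbourhood structure still hold with these values and in the equivariant/bounded-geometry setting. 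I expect the bulk of the work to consist in carefully tracing through \cite{B3MP} to confirm that every estimate there is local or depends only on bounded-geometry data, so that it transplants to $\widetilde M$ and descends to $M$.
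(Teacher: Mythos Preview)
Your overall strategy misreads both the hypotheses of the theorem and the logical architecture of the paper. Theorem~\ref{thm:existence} already assumes that $(M,g_0)$ is \emph{normalised}, hence has bounded geometry; there is nothing to gain by passing to the universal cover, and in any case $\widetilde M$ is still noncompact, so you cannot reduce to the closed-manifold results of \cite{B3MP} as you propose. The passage to the universal cover (via Lemma~\ref{lem:bd}) is used \emph{later}, in Section~6, to apply Theorem~\ref{thm:existence} to a cusp-like metric whose injectivity radius vanishes; it is not part of the proof of Theorem~\ref{thm:existence} itself. The equivariance issues you emphasise are the content of Addendum~\ref{add:existence}, not of the theorem.

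The paper's actual route is to work directly on the noncompact bounded-geometry manifold $M$ and to invoke the machinery of~\cite{bbm:openflow} (surgical solutions on open manifolds), which already handles short-time existence, pinching, $\kappa$-noncollapsing, canonical neighbourhoods and locally finite surgery in that setting. Theorem~\ref{thm:existence} then follows by iterating a finite-interval result (Theorem~\ref{thm:a iterer}), which in turn rests on analogues of Propositions~A, B, C of~\cite{bbm:openflow}. The genuinely new content, which your proposal does not address, is the modification of Proposition~A needed to produce the alternative conclusion~(i): using irreducibility and nonsphericity, each cutoff-neck sphere bounds a unique $3$-ball, and one analyses whether the high-curvature region is trapped inside maximal such balls (in which case metric surgery proceeds) or not (in which case every point is centre of an $\epsi_0$-neck or $\epsi_0$-cap). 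The adjustment of $\epsi_0,C_0$ is not for the noncompact setting but for the Addendum: one arranges $(CN)$ at scale $\epsi_0/2$ upstairs so that truncated canonical neighbourhoods project injectively to the quotient, via the equivariant foliation of Dinkelbach--Leeb.
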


\begin{defi}[Ricci flow with $(r(\cdot),\delta(\cdot))$-bubbling-off]\label{def:rdelta}
 We fix forever a function $r(\cdot)$ such that $r(t)=r_k$ on each interval $]k,k+1]$. Given $\delta(\cdot)$ satisfying 
$\delta(t)=\delta_k$ on all $]k,k+1]$, we call a solution as above a \emph{Ricci flow with $(r(\cdot),\delta(\cdot))$-bubbling-off}. 
We define similarly $h(\cdot)$ and $\Theta(\cdot)$ their associated cutoff parameters.
\end{defi}

\begin{addendum}[Ricci flow with bubbling-off on the quotient]\label{add:existence}
With the same notation as in Theorem~\ref{thm:existence} and under the same hypotheses, if in addition $(M,g_0)$ is a Riemannian 
cover of some Riemannian manifold $(X,\bar g_0)$, then in either case there exists a Ricci flow with bubbling-off $\bar g(\cdot)$ on $X$ such that for each $t$, $(M,g(t))$ is a Riemannian cover of $(X,\bar g(t))$, and in Case~(ii), the restriction of $\bar g(\cdot)$ to $]k,k+1]$ is a Ricci flow with $(r_k,\delta_k)$-bubbling-off for every $k$.
\end{addendum}

The only differences between Theorem \ref{thm:existence} and Theorem 11.5 of~\cite{bbm:openflow} is that $M$ is assumed to be irreducible, that `surgical solution' is replaced with `Ricci flow with bubbling-off', and that there is the alternative conclusion (i).

Theorem~\ref{thm:existence}  follows from iteration of the following result, which is analogous to~\cite[Theorem 5.6]{bbm:openflow}:

\begin{theo}\label{thm:a iterer}
For every $Q_0,\rho_0$ and all $0\le T_A<T_\Omega<+\infty$, there exist $r,\kappa>0$ and for all $\bar \delta>0$ there exists 
$\delta \in (0,\bar \delta)$ 
with the following property. For any complete, nonspherical, irreducible Riemannian $3$-manifold $(M,g_0)$ which satisfies $|\Rm| \le Q_0$, has injectivity radius at least $\rho_0$, has curvature pinched toward positive at time $T_A$, one of the following conclusions holds:
\begin{enumerate}[(i)]
\item There exists $T\in (T_A,T_\Omega)$ and a Ricci flow with bubbling-off $g(\cdot)$ on $M$, defined on $[T_A,T]$, with $g(T_A)=g_0$, and such that every point of $(M,g(T))$ is centre of an $\epsi_0$-neck or an $\epsi_0$-cap, or
\item There exists a Ricci flow with $(r,\delta,\kappa)$-bubbling-off $g(\cdot)$ on $M$, defined on $[T_A,T_\Omega]$, satisfying $g(T_A)=g_0$.
\end{enumerate}
\end{theo}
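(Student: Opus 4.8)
The plan is to adapt the proof of Perelman's long-time existence theorem for Ricci flow with surgery (as presented in~\cite{B3MP}, Chapters~6--11) to the noncompact bounded-geometry setting, following the scheme of~\cite[Theorem 5.6]{bbm:openflow} but replacing surgical solutions by Ricci flow with bubbling-off. First I would recall that the construction proceeds by a bootstrap on three interlocking statements, each valid on the fixed time interval $[T_A,T_\Omega]$: an \emph{a priori curvature pinching} estimate (which is automatic once it holds at $T_A$, by the maximum principle argument of Hamilton--Ivey adapted to the bubbling-off setting since metric surgery on $3$-balls only improves curvature), a \emph{canonical neighbourhood} statement $(CN)_r$, and a \emph{$\kappa$-noncollapsing} statement on scales $\le 1$. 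The parameters $r,\kappa,\delta$ are chosen in the usual order: one fixes $r$ and $\kappa$ depending on $Q_0,\rho_0,T_A,T_\Omega$ and the pinching data, and then, given $\bar\delta$, one chooses $\delta\in(0,\bar\delta)$ small enough that the metric surgery process controlled by the associated cutoff parameters $h,\Theta$ can be carried out while preserving all three properties.

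Next I would run the flow from $g_0$ at time $T_A$. By short-time existence for complete bounded-curvature metrics (Shi), the flow exists on a maximal interval; at the first time where curvature blows up or $(CN)_r$ would fail, one checks whether a metric surgery (bubbling-off on $3$-balls, legitimate here because $M$ is irreducible so every surgery sphere bounds a ball) can be performed to continue. The two alternatives in the conclusion arise exactly as in the closed case: either at some $T<T_\Omega$ the whole manifold is covered by $\epsi_0$-necks and $\epsi_0$-caps — the ``extinction-type'' outcome, giving conclusion~(i) — or surgeries can be performed as needed and the solution is extended all the way to $T_\Omega$ as a Ricci flow with $(r,\delta,\kappa)$-bubbling-off, giving conclusion~(ii). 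The key analytic inputs are: the canonical neighbourhood theorem via a contradiction/compactness argument using $\kappa$-solutions as blow-up limits; the $\kappa$-noncollapsing theorem via Perelman's $\call$-geometry and the no-local-collapsing argument, which in the bubbling-off setting needs the cutoff estimates to control the effect of past surgeries on the reduced volume; and the curvature-time derivative control from Remark~\ref{rem:partial l}.

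The principal obstacle — and the reason this is not a verbatim copy of the closed-manifold proof — is noncompactness: compactness arguments for sequences of pointed rescaled flows, used to extract $\kappa$-solutions and to prove $(CN)_r$, now require a uniform lower injectivity-radius bound at the basepoints and a uniform curvature bound on large parabolic balls, which must be supplied by the $\kappa$-noncollapsing hypothesis together with the bounded-geometry assumption $|\Rm|\le Q_0$, $\inj\ge\rho_0$ at $T_A$. One must ensure that all estimates are \emph{local in space} and that the constants depend only on $Q_0,\rho_0,T_A,T_\Omega$ and not on any global geometric quantity, so that Hamilton's compactness theorem still applies to pointed limits; this is exactly the technical content already handled in~\cite{bbm:openflow} for surgical solutions, so the main work is to verify that replacing topological surgery by metric surgery (bubbling-off) does not disturb these arguments — it does not, because bubbling-off changes the metric only on $3$-balls and only decreases it (condition~(b) in the definition of Ricci flow with bubbling-off) while not decreasing $\Rmin$ (condition~(a)), which is all that the maximum principle and $\call$-length comparison arguments use. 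A secondary technical point is the need to slightly adjust the canonical-neighbourhood constants $\epsi_0,C_0$ relative to~\cite{B3MP}, as flagged in the remark preceding Theorem~\ref{thm:existence}; I would carry the modified constants through the compactness argument and check that the structure of $\kappa$-solutions (Perelman's classification) is unaffected. Assembling these pieces, iteration over the unit intervals $]k,k+1]$ with $\delta_k\le\bar\delta$ then yields Theorem~\ref{thm:existence}, and the equivariant version (Addendum~\ref{add:existence}) follows since all constructions — short-time flow, blow-up limits, metric surgery on balls — can be performed equivariantly on a cover and descend to the quotient.
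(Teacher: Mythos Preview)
Your proposal is correct and takes essentially the same approach as the paper: both invoke the three-proposition scheme of~\cite[Propositions A, B, C]{bbm:openflow}, replacing topological surgery by the bubbling-off procedure of~\cite{B3MP} and using irreducibility to make every surgery sphere bound a ball. The only substantive content the paper adds beyond citing these references is the explicit case analysis in the modified Proposition~A explaining precisely when conclusion~(i) is triggered---the partition of $M$ into curvature zones $\mathcal{G},\mathcal{O},\mathcal{R}$, the separating cutoff $\delta$-necks, and the dichotomy that either the bounded $3$-balls organise into a maximal family on which metric surgery can be performed, or every point is already an $\epsi_0$-cap centre---which you correctly identify but gloss as the ``extinction-type outcome''.
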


The proof of Theorem~\ref{thm:a iterer} is the same as~\cite[Theorem 5.6]{bbm:openflow}. It follows from three propositions, which we do not write here, analogous to Propositions A, B, C of~\cite{bbm:openflow} (see the propositions page 949). 
The only notable difference is that we have to modify Proposition A to add the alternative conclusion that in $(M,g(b))$, every point is centre 
of an $\epsi_0$-cap or an $\epsi_0$-neck. Let us explain the proof of this adapted proposition A (see ~\cite{bbm:openflow} pages 959-961). It uses the surgical procedure of the monograph~\cite{B3MP} rather than that of~\cite{bbm:openflow}. If the curvature is large everywhere, that is if $R \geq 2r^{-2}$ on 
$(M,g(b))$ where $r$ is the surgery parameter, then by property $(CN)_r$ (Definitions 2.10 and 2.12 (iii)) every point has a canonical neighbourhood, so the alternative conclusion holds. Otherwise, we partition $M$ in three sets of small, large or very large curvature. Precisely, as in \cite[page 89]{B3MP}, we define $\mathcal{G}$ (resp.~$\mathcal{O}$, resp.~$\mathcal{R}$) as the set of points of $M$ of scalar curvature less than $2 r^{-2}$, (resp.~$\in [2r^{-2}, \Theta /2)$, resp.~$\geqslant \Theta /2$). By the assumption that 
$\Rmin(b) < 2r^{-2}$ and $\Rmax(b)=\Theta$, these sets are nonempty. One can find a locally finite collection of cutoff $\delta$-necks $\{N_i\}$ in $\mathcal{O}$ which separates $\mathcal{G}$  
from $\mathcal{R}$, in the sense that any connected component of $M \setminus \{N_i\}$ is contained in $\mathcal{G} \cup \mathcal{O}$  or in 
$\mathcal{O} \cup \mathcal{R}$. Since $M$ is irreducible and not homeomorphic to $S^3$, the middle sphere of each $N_i$ bounds a unique topological $3$-ball $B_i$. Then one of the following cases occurs:

\paragraph{Case 1} Each $B_i$ is contained in a unique maximal $3$-ball $B_j$.
  
If $\mathcal{O}$ is contained in the union of maximal $B_j$'s, we can perform the surgical procedure using the Metric surgery theorem 5.2.2 of~\cite{B3MP} on each maximal cap $B_j$, yielding a metric which has the desired properties. Otherwise 
one can see that each point of $M$ is centre of $\epsi$-cap. Hence the alternative conclusion holds.  

\paragraph{Case 2} $M$ is the union of the $B_i$'s.

Then each point is separated from infinity by a cutoff neck, so each point is centre of a cap. Hence the alternative conclusion holds.\\

Finally, we need to explain how the addendum is proved. We already remarked in~\cite{bbm:openflow} Section 11 that the construction can be made equivariant with respect to a properly discontinuous group action, by work of Dinkelbach and Leeb~\cite{dl:equi}. The only thing to check is that we still have the Canonical Neighbourhood Property for the quotient evolving metric $\bar g(\cdot)$. This is not obvious, since the projection map $p:M\to X$ might not be injective when restricted to a canonical neighbourhood.

We use a classical trick: by adjusting the constants, we may assume that $g(\cdot)$ has the stronger property that each point $(x,t)$ such 
that $R(x,t)\ge r^{-2}$ has an $(\epsi_0/2,C_0)$-canonical neighbourhood. Take now $(x,t)\in X\times I$ such that $R(x,t)\ge r^{-2}$. 
Choose $\bar x\in M$ such that $p(\bar x)=x$. Then $R(\bar x,t)=R(x,t)\ge r^{-2}$, so $(\bar x,t)$ has an $(\epsi_0/2,C_0)$-canonical 
neighbourhood $U$. By truncation, it also has an $(\epsi_0,C_0)$-canonical neighbourhood $U'$ contained in $U$ (see figure below) : 

\begin{center}
 \begin{picture}(0,0)%
\includegraphics{truncation.pstex}%
\end{picture}%
\setlength{\unitlength}{2072sp}%
\begingroup\makeatletter\ifx\SetFigFont\undefined%
\gdef\SetFigFont#1#2#3#4#5{%
  \reset@font\fontsize{#1}{#2pt}%
  \fontfamily{#3}\fontseries{#4}\fontshape{#5}%
  \selectfont}%
\fi\endgroup%
\begin{picture}(11876,5176)(616,-5681)
\put(10486,-4831){\makebox(0,0)[lb]{\smash{{\SetFigFont{9}{10.8}{\familydefault}{\mddefault}{\updefault}$V$}}}}
\put(6481,-4426){\makebox(0,0)[lb]{\smash{{\SetFigFont{9}{10.8}{\familydefault}{\mddefault}{\updefault}$W'$}}}}
\put(1981,-2356){\makebox(0,0)[lb]{\smash{{\SetFigFont{9}{10.8}{\familydefault}{\mddefault}{\updefault}$U$}}}}
\put(4771,-5281){\makebox(0,0)[lb]{\smash{{\SetFigFont{9}{10.8}{\familydefault}{\mddefault}{\updefault}$W$}}}}
\put(3691,-826){\makebox(0,0)[lb]{\smash{{\SetFigFont{9}{10.8}{\familydefault}{\mddefault}{\updefault}$0$}}}}
\put(631,-736){\makebox(0,0)[lb]{\smash{{\SetFigFont{9}{10.8}{\familydefault}{\mddefault}{\updefault}$-2\epsi^{-1}$}}}}
\put(6211,-736){\makebox(0,0)[lb]{\smash{{\SetFigFont{9}{10.8}{\familydefault}{\mddefault}{\updefault}$2\epsi^{-1}$}}}}
\put(2116,-3751){\makebox(0,0)[lb]{\smash{{\SetFigFont{9}{10.8}{\familydefault}{\mddefault}{\updefault}$-2\epsi^{-1}$}}}}
\put(7471,-3661){\makebox(0,0)[lb]{\smash{{\SetFigFont{9}{10.8}{\familydefault}{\mddefault}{\updefault}$2\epsi^{-1}$}}}}
\put(5266,-3706){\makebox(0,0)[lb]{\smash{{\SetFigFont{9}{10.8}{\familydefault}{\mddefault}{\updefault}$0$}}}}
\put(4186,-1546){\makebox(0,0)[lb]{\smash{{\SetFigFont{9}{10.8}{\familydefault}{\mddefault}{\updefault}$U'$}}}}
\put(1216,-5371){\makebox(0,0)[lb]{\smash{{\SetFigFont{9}{10.8}{\familydefault}{\mddefault}{\updefault}$U$}}}}
\put(8146,-5191){\makebox(0,0)[lb]{\smash{{\SetFigFont{9}{10.8}{\familydefault}{\mddefault}{\updefault}$U'$}}}}
\end{picture}%

\end{center}

Precisely, 
if $U$ is an $\epsi_0/2$-neck with parametrisation $\phi : S^2\times (-2\epsi_{0}^{-1},2\epsi_{0}^{-1}) \to U$, we set 
$U':=\phi(S^2\times (-\epsi_{0}^{-1},\epsi_{0}^{-1}))$. If $U$ is a cap, then $U$ is the union of two sets $V,W$, where $\overline{W} \cap V=\bord V$ and $W$ is an $\epsi_0/2$-neck with parametrisation $\phi$. Then we set $W':= \phi(S^2\times (0,2\epsi_{0}^{-1}))$ and $U':=V \cup W'$.

\begin{claim}
The restriction of the projection map $p$ to $U'$ is injective.
\end{claim}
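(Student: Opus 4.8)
## Proof Strategy

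The plan is to argue by contradiction: suppose $p$ restricted to $U'$ is not injective, so there exist distinct points $y_1, y_2 \in U'$ with $p(y_1) = p(y_2)$, i.e.\ $y_2 = \gamma \cdot y_1$ for some nontrivial deck transformation $\gamma$ in the group $\Gamma$ acting on $M$. The key geometric input is that $U$ is an $(\epsi_0/2, C_0)$-canonical neighbourhood, so on $U$ the scalar curvature is controlled: by Definition~\ref{def:epsi-C cap}(ii) (applied with the halved parameter), $R$ takes values in a fixed compact subinterval of $(C_0^{-1}R(\bar x, t), C_0 R(\bar x, t))$, and since $R(\bar x, t) \geq r^{-2}$, the metric $g(t)$ near $U$ has curvature scale comparable to $r$. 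Both $y_1$ and $\gamma y_1$ lie in $U$, which has diameter comparable to $R(\bar x,t)^{-1/2} \lesssim r$ (from the neck/cap structure), so $\gamma$ moves a point of $M$ a distance $O(r)$.

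Next I would exploit that $U'$ is the \emph{truncated} neighbourhood sitting well inside $U$: every point of $U'$ has a collar of definite size (on the order of $\epsi_0^{-1}$ times the curvature scale) around it that is still contained in $U$, so the full $\epsi_0/2$-neck or cap structure is available in a neighbourhood of the segment joining $y_1$ to $\gamma y_1$. Since $\gamma$ preserves the metric $g(t)$ on $M$, it must carry the canonical neighbourhood structure near $y_1$ to the canonical neighbourhood structure near $\gamma y_1$; in particular $\gamma$ maps the local $S^2$-cross-section (the nearby almost-round $2$-spheres of the $\epsi_0/2$-neck, or the sphere $\bord V$ in the cap case) to another such cross-section at distance $O(r)$. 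The contradiction is then of the standard ``short loop on an $\epsi$-neck'' type: a nontrivial isometry of $M$ that moves a point only $O(r)$ and respects the near-cylindrical geometry of an $\epsi_0/2$-neck must essentially be a rotation of the $S^2$-factor or a short translation along the neck, and composing with the projection would force the quotient to contain a very short noncontractible loop through $x$ — but this loop lies in a region $\epsi_0/2$-homothetic to the standard cylinder, whose injectivity radius at scale $r$ is bounded \emph{below} (the cylinder $S^2 \times \Rr$ has no short loops), contradicting $p(y_1) = p(y_2)$ with $y_1 \neq y_2$. Equivalently: one shows that in $(M, g(t))$ the point $\bar x$ has injectivity radius $\geq c(\epsi_0) R(\bar x,t)^{-1/2}$ because a neighbourhood of it is $\epsi_0/2$-close to a rescaled piece of $S^2 \times \Rr$, and then $p$ must be injective on the ball of that radius, which by construction contains $U'$.

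The main obstacle is making precise the claim that $U'$ lies in the ``injective core'' of $U$ — i.e.\ quantifying how far inside $U$ the truncated set $U'$ sits relative to the intrinsic geometry, and checking that the deck transformation, which a priori could be a combination of a small rotation and a small translation along the neck together with a near-antipodal map on $S^2$, cannot achieve a nontrivial identification within that core. Concretely one must rule out the possibility that $\gamma$ acts like the antipodal map on the $S^2$-factor composed with a reflection, which would be an honest isometry of the exact cylinder; this is excluded because such a map would have to extend to an isometry of all of $(M,g(t))$ and, pulled through the neck, would be incompatible with $M$ being irreducible and not $S^3$ (a point-free involution would produce the wrong topology), or more simply because the $\epsi_0/2$-neck is \emph{centred} at $\bar x = \gamma^{-1}\cdot(\gamma \bar x)$ with $\gamma \bar x$ also in $U$, and the center of a neck is essentially unique up to the small error $\epsi_0$, pinning down $\gamma$ to move $\bar x$ a distance smaller than the neck's half-length and then forcing $\gamma = \mathrm{id}$ by the no-short-loops property of the model. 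I would carry this out by citing the corresponding injectivity-radius estimate for canonical neighbourhoods from~\cite{B3MP} (the same ``classical trick'' referenced in the text, where Dinkelbach–Leeb~\cite{dl:equi} handle the equivariant setting) and adapting the constants, so that the truncation from $\epsi_0/2$ to $\epsi_0$ leaves exactly the room needed for injectivity.
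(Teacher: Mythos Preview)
Your approach has a genuine gap, and it is precisely at the point you flag as ``the main obstacle.'' The claim that the quotient near $x$ lies ``in a region $\epsi_0/2$-homothetic to the standard cylinder, whose injectivity radius at scale $r$ is bounded below'' is circular: that the quotient $X$ has a canonical neighbourhood at $x$ (in particular, that it looks like a piece of $S^2\times\Rr$) is exactly what the claim is meant to establish. A priori the quotient carries no lower injectivity-radius bound---note that Addendum~\ref{add:existence} only asserts $\bar g(\cdot)$ is an $(r,\delta)$-bubbling-off flow, \emph{not} an $(r,\delta,\kappa)$-one, so you have no non-collapsing on $X$. Concretely, nothing in your argument excludes the case where $\gamma$ approximates a translation of length $\tau$ along the neck direction with $\tau$ small compared to the neck length $2\epsi_0^{-1}R(\bar x,t)^{-1/2}$: then $y$ and $\gamma y$ both sit in $U'$, and the quotient near $x$ is close to $S^2\times(\Rr/\tau\Zz)$, which certainly has short loops. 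The lower bound on the injectivity radius of $M$ at $\bar x$ is irrelevant here; it constrains geodesic loops in the cover, not displacements by deck transformations.

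The paper's argument is essentially different and genuinely topological rather than metric. It invokes the Dinkelbach--Leeb equivariant foliation of the neck region by almost-round $2$-spheres, chooses a leaf $S$ in $U\setminus U'$, and uses irreducibility of $M$ (and $M\not\cong S^3$) to see that $S$ bounds a $3$-ball $B\supset U'$. The leaf $\gamma S$ is then compared with $S$: if $\gamma S=S$ then either $\gamma B=B$, forcing a fixed point of $\gamma$ on the ball (contradicting freeness), or $M=B\cup\gamma B\cong S^3$; if $\gamma S\subset B$ then $\gamma B\subsetneq B$, but $\vol(\gamma B)=\vol(B)$ since $\gamma$ is an isometry, contradicting positivity of the volume of the annular region; the remaining cases are symmetric or give $B\cap\gamma B=\emptyset$, contradicting $y,\gamma y\in U'\subset B$. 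It is this volume-versus-containment step that disposes of the short-translation scenario, and there is no substitute for it in your sketch.
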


Once the claim is proved, we can just project $U'$ to $X$ and obtain an $(\epsi_0,C_0)$-canonical neighbourhood for $(x,t)$, so we are done.

To prove the claim we consider two cases:

\paragraph{Case 1} $U$ and $U'$ are caps.

Assume by contradiction that there is an element $\gamma$ in the deck transformation group, different from the identity, and a 
point $y\in U'$ such that $\gamma y\in U'$. Following~\cite{dl:equi}, we consider the subset $N_{\epsi_0}$ of $M$ consisting of 
points which are centres of $\epsi_0$-necks. According to~\cite[Lemmas 3.6, 3.7]{dl:equi} there is an open supset $F \supset \overline{N_{\epsi_0}}$ which has an equivariant foliation $\mathcal F$ by almost round $2$-spheres. 
All points sufficiently close to the centre of $W$ are centres of $\epsi_0$-necks.

Pick a point $z$ in $N_{\epsi_0} \cap W\setminus W'$ sufficiently far from $W'$ so that the leaf $S$ of $\mathcal F$ through $z$ is disjoint from $U'$. 
By Alexander's theorem, $S$ bounds a $3$-ball $B\subset U$. Note that $B$ contains $U'$. If $S=\gamma S$, then $B=\gamma B$ or $M=B\cup \gamma B$. The former possibility is ruled out by the fact that the action is free, while any self-homeomorphism of the $3$-ball has a fixed point. The latter is ruled out by the assumption that $M$ is not diffeomorphic to $S^3$.

Hence $S\neq \gamma S$. Since $S$ and $\gamma S$ are leafs of a foliation, they are disjoint. Then we have the following three possibilities:

\paragraph{Subcase a} $\gamma S$ is contained in $B$.

Then we claim that $\gamma B\subset B$. Indeed, otherwise we would have $M=B \cup \gamma B$, and $M$ would be diffeomorphic to $S^3$. Now $\gamma$ acts by isometry, so $\vol B=\vol \gamma B$. This is impossible since the annular region between $S$ and $\gamma S$ has nonzero volume. 

\paragraph{Subcase b} $S$ is contained in $\gamma B$.
This case is ruled out by a similar argument exchanging the roles of $S$ and $\gamma S$ (resp.~of $B$ and $\gamma B$.)

\paragraph{Subcase c}  $B$ and $\gamma B$ are disjoint.

Then since $U'\subset B$, the sets $U'$ and $\gamma U'$ are also disjoint, contradicting the existence of $y$.

\paragraph{Case 2} $U$ and $U'$ are necks.
Seeking a contradiction, let $\gamma$ be an element of the deck transformation group, different from the identity, and  $y$ be a point of $U'$ such that $\gamma y\in U'$. Consider again the set $N_{\epsi_0}$ defined above and the equivariant foliation $\mathcal F$. Since $U'$ is contained in the bigger set $U$, each point of $U'$ is centre of an $\epsi_0$-neck. Let $S$ (resp.~$\gamma S$) be the leaf of $\mathcal F$ passing through $y$ (resp.~$\gamma y$.) Since $M$ is irreducible, $S$ (resp.~$\gamma S$) bounds a $3$-ball $B$ (resp.~$B_\gamma$). As in the previous case, we argue that one of these balls is contained into the other, otherwise we could cover $M$ by $B,B_\gamma$ and possibly an annular region between them, and get that $M$ is diffeomorphic to $S^3$. Since $\gamma$ acts by an isometry, we must in fact have $B=B_\gamma$, and $\gamma$ has a fixed point, contradicting our hypotheses. This finishes the proof of the claim, hence that of Addendum~\ref{add:existence}.

\subsection{Stability of cusp-like structures}

In this section, we prove the stability of cusp-like structures under Ricci flow with bubbling-off. We consider a (nonspherical, irreducible) $3$-manifold $M$, endowed with a cusp-like metric $g_0$. 
To begin we remark that the universal cover of $M$ has bounded geometry, except in the case 
of solid tori:

\begin{lem}\label{lem:bd}
Assume that $M$ is not homeomorphic to a solid torus. 
Let $(\tilde M,\tilde g_0)$ denote the universal cover of $(M,g_0)$. Then $(\tilde M,\tilde g_0)$ has bounded geometry.
\end{lem}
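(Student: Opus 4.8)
The plan is to check the two defining properties of bounded geometry — a two‑sided sectional curvature bound and a uniform positive lower bound on the injectivity radius — separately. The curvature bound is free: a cusp‑like metric $g_0$ has bounded curvature (as noted right after Definition~\ref{def:cusp-like}), and this bound is inherited by any Riemannian cover, in particular by $(\tilde M,\tilde g_0)$. So everything reduces to bounding $\inj_{\tilde g_0}$ from below, and I would do this by splitting $\tilde M$ into the preimage of a compact core of $M$ and the preimages of the cusp ends, handled by quite different arguments.

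Over the compact part the bound is automatic. Write $M=K\cup V_1\cup\dots\cup V_b$, where each $V_i$ is a cusp neighbourhood identified with $T^2\times[0,\infty)$ carrying, up to a homothety, the model metric $g_{\cusp,i}=e^{-2s}\sigma_i+ds^2$ with $\sigma_i$ flat on $T^2$ and $g_0-g_{\cusp,i}\to 0$ in every $C^k$-norm as $s\to\infty$, and where $K$ is a large compact core; enlarging $K$, arrange $K\supset M\setminus\bigcup_i\{s_i>\tau\}$ for a large $\tau$ to be fixed later. On the compact set $K$ the function $\inj_{g_0}$ is continuous and positive, hence $\ge 2\iota_1>0$; since the covering $p\colon(\tilde M,\tilde g_0)\to(M,g_0)$ does not decrease the injectivity radius, $\inj_{\tilde g_0}\ge 2\iota_1$ on $p^{-1}(K)$.

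The heart of the argument is the behaviour near the ends, and this is where the hypothesis that $M$ is not a solid torus is used — this step is, I expect, the only real obstacle. Let $N$ be the toral-boundary compactification of $M$; it is irreducible because $M$ is. I would first show that every boundary torus of $N$ is incompressible: recall that a compact orientable irreducible $3$-manifold with a compressible boundary torus is a solid torus, which is excluded here; hence each peripheral subgroup $\pi_1 V_i\cong\Zz^2$ injects into $\pi_1 M$. It follows that each connected component $\tilde V$ of $p^{-1}(V_i)$ is the \emph{universal} cover of $V_i$, and the universal cover of the model cusp $(T^2\times[0,\infty),g_{\cusp,i})$ is a horoball $B\subset\mathbb{H}^3$; thus $(\tilde V,\tilde g_0)$ is, for $s$ large, a $C^k$-small perturbation of a genuine horoball. (Without $\pi_1$-injectivity the cusp preimages would be the intermediate $\Zz$-covers $(\text{annulus})\times[0,\infty)$ of a cusp, which do collapse at infinity — this is exactly the solid‑torus exception.) Now the geometric input: a horoball $B$ is convex in $\mathbb{H}^3$, so for any $y\in B$ the intrinsic ball $B_B(y,r_0)$ contains a round $\mathbb{H}^3$-ball of radius $r_0/2$ (push $y$ inward along the normal geodesic), hence has volume at least a universal constant $v_{\mathrm{hor}}(r_0)>0$. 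Transporting this across the $C^k$-small perturbation, and choosing $\tau$ large enough (relative to a fixed small $r_0\in(0,\min(1,2\iota_1))$) that an $r_0$-ball centred over a deep cusp point stays in the region where $\tilde g_0$ is close to the model, I get $\vol_{\tilde g_0}B_{\tilde g_0}(\tilde x,r_0)\ge v_2>0$ for all $\tilde x\in p^{-1}(M\setminus K)$.

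Finally I combine the two regimes. Over $p^{-1}(K)$ we have $\inj_{\tilde g_0}\ge 2\iota_1>r_0$, so Günther volume comparison (using the curvature bound) gives $\vol_{\tilde g_0}B_{\tilde g_0}(\tilde x,r_0)\ge v_1>0$ there; together with the previous paragraph this shows $(\tilde M,\tilde g_0)$ is uniformly non-collapsed at the fixed scale $r_0$. Feeding this non-collapsing bound and the curvature bound into the standard injectivity-radius estimate (Cheeger--Gromov--Taylor) yields $\inj_{\tilde g_0}\ge\iota_0>0$ everywhere on $\tilde M$. Since $\tilde g_0$ also has bounded curvature, it has bounded geometry, which completes the proof. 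The only delicate step is the topological one in the previous paragraph; the rest is routine comparison geometry against the horoball model together with the bookkeeping of choosing $\tau$ large relative to $r_0$.
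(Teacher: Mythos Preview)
Your proof is correct, and it agrees with the paper on the overall architecture: bounded curvature is free, the compact core $K$ gives a uniform injectivity radius on $p^{-1}(K)$, and the hypothesis that $M$ is not a solid torus forces the peripheral tori to be incompressible so that each component of $p^{-1}(V_i)$ is the \emph{universal} cover of the cusp. The two proofs diverge only in how they handle the cusp region. The paper observes that, once $K$ is chosen so that each $C=V_i\setminus K$ is $\epsi$-homothetic to a hyperbolic cusp, the lifted piece $\tilde C$ is simply connected with negative sectional curvature; a Hadamard-type argument then gives directly $\inj_{\tilde M}(p)\ge d(p,\partial\tilde C)$ for $p\in\tilde C$, and combining this with the bound $i_0$ on the $5$-neighbourhood of $\tilde K$ yields $\inj(\tilde M)\ge\min\{i_0,5\}$. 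Your route instead exploits the horoball model to obtain a uniform \emph{volume} lower bound on $r_0$-balls in $\tilde V$, merges it with the compact-core volume bound, and then invokes Cheeger--Gromov--Taylor to convert non-collapsing plus bounded curvature into an injectivity-radius bound. The paper's argument is shorter and more elementary (it needs nothing beyond Hadamard), and it even shows that the injectivity radius \emph{grows} linearly as one goes into the cusp; your argument trades this for a black-box appeal to CGT but is arguably more robust, since it would survive if the cusp model were merely non-collapsed rather than negatively curved.
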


\begin{proof}
Sectional curvature is bounded on $(M,g_0)$, hence on the universal cover $(\tilde M,\tilde g_0)$ by the same constant. 
Observe that for any lift $\tilde x \in \tilde M$ of some $x\in M$, the injectivity radius at $\tilde x$ is not less than the injectivity radius 
at $x$. Fix a compact subset $K \subset M$  such that each connected component $C$ of $M \setminus K$ is $\epsi$-homothetic 
to a hyperbolic cusp neighbourhood, for some small $\epsi>0$. Let $\tilde K$ denote any lift of $K$ to $\tilde M$. Then the $5$-neighbourhood of $\tilde K$ 
has injectivity radius bounded below by $i_0>0$, the injectivity radius of the (compact) $5$-neighbourhood of $K$. Now consider a lift $\tilde C$ of a 
cuspidal component $C$. The boundary $\partial C$ is incompressible in $M$, otherwise $M$ would be homeomorphic to a solid 
torus (see Theorem A.3.1 in \cite{B3MP}). It follows that $\tilde C$ is simply connected with an incomplete metric of negative sectional curvature. 
Arguing as in the proof of the Hadamard theorem, it follows that the injectivity radius at a given point $p \in \tilde C$ is not less than 
$d(p,\partial \tilde C)$. 
Together with the previous estimate, this implies that $\inj(\tilde M,\tilde g_0) \ge \min\{i_0,5\}>0$.  
\end{proof}

Let us denote by $g_c$ a metric on $M$ which is hyperbolic on the complement of some compact subset of $M$, and such that, 
for each end $E$ of $M$ there is a factor $\lambda_E>0$ such that $\lambda_E g_0 - g_c$ goes to zero at infinity in the end, 
in $C^k$-norm for each integer $k$. Let $g(\cdot)$ be a Ricci flow with $(r(\cdot),\delta(\cdot))$-bubbling-off on $M$ such that $g(0)=g_0$, 
defined on $[0,T]$ for some $T>0$. Set $\lambda_E(t)=\frac{\lambda_E}{1+4\lambda_Et}$. We then have:
\begin{theo}[Stability of cusp-like structures]\label{thm:cusp-like} \label{thm:stability2}  
For each end $E$ of $M$, $\lambda_E(t) g(t)-g_c$ goes to zero at infinity in this end, in $C^k$-norm for each integer $k$, uniformly for $t \in [0,T]$.
\end{theo}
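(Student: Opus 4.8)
The plan is to compare the flow $g(\cdot)$, in a neighbourhood of each end, with the explicit Ricci flow that scales the cusp metric, and then to run a pointed-compactness argument on the universal cover.

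Fix an end $E$. Near $E$ the metric $g_c$ is hyperbolic, so $\Ric(g_c)=-2g_c$ there; since $\lambda_E(t)^{-1}=\lambda_E^{-1}+4t$, the evolving metric $g_E(t):=\lambda_E(t)^{-1}g_c$ satisfies $\partial_t g_E=4g_c=-2\Ric(g_E)$ on a neighbourhood $V_E$ of $E$, i.e.\ it is an honest Ricci flow there, with $\lambda_E(t)g_E(t)=g_c$ on $V_E$; moreover $g_E(0)-g_0=\lambda_E^{-1}(g_c-\lambda_E g_0)\to0$ at infinity in every $C^k$-norm. So the assertion is that this initial $C^k$-decay of $g_0-g_E(0)$ persists, after rescaling by $\lambda_E(t)$, for all $t\in[0,T]$. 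I would first reduce to the universal cover: assuming as we may that $M$ is not a solid torus, $(\tilde M,\tilde g_0)$ has bounded geometry by Lemma~\ref{lem:bd}, the flow lifts to a Ricci flow with bubbling-off $\tilde g(\cdot)$ by Addendum~\ref{add:existence}, and since $p\colon\tilde M\to M$ is a local isometry it suffices to prove the corresponding statement on $\tilde M$. I would then record that if $x_i\to\infty$ in $E$ and $\tilde x_i$ is a lift, then $\inj(\tilde M,\tilde g_0,\tilde x_i)\to\infty$ (the end lifts to a scaled horoball whose bounding horosphere recedes to infinity), hence $(\tilde M,\tilde g_0,\tilde x_i)\to(\mathbb{H}^3,\lambda_E^{-1}g_\hyp,\ast)$ in the pointed smooth topology, and likewise $(\tilde M,\lambda_E^{-1}\tilde g_c,\tilde x_i)\to(\mathbb{H}^3,\lambda_E^{-1}g_\hyp,\ast)$, exactly for $i$ large.

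The heart of the argument is to show that, shrinking $V_E$ if necessary, the restriction of $\tilde g(\cdot)$ to $p^{-1}(V_E)$ is unscathed over $[0,T]$ and has uniformly bounded geometry there. Granting this, I would extract from $(\tilde M,\tilde g(\cdot),\tilde x_i)$ a subsequential smooth pointed limit Ricci flow $(\mathbb{H}^3,g_\infty(\cdot),\ast)$ on $[0,T]$ by Hamilton's compactness theorem, with $g_\infty(0)=\lambda_E^{-1}g_\hyp$ by the previous paragraph; uniqueness of complete bounded-curvature Ricci flows (Chen--Zhu) then forces $g_\infty(t)=(\lambda_E^{-1}+4t)g_\hyp=\lambda_E(t)^{-1}g_\hyp$. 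If the theorem failed for $E$, there would be $k$, $\epsi>0$, $t_i\in[0,T]$ and $x_i\to\infty$ in $E$ with $|\lambda_E(t_i)g(t_i)-g_c|_{C^k}(x_i)\ge\epsi$; passing to a subsequence with $t_i\to t_\infty$, the smooth convergence on a fixed parabolic ball about $\ast$ yields $\lambda_E(t_i)\tilde g(t_i)\to\lambda_E(t_\infty)g_\infty(t_\infty)=g_\hyp$ and $\lambda_E^{-1}\tilde g_c\to\lambda_E^{-1}g_\hyp$, so $\lambda_E(t_i)\tilde g(t_i)-\tilde g_c\to0$ in $C^k$ near $\tilde x_i$, contradicting the lower bound (the $C^k$-norm being unchanged upstairs). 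Since $M$ has finitely many ends, this proves the theorem. Alternatively, once bubbling-off is excluded on $p^{-1}(V_E)$, one may place $\tilde g(\cdot)$ in Ricci--DeTurck gauge relative to $\tilde g_E(\cdot)$ and read off the uniform $C^k$-decay of the perturbation from standard parabolic estimates, using the bounded geometry and pinched-negative curvature of the reference flow.

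The step I expect to be hardest is the one just granted: excluding bubbling-off near the ends, together with the uniform curvature bound on $[0,T]$. A priori $\tilde g(\cdot)$ might develop curvature $\ge\Theta$ deep in a cusp. I would treat this by a continuity argument in time. The model $g_E(t)$ has constant sectional curvature $-\lambda_E(t)\in[-\lambda_E,0)$, so on any sub-interval of $[0,T]$ on which $\tilde g(t)$ stays $C^2$-close to (the lift of) $g_E(t)$ on a fixed large neighbourhood of $E$, its curvature there is far below the bubbling-off threshold and the flow is smooth; on such an interval, a local stability estimate for Ricci--DeTurck flow around the bounded-geometry reference $g_E(\cdot)$ — equivalently, propagation of $C^k$-decay for a uniformly parabolic equation with bounded coefficients — shows that the closeness, hence the uniform decay of $\lambda_E(t)\tilde g(t)-\tilde g_c$ at infinity, is preserved and even improves; thus the set of admissible times is nonempty, open and closed in $[0,T]$, hence all of $[0,T]$ (one also uses here that surgery balls have diameter $\ll1$, so they cannot reach into a deep-enough $V_E$ from the region of large curvature). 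One can alternatively rule out a deep-cusp canonical neighbourhood directly: such a neighbourhood carries volume $\sim R^{-3/2}$ at its scale, incompatible with the collapsing of the cusp cross-sections unless its scale is tiny, where Perelman's pseudolocality theorem prevents the curvature concentration.
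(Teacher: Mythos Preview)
Your overall strategy---compare the flow near each end to the explicit scaled-hyperbolic Ricci flow and argue by continuity in time---is the paper's strategy too, but the execution differs, and there is a genuine gap at exactly the step you flag as hardest.

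The paper does not pass to the universal cover or invoke Hamilton compactness plus Chen--Zhu uniqueness. It works directly on $M$, with the hyperbolic flow on $T^2\times\Rr$ as model, and uses the Persistence Theorem of \cite[Theorem~8.1.3]{B3MP} in place of your pointed-limit extraction (so the compactness and uniqueness are already packaged). More importantly, the paper does not run a bare open-and-closed argument. It first proves a lemma (Lemma~\ref{lem:boundedcurv}) that supplies a \emph{uniform} time step $\sigma>0$, depending only on the surgery parameters on $[0,T]$: if $R\le 0$ on a cusp region $\mathcal{C}_s$ at time $\Delta$, then $\mathcal{C}_s\times[0,\Delta+\sigma]$ is unscathed with $|\Rm|\le K$. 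The mechanism is the canonical-neighbourhood property $(CN)_r$: wherever $R\ge r^{-2}$ one has $|\partial_t R|\le C_0R^2$, so $R$ cannot rise from $0$ to $2r^{-2}$ in time less than $\sigma=r_{N-1}^2/(2C_0)$. One then alternates this lemma with the Persistence Theorem, shrinking the cusp neighbourhood by a fixed amount $\rho_A-A$ at each of the finitely many ($[T/\sigma]$) steps, and lets $A\to\infty$ at the end.

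Your continuity argument lacks this uniform step, and the substitutes you offer do not fill it. Any local Ricci--DeTurck or interior parabolic estimate needs either boundary control or room to shrink the spatial domain; on a ``fixed large neighbourhood of $E$'' you have neither, so openness is not established, while if you let the neighbourhood shrink with $t$ without a uniform step you cannot prevent it from degenerating before reaching $T$. Pseudolocality is calibrated to almost-Euclidean initial data rather than hyperbolic, and in any case has no off-the-shelf version for Ricci flow with bubbling-off. Finally, your exclusion of surgeries via ``surgery balls have diameter $\ll 1$'' is not correct: a surgery $3$-ball is bounded by a $\delta$-neck but can itself be large. The paper's argument is that the surgery \emph{sphere} has curvature $\approx h^{-2}$, hence lies outside the cusp region where $R\le 2r^{-2}\ll h^{-2}$; if the cusp were nevertheless scathed it would have to sit inside a surgery $3$-ball, which is impossible since a cusp end of $M$ cannot be contained in a $3$-ball of $M$.
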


\begin{proof}
 
Let us first explain the idea. It is enough to work on each cusp. The main tool is the Persistence Theorem 8.1.3 from \cite{B3MP}, which 
proves that a Ricci flow remains close, on a parabolic neighbourhood where it has a priori curvature bounds, to a 
given Ricci flow model, if the initial data are sufficiently close on some larger balls. The model we use now is a hyperbolic 
Ricci flow on $T^2 \times \RR$. To obtain the required curvature bounds, we shall consider an interval 
$[0,t]$ where the closeness to the hyperbolic flow holds, and $\sigma>0$ fixed small enough 
so that Property $(CN)_r$, which prevents scalar curvature to explode too fast, gives 
curvature bounds on $[0,t+\sigma]$. The Persistence Theorem then gives closeness to the hyperbolic flow until time $t+\sigma$ on a smaller 
neighbourhood of the cusp. One can iterate this procedure, shrinking the neighbourhood of the cusp by a definite amount 
at each step, until time $T$.


Let us now give the details. Let $E$ be an end of $M$ and $U$ be a neighbourhood of $E$ such that $(U,g_c)$ is isometric 
to $(\textbf{T}^2\times [0, +\infty), g_\hyp=e^{-2r}g_{\textbf{T}^2}+dr^2)$, where $g_{\textbf{T}^2}$ is flat. 
Let $\phi: \textbf{T}^2\times [0, +\infty) \to U$ be an isometric parametrisation (between $g_c$ and $g_\hyp$.) Then  
$\lambda_E \phi^\ast g_0 - g_\hyp$ and its derivatives go to zero at infinity. 
We may assume for simplicity that $\lambda_E=1$, and we define $\bar g(t):=\phi^\ast g(t)$ to be the pullback Ricci flow with bubbling-off 
on $\textbf{T}^2\times [0, +\infty)$. Let $g_\hyp(\cdot)$ denote the Ricci flow on $\textbf{T}^2\times \textbf{R}$ such that $g_\hyp(0)=e^{-2r}g_{\textbf{T}^2}+dr^2$, i.e. $g_\hyp(t)=(1+4t)g_\hyp$. We use it as the Ricci flow model, in the sense of \cite[Theorem 8.1.3.]{B3MP}.  Our goal is to compare $g_\hyp(t)$ to $\bar g(t)$. 

By definition of our Ricci flow with bubbling-off, $r(\cdot)$ and $\Theta(\cdot)$ are piecewise constant. 
More precisely, there exist $0=t_0<t_1<\dots <t_N=T$ such that $r(t)=r_i$ and $\Theta (t)=\Theta_i$ on $(t_i, t_{i+1}]$. In fact, we can 
choose $t_i=i$ for $i<N$ (cf. Definition \ref{def:rdelta}).
In particular, $g(t)$ satisfies the canonical neighbourhood property at scale $r_{i}$  on this interval (every point at which the scalar curvature 
is greater than $r_i^{-2}$ is centre of an $(\varepsilon_0, C_0)$ canonical neighbourhood) and the 
scalar curvature is bounded above by $\Theta_i$. The pinching assumption (cf.~Definition~\ref{def:pinching}) then implies that the full curvature tensor is bounded by some $K_i$ on the same interval.

Set $K:=\sup_{i=1,\dots,N-1}\big\{ K_i\big\}$. Define a small number $\sigma >0$ by setting	
$$\sigma:=\frac{r_{N-1}^2}{2C_0}\leq \frac{r_i^2}{2C_0}\,\quad \forall i=0,\dots ,N-1\,.$$
This number is small enough so that $g(\cdot)$ cannot develop a singularity on a cusp on $[t,t+\sigma]$ if $R \leq 0$ at time $t$. Precisely, let us put ${\mathcal C}_s := \textbf{T}^2\times [s, +\infty)$, for $s \geq 0$. Then we have: 
\begin{lem}\label{lem:boundedcurv}
 If $\bar g(\cdot)$ is unscathed on ${\mathcal C}_s\times [0, \Delta ]$ and has scalar curvature $R\leqslant 0$ there, 
then it is also unscathed on ${\mathcal C}_s\times [0, \Delta +\sigma]$ and has curvature tensor bounded by $K$.
\end{lem}
\begin{proof}
We know that singular times are discrete. Let $t \in [0,\sigma]$ be maximal such that $\mathcal{C}_s\times [0, \Delta +t]$ is 
unscathed for $\bar g(\cdot)$ (possibly $t=0$). 

We prove first that for $x\in \mathcal C_s$ and  $t'\in [\Delta, \Delta +t]$  we have 
\begin{equation*}  
R(x,t')\leq 2r(t')^{-2}<<h(t')^{-2}.\,
\end{equation*}
Indeed, since $r(\cdot)$ is nonincreasing, $g(\cdot)$ satisfies $(CN)_{r(\Delta +t)}$ on $[\Delta,t']$. If $R(x,\Delta)\leq 0$ and $R(x,t')>2r(t')^{-2}$, then we can find a subinterval $[t_1,t_2]\subset [\Delta, t']$ such that for $u\in [t_1,t_2]$, 
$R(x, u)\geq r(t')^{-2}$, $r(x,t_1)=r(t')^{-2}$, and $r(x,t_2)=2r(t')^{-2}$.

Then the inequality $\vert \frac{\partial R}{\partial t}\vert <C_0 R^2$ holds on $\{x\}\times [t_{1},t_{2}]$, thanks to 
Property~\eqref{eq:Delta R} of canonical neighbourhoods (cf Remark \ref{rem:partial l}). 
The contradiction follows by integrating this inequality and using the fact that $t_2-t_1 <\sigma$.

Assume now that $t<\sigma $. Then there is a surgery at time $\Delta + t$ and, by definition of the maximal time, $\phi(\mathcal C_s)$ is scathed at time $\Delta +t$. The surgery spheres are disjoint 
from $\phi(\mathcal C_s)$, as they have curvature $\approx (h(\Delta+t))^{-2}$, where $h(\Delta+t)$ is the cutoff parameter, and curvature on $\phi(\mathcal C_s)$ is less than $2r(t')^{-2} <<(h(\Delta+t))^{-2}$. By definition of our surgery, this means that $\phi(\mathcal C_s) \subset M$ is contained in a $3$-ball where the metric surgery is performed. But a cusp of $M$ cannot be contained in a $3$-ball of $M$, hence we get a contradiction. We conclude that  $t=\sigma$ and $R(x,t')\leq 2r(t')^{-2}$, $\forall t'\in [\Delta , \Delta +\sigma ]$. The pinching assumption then implies $\vert \Rm \vert < K$ there.
\end{proof}


For every $A>0$, let $\rho_A =\rho (A,T,K)$ be given by the Persistence Theorem 8.1.3 of \cite{B3MP}. The proof of Theorem 
\ref{thm:stability2} is obtained by iteration of Lemma \ref{lem:boundedcurv} and the Persistence Theorem as follows.
%

Fix $A>0$. Let $s_0>0$ be large enough so that $\bar g(0)$ is $\rho_A^{-1}$-close to 
$g_\hyp(0)$ on $\calc_{s_0}$. In particular $R \leq 0$ there, so by
Lemma~\ref{lem:boundedcurv}, $\bar g(\cdot)$ is 
unscathed on $\calc_{s_0}\times [0,\sigma]$, with curvature tensor bounded by $K$. The above-mentioned Persistence 
Theorem applied to $P(q,0,A,\sigma )$, for all $q\in {\mathcal C}_{s_0+\rho_A}$, shows that 
$\bar g(t)$ is $A^{-1}$-close to $g_\hyp(t)$ there. Hence 
on ${\mathcal C}_{s_0+\rho_A-A}\times [0, \sigma ]$, $\bar g(\cdot)$ is $A^{-1}$-close to $g_\hyp(\cdot)$, and 
in particular $R\leqslant 0$ there. We then iterate this argument, applying Lemma~\ref{lem:boundedcurv} 
and the Persistence Theorem, $n=[T/\sigma]$ times and get that $\bar g(\cdot)$ is $A^{-1}$-close to 
$g_\hyp(\cdot)$ on $\calc_{s_0+n(\rho_A-A)}\times [0,T]$. 

By letting $A$ go to infinity and rescaling appropriately, this finishes the proof of Theorem~\ref{thm:stability2}.  
\end{proof}


\section{Thick-thin decomposition theorem}

Let $(X,g)$ be a complete Riemannian $3$-manifold and $\epsi$ be a positive number. The \bydef{$\epsi$-thin part} of $(X,g)$ is the subset $X^-(\epsi)$ of 
points $x\in X$ for which there exists $\rho\in (0,1]$ such that on the ball $B(x,\rho)$ all sectional curvatures are at least $-\rho^{-2}$ and the volume 
of this ball is less than $\epsi \rho^3$. Its complement is called the \bydef{$\epsi$-thick part} of $(X,g)$ and denoted by $X^+(\epsi)$. 
The aim of this section is to gather curvature and convergence estimates on the $\epsi$-thick part of $(M,t^{-1}g(t))$ as $t \to \infty$, when $g(\cdot)$ is a Ricci flow with $(r(\cdot),\delta(\cdot))$-bubbling-off for suitably chosen surgery parameters $r(\cdot)$ and $\delta(\cdot)$.
{\bf Here, we assume $M$ irreducible, nonspherical and not Seifert fibred. We assume also that $M$ is not homeomorphic to $\RR^3$}, which does not have cusp-like metrics.
As a consequence, {\bf M does not have a complete metric with $\Rm \ge 0$}. In the compact case, this follows from Hamilton's classification theorem
(Theorem~B.2.5 in Appendix~B of~\cite{B3MP}). In the noncompact case, this follows from the Cheeger-Gromoll theorem and the Soul theorem
(cf.~B.2.3 in~\cite{B3MP}).  

Recall that $r(\cdot)$ has been fixed in Definition~\ref{def:rdelta}. In \cite[Definition 11.1.4]{B3MP}, we define a positive nonincreasing function $\bar \delta(\cdot)$ such that any Ricci flow with $(r(\cdot),\delta(\cdot))$-bubbling-off satisfies some technical theorems---Theorems 11.1.3 and 11.1.6, analoguous to \cite[Propositions 6.3 and  6.8]{Per2}---if $\delta \leq \bar \delta$ and the initial metric is normalised.

Both Theorems~11.1.3 and~11.1.6 remain true for a Ricci flow with $(r(\cdot),\delta(\cdot))$-bubbling-off on a noncompact nonspherical irreducible manifold, with the weaker assumption that the metric has normalised curvature at time $0$, i.e. $\tr \Rm^2 \leqslant 1$ for the initial metric, instead of being normalised in the sense of Definition~\ref{def:normalised}. In particular it applies to metrics which are cusp-like at infinity. Indeed, the proofs of theorems 11.1.3 and 11.1.6 do not use the assumption on the volume of unit balls for the initial metric; it only uses the assumption on the curvature, mainly through the estimates \eqref{eq:pinching 1}-\eqref{eq:pinching 2}. It uses neither the compactness of the manifold, the finiteness of the volume nor the particular manifold. We recall that the core of Theorem~11.1.3 is to obtain $\kappa$-noncollapsing property, canonical neighbourhoods and curvature controls relatively to a distant ball satisfying a lower volume bound assumption. The parameters then 
depend on the distance to the ball and on its volume, not on time or initial data. These estimates are then used to control the thick part (Theorem 11.1.6).  

We gather below results following mainly from Perelman \cite[6.3, 6.8, 7.1-3]{Per2}. We need some definitions.

Given a Ricci flow with bubbling-off on $M$, we define
$$ \rho(x,t):=\max \{\rho >0\ :\  \Rm \geqslant -\rho^{-2}\ \textrm{ on } B(x,t,\rho)\ \}$$
and $\rho_{\sqrt{t}} := \min\{\rho(x,t),\sqrt{t}\}$. We denote by $\tilde M$ the universal cover of $M$ and $\tilde g(t)$ the lifted evolving metric, which is by Addendum \ref{add:existence} a Ricci flow with $(r(\cdot),\delta(\cdot))$-bubbling-off if $g(t)$ is. If $x \in M$, we denote by $\tilde x \in \tilde M$ a lift of $x$ and by $\tilde B(\tilde x,t,r)$ the $r$-ball in $(\tilde M,\tilde g(t))$ centered at $\tilde x$. An evolving metric $\{g(t)\}_{t\in I}$ on $M$ is said to have \bydef{finite volume} if $g(t)$ has finite volume for every $t\in I$. We denote this volume by $V(t)$. We then have:

\begin{prop}\label{prop:thick thin} \label{prop:bar rho bis} \label{thm:thick thin} For every $w>0$ there exists 
$0 < \bar \rho(w) <\bar r(w) < 1$, $\bar T=\bar T(w)$, $\bar K=\bar K(w) >0$ such that for any Ricci flow with $(r(\cdot),\delta(\cdot))$-bubbling-off $g(\cdot)$ on $M$ such that $\delta(\cdot)\le \bar\delta(\cdot)$ 
and with normalised curvature at time $0$, the following holds:
\begin{enumerate}[(i)]
\item \label{prop:bar r} For all $x \in M$, $t\geqslant \bar T$ and $0 < r \le \min\{\rho(x,t),\bar r \sqrt{t}\}$, if $\vol \tilde B(\tilde x,t,r) \geqslant wr^3$ for some 
lift $\tilde x$ of $x$ then $|\Rm| \leqslant Kr^{-2}$, $|\nabla \Rm| \leqslant Kr^{-3}$ and  $|\nabla^2 \Rm | \leqslant Kr^{-4}$ on $B(x,t,r)$.
\item  For all $x \in M$ and $t \geqslant \bar T$, if $\vol \tilde B(\tilde x,t,r) \ge wr^3$ for some lift 
$\tilde x$ of $x$ where $r=\rho(x,t)$, then 
$\rho(x,t) \geqslant \bar \rho \sqrt{t}$.
\item  If $g(\cdot)$ has finite volume, then:
\begin{enumerate}[(a)]
\item There exists $C>0$ such that $V(t)\le C t^{3/2}$.
\item Let $w>0$, $x_n\in M$ and $t_n\to +\infty$. If $x_n$ is in the $w$-thick part of $(M,t_n^{-1}g(t_n))$ for every $n$, then the sequence of 
pointed manifolds $(M,t_n^{-1} g(t_n),x_n)$ subconverges smoothly to a complete finite volume pointed 'hyperbolic' $3$-manifold of sectional curvature $-1/4$.
\end{enumerate}
\end{enumerate}
\end{prop}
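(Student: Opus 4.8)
The plan is to prove the three items by adapting Perelman's arguments (6.3, 6.8, 7.1--7.3 of \cite{Per2}), as reorganised in Chapter~11 of \cite{B3MP}, to the noncompact setting, using throughout the fact that the technical Theorems 11.1.3 and 11.1.6 hold here under the weaker hypothesis of normalised curvature at time~$0$. The key mechanism is to pass to the universal cover: by Addendum~\ref{add:existence} the lifted flow $\tilde g(\cdot)$ is again a Ricci flow with $(r(\cdot),\delta(\cdot))$-bubbling-off, so the $\kappa$-noncollapsing and canonical neighbourhood conclusions of Theorem~11.1.3 apply upstairs, and the volume-ratio hypothesis $\vol\tilde B(\tilde x,t,r)\ge wr^3$ is exactly the hypothesis needed to feed those statements. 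The curvature of $\tilde g(t)$ at $\tilde x$ equals that of $g(t)$ at $x$, so any pointwise curvature bound obtained on $\tilde M$ transfers back verbatim to $M$.

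For item~(i): given $x,t,r$ with $r\le\min\{\rho(x,t),\bar r\sqrt t\}$, the hypothesis gives a lift $\tilde x$ with $\vol\tilde B(\tilde x,t,r)\ge wr^3$ and, by definition of $\rho(x,t)$, $\Rm\ge -r^{-2}$ on $B(x,t,r)$ hence on $\tilde B(\tilde x,t,r)$. This is precisely the ``$\phi$-almost nonnegative curvature on a ball of controlled volume'' situation of Perelman 6.3 / Theorem 11.1.3, which after choosing $\bar r(w)$ small (so that the pinching function $\phi_t$ makes the curvature genuinely almost nonnegative at the relevant scale, using $\phi_t(s)/s\to0$ and $t\ge\bar T(w)$ large) yields $|\Rm|\le \bar K r^{-2}$ on $B(x,t,r/2)$, say; a covering/rescaling argument upgrades this to the stated ball, and Shi-type local derivative estimates (Theorem 11.1.3 also controls $\nabla\Rm$, $\nabla^2\Rm$, or one invokes standard interior estimates on the parabolic neighbourhood once $|\Rm|$ is bounded and one has the a priori control from $(CN)_{r(\cdot)}$) give the $|\nabla\Rm|\le \bar Kr^{-3}$ and $|\nabla^2\Rm|\le \bar Kr^{-4}$ bounds. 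Item~(ii) is then the ``no local collapsing forces $\rho$ large'' statement: with $r=\rho(x,t)$ and $\vol\tilde B(\tilde x,t,r)\ge wr^3$, if $\rho(x,t)<\bar\rho\sqrt t$ with $\bar\rho$ small one runs the point-picking argument of Perelman 6.8 on $\tilde M$ to extract, after rescaling by $\rho(x,t)^{-2}$, an ancient $\kappa$-solution or a contradiction with $(CN)$ and the volume lower bound, exactly as in the proof of \cite[Theorem 11.1.6]{B3MP}; this fixes $\bar\rho(w)$.

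For item~(iii): part~(a) follows by integrating the evolution of volume, $\frac{d}{dt}V(t)=-\int_M R\,d\vol$, together with the pinching lower bound $R\ge -\tfrac{6}{4t+1}$ from \eqref{eq:pinching 1} (valid since the initial metric has normalised curvature), which gives $\frac{d}{dt}V(t)\le \tfrac{6}{4t+1}V(t)$ off the discrete set of singular times, while at singular times $g_+(t)\le g(t)$ forces $V$ not to increase; integrating yields $V(t)\le C t^{3/2}$, and finiteness of $V(t)$ for all $t$ is Remark~\ref{rem:finite volume}. Part~(b) is the thick-part convergence: for $x_n$ in the $w$-thick part of $(M,t_n^{-1}g(t_n))$, rescaled to $\hat g_n:=t_n^{-1}g(t_n)$ the curvature is controlled near $x_n$ by item~(i) (at scale comparable to $1$, using that $w$-thickness forces $\rho$ bounded below and then $\vol\tilde B\ge w'r^3$), one has uniform $\kappa$-noncollapsing at bounded scales from the $(r_k,\delta_k,\kappa_k)$-bubbling-off structure, and the surgeries disappear near the thick part for large time (as in Perelman 7.3 / \cite[\S 11]{B3MP}), so Hamilton's compactness theorem applies and gives smooth subconvergence of $(M,\hat g_n,x_n)$ to a complete pointed manifold $(M_\infty,g_\infty,x_\infty)$; the evolution equation together with the estimates forces $g_\infty$ to be a stationary (hence Einstein, hence by dimension~$3$ constant curvature) metric, and the normalisation $t^{-1}g(t)$ of a flow with $\partial_t g=-2\Ric$ selects sectional curvature $-1/4$; finiteness of volume of $g_\infty$ follows from part~(a) since $\vol_{\hat g_n}(M)=t_n^{-3/2}V(t_n)\le C$. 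The use of $M\not\cong\RR^3$ (hence no complete $\Rm\ge0$ metric) rules out the degenerate case where the limit is flat or a nonnegatively curved soliton.

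The main obstacle is part~(iii)(b), specifically ensuring that the no-local-collapsing and canonical-neighbourhood estimates are genuinely \emph{uniform} along the sequence $t_n\to\infty$ despite the noncompactness of $M$ and the presence of surgeries: one must argue, following \cite[\S 11.2]{B3MP} and Perelman 7.2--7.3, that near a $w$-thick point at time $t_n$ no surgery has occurred in a definite-size parabolic neighbourhood for $t_n$ large, and that the volume lower bound on the \emph{universal cover} (needed to apply item~(i)) is not lost --- this is exactly where passing to $\tilde M$ and invoking Lemma~\ref{lem:bd}-type bounded-geometry control on covers, together with incompressibility of the thick part (proved later, but one can bootstrap using only the volume ratio here), becomes essential.
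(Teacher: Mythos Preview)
Your proposal is correct and follows essentially the same route as the paper: both reduce items (i)--(ii) to Perelman \cite[6.3, 6.8, 7.3]{Per2} (equivalently Theorems~11.1.3 and~11.1.6 of \cite{B3MP}) applied on the universal cover via Addendum~\ref{add:existence}, and obtain (iii) from Perelman \cite[7.1--7.2]{Per2} as in \cite[\S 11.2]{B3MP}. One clarification: your closing worry about needing incompressibility to secure the universal-cover volume bound in (iii)(b) is unnecessary, since the $w$-thick hypothesis already gives $\vol B(x_n,t_n,r)\ge w r^3$ on $M$ itself, and the trivial inequality $\vol \tilde B(\tilde x,t,r)\ge \vol B(x,t,r)$ (noted in the paper) then feeds item~(i) directly---no bootstrap or appeal to Section~\ref{sec:incompressible} is required here.
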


\begin{proof} Note that $\vol \tilde B(\tilde x,t,r) \ge \vol B(x,t,r)$. Properties (i), (ii) 
with the stronger assumption $\vol B(x,t,r) \ge wr^3$ correspond to Perelman \cite[6.8, 7.3]{Per2}). For the extension to the universal cover see \cite[propositions 4.1, 4.2]{Bam:longtimeI}. We remark that we extend the curvature controls to the full ball, 
as in \cite[Sec. 11.2.3]{B3MP} (cf. \cite[Remark 11.2.12]{B3MP}). Property (iii) follows from Perelman \cite[7.1,7.2]{Per2}. For more details one can see Section 11.2 in \cite{B3MP}, using technical theorems 11.1.3 and 11.1.6. The assumption on the volume is used to prove that limits of rescaled parabolic neighbourhoods are hyperbolic (cf Proposition~11.2.3). 
\end{proof}

\begin{rem}
The hypothesis that $M$ is irreducible is not essential here, but since our Ricci flow with $(r(\cdot),\delta(\cdot))$-bubbling-off  
is defined for this situation, it makes sense to keep this assumption throughout.
\end{rem}

For later purposes, namely to prove that cuspidal tori in the appearing hyperbolic pieces are incompressible in $M$, we need the following 
improvement of Proposition~\ref{prop:thick thin}(iii)(b), which gives convergence of flows rather than metrics. With the notations of Proposition~\ref{prop:thick thin}, 
we define $g_{n} := t_{n}^{-1}g(t_{n})$ and $g_{n}(t):=t_{n}^{-1}g(tt_{n})$, the latter being a Ricci flow with bubling-off such that 
$g_{n}(1)=g_{n}$. If $g_{\hyp}$ denotes the `hyperbolic' metric of sectional curvature $-1/4$, then the Ricci flow $g_{\hyp}(t)$ 
satisfying $g_{hyp}(1)=g_{\hyp}$ is simply $g_{\hyp}(t)=tg_{\hyp}$. Consider $w>0$, $t_{n} \to \infty$ and  $x_{n}$ in the $w$-thick 
part of $(M,g_{n})$.  By Proposition~\ref{prop:thick thin} there exists a (sub)-sequence of $(M,g_{n},x_{n})$ converging  smoothly to  
$(H,g_{\hyp}, x_{\infty})$.  By relabeling, we can assume that the sequence converges. Then we have:
\begin{prop}\label{prop:thick thin persist} The sequence $(M\times[1,2], g_{n}(t),(x_{n},1))$ converges smoothly to 
$(H \times[1,2], g_{\hyp}(t),(x_{\infty},1))$.
\end{prop}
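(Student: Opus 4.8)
The idea is that smooth convergence of the time-slices $(M,g_n,x_n)\to(H,g_{\hyp},x_\infty)$ already comes packaged with uniform geometric control (curvature bounds, noncollapsing) near $x_n$ coming from Proposition~\ref{prop:thick thin}, and this control is exactly what is needed to promote convergence of metrics to convergence of the Ricci flows they generate. The first step is to record the a priori estimates: since $x_n$ lies in the $w$-thick part of $(M,g_n)$, Proposition~\ref{prop:thick thin}(ii) gives $\rho(x_n,t_n)\ge\bar\rho\sqrt{t_n}$, and then Proposition~\ref{prop:thick thin}(i) gives, for $t_n$ large, bounds $|\Rm|\le \bar K\, (\bar\rho^2 t_n)^{-1}$, together with bounds on $\nabla\Rm$ and $\nabla^2\Rm$, on the $g(t_n)$-ball $B(x_n,t_n,\bar\rho\sqrt{t_n})$. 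Rescaling by $t_n^{-1}$, this says that on a ball of fixed radius $\sim\bar\rho$ around $x_n$ in $(M,g_n)$ we have $|\Rm_{g_n}|\le C$ and $|\nabla\Rm_{g_n}|,|\nabla^2\Rm_{g_n}|$ bounded, uniformly in $n$; moreover $w$-thickness is a noncollapsing statement at that scale.

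\textbf{Main step: uniform persistence of these bounds under the flow on $[1,2]$.} We need the same curvature bounds not just at $t=t_n$ but on the parabolic region $B(x_n,t_n,\text{const}\sqrt{t_n})\times[t_n,2t_n]$, i.e.\ after rescaling on a fixed-size ball around $x_n$ for $t\in[1,2]$. This is exactly the content of Proposition~\ref{prop:thick thin}(i) applied at each time $t\in[t_n,2t_n]$: for such $t$ we still have $t\ge\bar T$, and one checks (using that $\rho(\cdot,\cdot)$ and the volume ratio vary in a controlled way, or directly invoking that part~(i) holds at every time with the distant-ball volume hypothesis that follows from noncollapsing) that $|\Rm|\le \bar K r^{-2}$ on $B(x_n,t,r)$ with $r$ comparable to $\sqrt{t_n}$. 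After rescaling by $t_n^{-1}$ this gives: $g_n(t)$ has $|\Rm_{g_n(t)}|\le C'$, $|\nabla^j\Rm_{g_n(t)}|\le C'_j$ ($j=1,2$) on $B_{g_n(1)}(x_n,\bar\rho/2)$ for all $t\in[1,2]$, uniformly in $n$, and the flow is unscathed there for $n$ large (surgeries happen where $R\approx h^{-2}$, far larger than the bounded curvature here, by the same argument as in Lemma~\ref{lem:boundedcurv}). Here I also record that distances do not collapse in time: by the curvature bound and $\partial_t g=-2\Ric$, the metrics $g_n(t)$ for $t\in[1,2]$ are uniformly bi-Lipschitz to $g_n(1)$ on this ball, so the ball $B_{g_n(1)}(x_n,\bar\rho/4)$ stays inside the good region for the whole time interval.

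\textbf{Conclusion via persistence/compactness.} With these uniform bounds there are two equivalent ways to finish. (a) Apply the Persistence Theorem 8.1.3 of~\cite{B3MP} with model the Ricci flow $g_{\hyp}(t)=tg_{\hyp}$: since $g_n(1)\to g_{\hyp}(1)$ smoothly on a ball of radius $\ge A$ around $x_n$ for every $A$ (this is the given convergence of time-slices), and since $g_n(\cdot)$ has a priori curvature bounds on the relevant parabolic neighbourhood $P(x_n,1,A,1)$, the theorem gives that $g_n(t)$ is $A^{-1}$-close to $g_{\hyp}(t)$ on $P(x_n,1,A-1,1)$; letting $A\to\infty$ gives the claimed smooth pointed convergence $(M\times[1,2],g_n(t),(x_n,1))\to(H\times[1,2],g_{\hyp}(t),(x_\infty,1))$. (b) Alternatively, by the uniform $C^2$-bounds on $\Rm$ and Shi's estimates applied on $[1+\eta,2]$, together with noncollapsing, a subsequence of $(M\times[1,2],g_n(t),(x_n,1))$ converges in $C^\infty_{\mathrm{loc}}$ to some Ricci flow $(H',g_\infty(t),(x'_\infty,1))$; the time-slice at $t=1$ is $(H,g_{\hyp},x_\infty)$ by hypothesis, and by forward uniqueness of the Ricci flow with bounded curvature (Chen–Zhu) $g_\infty(t)$ must equal $tg_{\hyp}$ on $[1,2]$; since the limit is independent of the subsequence, the whole sequence converges. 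I would present route~(a), since the Persistence Theorem is already the tool used in the proof of Theorem~\ref{thm:stability2} and the bounded-curvature, unscathedness and noncollapsing hypotheses have just been verified.

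\textbf{Expected main obstacle.} The one point requiring care is upgrading the curvature control from the single time $t_n$ to the whole parabolic region $[t_n,2t_n]$ with a radius that does not shrink to zero after rescaling — i.e.\ verifying that the hypotheses of Proposition~\ref{prop:thick thin}(i), especially the distant-ball volume lower bound $\vol\tilde B(\tilde x,t,r)\ge wr^3$, persist (with a possibly smaller $w$) for all $t\in[t_n,2t_n]$ and that no surgery intrudes. This is handled by combining $\kappa$-noncollapsing (which propagates forward in time) with the curvature bound at the initial time and a short-time continuity/bootstrap argument of the same flavour as Lemma~\ref{lem:boundedcurv}; once that is in place, the Persistence Theorem does the rest mechanically.
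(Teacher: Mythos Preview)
Your overall plan---verify unscathedness and a uniform curvature bound on the parabolic region, then invoke the Persistence Theorem with model $g_{\hyp}(t)=tg_{\hyp}$---is exactly the paper's route. However, the ``main step'' is where the real content lies, and your justification of it has two genuine gaps.

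\textbf{Unscathedness.} The analogy with Lemma~\ref{lem:boundedcurv} breaks down. In that lemma, once one shows the surgery spheres (where $R\approx h^{-2}$) are disjoint from the region of interest, the conclusion follows from the \emph{topological} fact that a cusp of $M$ cannot be contained in a $3$-ball of $M$. Here no such obstruction is available: the ball $B(x_n,t_n,\rho\sqrt{t_n})$, despite having bounded curvature, could a priori be swallowed by a surgery $3$-ball $B$ whose boundary sphere lies far away at high curvature. The paper rules this out by a volume argument: if this happens, the pre-surgery volume of $B$ is at least of order $T_n^{3/2}$ (it contains an almost-hyperbolic piece), while the post-surgery volume is of order $h^3(T_n)$, so the rescaled volume $\vol_{t^{-1}g(t)}(M)$ drops by a definite amount $\mu/4>0$; since this rescaled volume is monotone nonincreasing and positive, such events occur only finitely often.

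\textbf{Curvature persistence.} Your bootstrap is circular as written, and the appeal to $\kappa$-noncollapsing is at the wrong scale: Definition~2.4 gives noncollapsing only on scales $\le 1$ for the unrescaled metric, whereas you need a volume lower bound at scale $\sim\sqrt{t_n}$. The paper closes the loop differently. One defines $T_n\in[t_n,2t_n]$ maximal such that the ball is unscathed and $|2t\Ric+g|\le 10^{-6}$ on $[t_n,T_n]$; this condition itself implies $|\Rm_{g_n}|\le 1$, so the Persistence Theorem applies up to $T_n$ and shows $x_n$ lies in the $w'$-thick part of $(M,T_n^{-1}g(T_n))$ for some fixed $w'>0$. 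Now one invokes Proposition~\ref{prop:thick thin}(iii)(b) \emph{at time $T_n$}: the rescaled metric there is close to hyperbolic, hence $|2t\Ric+g|<10^{-6}$ strictly, and (together with the volume argument for unscathedness) maximality forces $T_n=2t_n$. The key mechanism you are missing is that the bootstrap closes not via $\kappa$-noncollapsing but by reapplying the hyperbolic-limit statement (iii)(b) at the later time.
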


\begin{proof} 
We need to show that, for all $A>0$, for all $n$ large enough, the rescaled parabolic ball $B(\bar x_{n},1,A)\times [1,2]$ is $A^{-1}$-close 
to $B(x_{\infty},1,A) \times [1,2]$. In what follows we put a bar on $x_n$ to indicate that the ball 
is w.r.t $g_n(t)$.\\

We use the Persistence Theorem \cite[Theorem 8.1.3]{B3MP}, the hyperbolic limit $(H \times[1,2], g_{\hyp}(t),(x_{\infty},1))$ being the model $\calm_{0}$ in the sense of \cite[page 89]{B3MP}. Fix $A>1$ and let $\rho:=\rho(\calm_{0},A,1)\geq A$ be the parameter from the Persistence Theorem. By definition 
of $(H,g_{\hyp}, x_{\infty})$, note that $(B(\bar x_{n},1,\rho),g_{n})$ is $\rho^{-1}$-close to $(B(x_{\infty},1,\rho),g_{\hyp})$ for all sufficiently large $n$, satisfying 
assumption (ii) of \cite[Theorem 8.1.3]{B3MP}. To verify the other assumptions, we adapt arguments of \cite[Lemma 88.1]{Kle-Lot:topology} 
to our situation. In particular we have to take care of hyperbolic pieces appearing in a large $3$-ball affected by a metric surgery.
This is ruled out by a volume argument.\\

So we consider for each $n$, $T_{n} \in [t_n,2t_n]$ maximal such that 
\begin{enumerate}[(i)]
\item $B(x_{n},t_{n},\rho\sqrt{t_{n}}) \times [t_{n},T_{n}]$ is unscathed,
\item $|2t\Ric(x,t) + g(x,t)|_{g(t)} \leqslant 10^{-6}$ there.
\end{enumerate}

The case $T_{n}=t_n$, where $t_{n}$ is a singular time and a surgery affects the ball just at that time, is not \emph{a priori} excluded.
Note that (ii) implies $|\Rm_{g_{n}}| \leqslant 1$ on the considered neighbourhood: one has 
$\Ric_{g(t)} \approx -\frac{1}{2t}g(t)$ for $t \in [t_n,T_n]$, or $\Ric_{g(tt_{n})} \approx -\frac{1}{2tt_{n}}g(tt_{n})$ for $t\in[1,T_n/t_n]$, and then 
$\Ric_{g_{n}(t)} = \Ric_{t_{n}^{-1}g(tt_{n})} \approx -\frac{1}{2tt_{n}}g(tt_{n}) = -\frac{1}{2t}g_{n}(t)$. Thus the sectional 
curvatures of $g_{n}(t)$ remain in $[-\frac{1}{4}-\frac{1}{100},-\frac{1}{8}+\frac{1}{100}]$ for $A$ large enough.

We let $\bar T_n:=T_n/t_n \in [1,2]$ denote the rescaled final time. The assumptions of \cite[Theorem 8.1.3]{B3MP} being satisfied on $B(\bar x_{n},1,\rho)\times [1,\bar T_n]$, the conclusion holds on 
 $B(\bar x_{n},1,A)\times [1,\bar T_n]$, that is $(B(\bar x_{n},1,A)\times [1,\bar T_n],g_n(t))$ is $A^{-1}$-close to 
$(B(x_{\infty},1,A) \times [1,\bar T_n],g_{\hyp}(t))$.
 
\begin{claim}\label{claim:two} For all $n$ large enough, $\bar T_n = 2$. 
\end{claim}
\begin{proof}[Proof of Claim~\ref{claim:two}]

 We first prove that there are at most finitely many integers $n$ such that $T_{n}$ 
is a singular time where $B(x_{n},t_{n},\rho\sqrt{t_{n}})$ is scathed, that 
is $g_{+}(x,T_{n})\not=g(x,T_{n})$ for 
some $x\in B(x_{n},t_{n},\rho\sqrt{t_{n}})$.  

We first describe the idea of the proof. Assume that $T_{n}$ is such a singular time. By 
definition of our $(r,\delta)$-surgery, there is a surgery $3$-ball $B\ni x$ 
whose boundary $\partial B$ is the middle sphere of a strong $\delta$-neck 
with scalar curvature $\approx h^{-2}(T_{n})>>0$, where $h(T_{n})$ is the cutoff parameter 
at time $T_n$. By assumption (ii) above, $R<0$ at time $T_{n}$ on 
$B(x_{n},t_{n},\rho\sqrt{t_{n}})$, hence $\partial B \cap B(x_{n},t_{n},\rho\sqrt{t_{n}}) 
= \emptyset$. It follows that 
$B(x_{n},t_{n},\rho\sqrt{t_{n}}) \subset B$, which is an almost standard 
cap for $g_{+}(T_{n})$. For the pre-surgery metric, the persistence theorem implies 
that $(B(x_{n},t_{n},A\sqrt{t_{n}}),g(T_n))$ is almost homothetic to 
a (large) piece of the hyperbolic manifold $H$. Hence the surgery shrinks this piece to a 
small standard cap, decreasing volume by a definite amount. As moreover $t^{-1}g(t)$ is 
volume decreasing along time, volume would become negative if there were too many such singular times, yielding a contradiction. We now 
go into the details.
 
 Let $\mu >0$ be the volume of the unit ball in $(H,g_\hyp(1))$ centred at $x_\infty$, $B_\hyp:= B(x_\infty,1,1)$. 
 For any $t\geqslant 1$ we then have $\vol_{g_{\hyp}(t)}(B_\hyp)=t^{3/2}\vol_{g_{\hyp}}(B_\hyp) = t^{3/2}\mu$. We assume $A>1$, so that for $n$ large enough, by closeness at time $\bar T_n$ between $g_n(\cdot)$ and $g_\hyp(\cdot)$ we have: 
$$\vol_{g_{n}(\bar T_n)}(B(\bar x_{n},1,A)) \geqslant \frac{1}{2} \vol_{g_{\hyp}(\bar T_n)} (B_\hyp) = (\bar T_n)^{3/2}\frac{\mu}{2}.$$
Assume that $T_n$ is a singular time such that  $g_{+}(x,T_{n})\not=g(x,T_{n})$ for 
some $x\in B(x_{n},t_{n},\rho\sqrt{t_{n}})$ and let  $B\ni x$ be a surgery $3$-ball as 
discussed above.
As $B$ contains $B(\bar x_{n},1,\rho)$ and  
$\rho \geq A$, we also have  
$$\vol_{g_{n}(\bar T_n)} (B) > \bar T_n^{3/2}\frac{\mu}{2}.$$ 
For the unscaled metric $g(T_n)=t_ng_n(T_n/t_n)=t_ng_n(\bar T_n)$ we then have, before surgery, 
$\vol_{g(T_n)}(B) = t_n^{3/2} \vol_{g_{n}(\bar T_n)}(B) \geq (t_n\bar T_n)^{3/2}\frac{\mu}{2}=T_n^{3/2}\frac{\mu}{2}$. 
After surgery, $\vol_{g_{+}(T_{n})}(B)$ is comparable to $h^{3}(T_{n})$. Computing the difference of volumes gives: 
\begin{eqnarray*}
 \vol_{g_{+}(T_{n})}(B) - \vol_{g(T_{n})}(B) & \leq & c.h^{3}(t'_{n}) -  T_n^{3/2}\frac{\mu}{2} < -T_n^{3/2} \frac{\mu}{4}.
\end{eqnarray*}
for all $n$ large enough. 
Since $g_{+}(t) \leqslant g(t)$ on the whole manifold, we have 
\begin{equation}
 \vol_{g_{+}(T_{n})}(M) - \vol_{g(T_{n})}(M)  <  -T_n^{3/2} \frac{\mu}{4}, \label{eq:volume}
\end{equation}
for all $n$ large enough. 
Now the proof of \cite[Proposition 11.2.1]{B3MP} shows that $(t+\frac{1}{4})^{-1}g(t)$ is volume non-increasing along a smooth Ricci flow. 
Since  $g_{+}\leqslant g$ at singular times, this monotonicity holds for a Ricci flow with bubbling-off. One easily deduces by comparing 
the $(t+1/4)^{-1}$ and the $t^{-1}$ scaling that $t^{-1}g(t)$ is also volume decreasing. Precisely, let us now set 
$\bar g(t):=t^{-1}g(t)$, then for all $t' > t$: 
$$  \vol_{\bar g(t')}(M) \leqslant \left(\frac{t'+1/4}{t'}  \frac{t}{t+1/4}   \right)^{3/2} \vol_{\bar g(t)}(M)<\vol_{\bar g(t)}(M).$$
It particular, the sequence $\vol_{\bar g(t_{n})} (M)$ is decreasing. Moreover, if $[t_{n},t_{m}]$ contains a singular time $T_{n}$ as above, then using \eqref{eq:volume} in the second inequality, we get:
$$  \vol_{\bar g(t_m)}  (M)   \leqslant  \vol_{\bar g_{+}(T_{n})}(M) 
  <   \vol_{\bar g(T_{n})}   (M) -\frac{\mu}{4} 
 \leqslant   \vol_{\bar g(t_{n})}  (M) -\frac{\mu}{4}. 
$$
On the other hand, $\vol_{\bar g(t_n)}(M) >0$. Thus there are at most finitely many such singular times. We conclude that 
$B(x_{n},t_{n},\rho\sqrt{t_{n}})$ is unscathed at time $T_{n}$ for all $n$ large enough.\\

From now on we suppose $n$ large enough such that $B(x_{n},t_{n},\rho\sqrt{t_{n}}) \times [t_n,T_n]$ is unscathed. Recall that singular times form a discrete subset of $\RR$, hence there exists $\sigma_{n} >0$ such that $B(x_{n},t_{n},\rho\sqrt{t_{n}})$ is unscathed on $[t_{n},T_n+\sigma_{n}]$. By maximality of $\bar T_n$, when 
$\bar T_n < 2$ we must have $|2t\Ric(x,t) + g(x,t)|_{g(t)} = 10^{-6}$ at time $T_n$ for some 
$x\in\overline{B(x_{n},t_{n},\rho\sqrt{t_{n}})}$. Otherwise 
by continuity we find $\sigma_{n}$ small enough such that (ii) holds on $[t_{n},T_n+\sigma_{n}]\subset [t_{n},2t_{n}]$, contradicting the maximality of $\bar T_n$.\\

We now show that for all large $n$, $|2t\Ric(x,t) + g(x,t)|_{g(t)} < 10^{-6}$ at time $T_{n}$ on $\overline{B(x_{n},t_{n},\rho\sqrt{t_{n}})}$, which will imply that $\bar T_n =2$ by the discussion above. Using the $A^{-1}$-closeness of the rescaled parabolic ball $B(\bar x_{n},1,A) \times [1,\bar T_n]$ with $B(\bar x_{\infty},1,A) \times [1,\bar T_n]$, one can check 
 that $x_n$ is in the $w'$-thick part of $(M,{T_n}^{-1}g(T_n))$, for some fixed $w'>0$, for all $n$ large enough. Proposition 
\ref{prop:thick thin}(b) then implies that ${T_n}^{-1}g(T_n)$ becomes arbitrarily close to being hyperbolic on any fixed 
ball (w.r.t ${T_n}^{-1}g(T_n)$) centred at $x_n$, when $n \to \infty$. Controlling the distortion of distances on 
$B(x_n,t_n,\rho\sqrt{t_{n}})\times [t_n, T_n]$ (with the estimates (ii)), one can conclude that  
$|2t\Ric(x,t) + g(x,t)|_{g(t)} < 10^{-6}$ on $\overline{B(x_n,t_n,\rho\sqrt{t_{n}})}$ at time $T_n$ for $n$ large enough. The details are left to the reader.  Together with the first part of the proof and the maximality of $\bar T_{n}$, this implies that $\bar T_{n} = 2$ for $n$ large enough, proving  Claim~\ref{claim:two}.
\end{proof}

As already noted, we then have, by the Persistence Theorem, that $B(x_{n},1,A)\times [1,2]$, with the rescaled flow $g_{n}(t)$, is $A^{-1}$-close to $B(x_{\infty},1,A) \times [1,2]$ for all $n$ large enough. This concludes the proof of Proposition~\ref{prop:thick thin persist}.
\end{proof}

From Proposition \ref{prop:thick thin persist} one easily obtains:

\begin{corol}\label{cor:persist} Given $w >0$ there exist a number $T=T(w)>0$ and a nonincreasing function $\beta = \beta_w : [T,+\infty) \to (0,+\infty)$ tending to $0$ at $+\infty$ such that if $(x,t)$ is in the $w$-thick part of $(M,t^{-1}g(t))$ with $t \ge T$, then there exists 
a pointed hyperbolic manifold $(H,g_{\hyp},\ast)$ such that:
\begin{enumerate}[(i)]
\item $P(x,t,\beta(t)^{-1}\sqrt{t},t)$ is $\beta(t)$-homothetic to $P(\ast,1,\beta(t)^{-1},1) \subset H\times [1,2]$, endowed with 
$g_{\hyp}(s)=sg_{\hyp}(1))$,
\item For all $y \in B(x,t,\beta(t)^{-1}\sqrt{t})$ and $s \in [t,2t]$, 
$$ \lVert \bar g(y,s) - \bar g(y,t) \rVert < \beta,$$
where the norm is in the $C^{[\beta^{-1}]}$-topology w.r.t the metric $\bar g(t)=t^{-1}g(t)$.
\end{enumerate}
\end{corol}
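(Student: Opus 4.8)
The plan is to deduce the corollary from Proposition~\ref{prop:thick thin persist} by a compactness/contradiction argument, and then to manufacture the nonincreasing function $\beta_w$ by hand from a sequence of thresholds. First I would reduce to the following claim: for every $\epsi>0$ there is a time $T_\epsi>0$, depending only on $w$, such that whenever $t\ge T_\epsi$ and $(x,t)$ lies in the $w$-thick part of $(M,t^{-1}g(t))$, there exists a pointed complete finite volume hyperbolic manifold $(H,g_\hyp,\ast)$ of sectional curvature $-1/4$ such that $P(x,t,\epsi^{-1}\sqrt t,t)$ is $\epsi$-homothetic to $P(\ast,1,\epsi^{-1},1)\subset H\times[1,2]$ (with $g_\hyp(s)=sg_\hyp(1)$), and for all $y\in B(x,t,\epsi^{-1}\sqrt t)$ and $s\in[t,2t]$ one has $\lVert\bar g(y,s)-\bar g(y,t)\rVert_{C^{[\epsi^{-1}]}(\bar g(t))}<\epsi$. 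Granting the claim, fix any sequence $\epsi_k\downarrow 0$, set $T_k:=\max_{j\le k}T_{\epsi_j}$ (nondecreasing), $T:=T_1$, and let $\beta_w$ be the step function equal to $\epsi_k$ on $[T_k,T_{k+1})$. Then $\beta_w\colon[T,\infty)\to(0,\infty)$ is nonincreasing with limit $0$, and for $t\ge T$, applying the claim with $\epsi=\beta_w(t)$ (which is some $\epsi_k$ with $t\ge T_k\ge T_{\epsi_k}$) yields exactly conclusions (i) and (ii).

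To prove the claim I would suppose it fails for some $\epsi_0>0$, so that there are $t_n\to\infty$ and points $x_n$ with $(x_n,t_n)$ in the $w$-thick part of $(M,t_n^{-1}g(t_n))$ for which no pointed hyperbolic manifold has the two properties at level $\epsi_0$ for the index $n$ (if necessary the flow itself may be allowed to vary with $n$, since all constants below depend only on $w$). By Proposition~\ref{prop:thick thin persist}, after passing to a subsequence the rescaled flows $(M\times[1,2],g_n(t),(x_n,1))$ converge smoothly to $(H\times[1,2],g_\hyp(t),(x_\infty,1))$, where $g_n(t)=t_n^{-1}g(tt_n)$, $g_\hyp$ is hyperbolic of curvature $-1/4$, and $g_\hyp(t)=tg_\hyp(1)$. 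Taking $A:=\epsi_0^{-1}$, for all large $n$ the parabolic region $B(\bar x_n,1,A)\times[1,2]$ (with respect to $g_n(\cdot)$) is $A^{-1}$-close to $B(x_\infty,1,A)\times[1,2]$. Since $g_n(\tau)=t_n^{-1}g(\tau t_n)$, we have $B_{g_n(1)}(x_n,\epsi_0^{-1})=B(x_n,t_n,\epsi_0^{-1}\sqrt{t_n})$ and the $g_n$-time interval $[1,2]$ corresponds to the $g$-time interval $[t_n,2t_n]$; rescaling back, this closeness is precisely $\epsi_0$-homothety of $P(x_n,t_n,\epsi_0^{-1}\sqrt{t_n},t_n)$ to $P(x_\infty,1,\epsi_0^{-1},1)\subset H\times[1,2]$, which is the first required property.

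For the $C^k$ estimate I would use that $\bar g(t)=t^{-1}g(t)$ is invariant under parabolic rescaling: writing $s=\tau t_n$ with $\tau\in[1,2]$ gives $\bar g(y,s)=s^{-1}g(y,s)=\tau^{-1}g_n(y,\tau)$ and $\bar g(y,t_n)=g_n(y,1)$. Because the limit satisfies $\tau^{-1}g_\hyp(\tau)=g_\hyp(1)$ for every $\tau$, the smooth convergence $g_n(\cdot)\to g_\hyp(\cdot)$ on $[1,2]$ forces $\tau^{-1}g_n(y,\tau)-g_n(y,1)\to 0$ in $C^\infty$ on compact sets, uniformly in $\tau\in[1,2]$; hence for $n$ large the $C^{[\epsi_0^{-1}]}(\bar g(t_n))$-norm of $\bar g(y,s)-\bar g(y,t_n)$ is $<\epsi_0$ for all $y\in B(x_n,t_n,\epsi_0^{-1}\sqrt{t_n})$ and $s\in[t_n,2t_n]$. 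Thus $(H,g_\hyp,x_\infty)$ satisfies both properties at level $\epsi_0$ for all large $n$, a contradiction, which proves the claim. This argument is essentially bookkeeping layered on top of Proposition~\ref{prop:thick thin persist}; the only points to watch are that spatial radii rescale by $\sqrt{t_n}$ while time extents rescale by $t_n$, and that the thresholds $T_\epsi$ are uniform over admissible flows, which holds because every constant entering Propositions~\ref{prop:thick thin} and~\ref{prop:thick thin persist} depends only on $w$.
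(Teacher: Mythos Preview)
Your argument is correct and is precisely the standard contradiction/compactness bookkeeping the paper intends; the paper itself gives no proof, only the remark that the corollary follows easily from Proposition~\ref{prop:thick thin persist}. One small overclaim to drop: you assert the thresholds $T_\epsi$ are ``uniform over admissible flows'' because all constants depend only on $w$, but the proof of Proposition~\ref{prop:thick thin persist} uses volume arguments specific to the given flow, so this is not obviously true---and it is also unnecessary, since the corollary is stated for a fixed $M$ and $g(\cdot)$, so $T(w)$ and $\beta_w$ may tacitly depend on these. Remove the parenthetical about letting the flow vary with $n$ and the final clause about uniformity, and the proof is clean.
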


%

\section{Incompressibility of the boundary tori}\label{sec:incompressible}
We prove that under the hypotheses of the previous section the tori that separate the thick part from the thin part are incompressible.

More precisely, we consider $M$ nonspherical, irreducible, not homeomorphic to $\RR^3$, endowed with a complete finite volume Ricci flow with 
$(r(\cdot),\delta(\cdot))$-bubbling-off $g(\cdot)$ such that $\delta(\cdot)\le \bar\delta(\cdot)$, \emph{and whose universal cover has bounded geometry} (for each time slice).  
We call \bydef{hyperbolic limit} a pointed `hyperbolic' manifold of finite volume and sectional curvature $-1/4$ that appears as a pointed limit of $(M,t_n^{-1} g(t_n),x_n)$ for 
some sequence $t_n\to\infty$. {\bf In this section, we assume the existence of at least one hyperbolic limit $(H,g_\hyp,*)$, 
which is supposed not to be closed.}\\

Given a hyperbolic limit $H$, we call \bydef{compact core of $H$},  a compact submanifold $\bar H \subset H$ whose complement consists of finitely many product neighbourhoods of the cusps. Then for large $n$, we have an approximating embedding $f_n:\bar H\to M$ which is almost isometric with respect to the metrics $g_\hyp$ and $t_n^{-1} g(t_n)$. The goal of this section is to prove the following result:

\begin{prop}\label{prop:incompressible}	
If $n$ is large enough, then for each component $T$ of $\bord \bar H$,  the image $f_n(T)$ is incompressible in $M$.
\end{prop}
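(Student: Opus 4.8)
The plan is to argue by contradiction, following Perelman's strategy (see \cite[7.3--7.4]{Per2} and \cite[Section 11.3]{B3MP}) but taking care to work on the universal cover where bounded geometry is available. Suppose that for some component $T$ of $\bord\bar H$ and for infinitely many $n$, the map $f_n|_T$ is not incompressible. Then there is a nontrivial loop $\gamma\subset T$ bounding an immersed disk in $M$. Among all such compressing disks choose one of least area with respect to $t_n^{-1}g(t_n)$; by a minimal surface existence argument (Meeks--Yau, or the Morrey solution of Plateau) we may take it to be a smooth stable minimal disk $D_n$, and the least area $\mathcal A_n := \mathrm{area}_{t_n^{-1}g(t_n)}(D_n)$ is a well-defined finite quantity. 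The core of the proof is a differential inequality for the evolution of $\mathcal A(t)$ along the Ricci flow with bubbling-off. Define $A(t)$ to be the infimum of areas of disks in $M$ spanning a loop isotopic to $f_t(\gamma)$ in the thick part; using the second variation of area and the Gauss equation, together with the fact that $R\ge -6/(4t+1)$ from the pinching estimate \eqref{eq:pinching 1}, one obtains in the barrier (forward difference quotient) sense
\[
\frac{d}{dt}A(t)\ \le\ -2\pi + \left(\tfrac{3}{2}\cdot\tfrac{1}{t}+o(1/t)\right)A(t),
\]
the $-2\pi$ coming from Gauss--Bonnet applied to the minimal disk and the curvature term from $\Ric$ controlled via the thick-part estimates of Proposition~\ref{prop:thick thin}(i) on the universal cover. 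This type of estimate is classical (Hamilton, Perelman 7.4); one just needs to check it survives surgeries, which holds because $g_+(t)\le g(t)$ so area can only drop at singular times, and because the relevant disk stays in the thick region, away from surgery $3$-balls (exactly the volume/curvature separation used in the proof of Claim~\ref{claim:two}).

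Next I would extract the contradiction from this inequality. Integrating $\frac{d}{dt}(t^{-3/2}A(t)) \le -2\pi t^{-3/2}(1+o(1))$ shows that $t^{-3/2}A(t)$ is eventually strictly decreasing and in fact $A(t) < c\sqrt t$ for all large $t$ and some explicit $c$ depending only on the geometry, indeed $\limsup_{t\to\infty} A(t)/\sqrt t$ can be made smaller than any prescribed $\epsilon>0$ once we know $A(t)$ is finite at one large time. On the other hand, the geometry of the hyperbolic limit forces a \emph{lower} bound: for $n$ large, $f_n$ is almost isometric, so the loop $f_n(\gamma)$ has $t_n^{-1}g(t_n)$-length bounded above by (roughly) the $g_\hyp$-length of $\gamma$, while its translates under $\pi_1$ in $(\tilde M,\tilde g(t_n))$ are forced apart by the margulis-type collar around the cusp torus $T$; a compressing disk for $\gamma$ lifts to a disk in $\tilde M$ whose boundary is a loop that must traverse the thick part, and stability of the cusp-like structure (Theorem~\ref{thm:cusp-like}) together with the bounded geometry of $\tilde M$ gives a lower area bound proportional to $t_n$ (the area is computed in $g(t_n)$, i.e.\ comparable to $t_n$ times a fixed hyperbolic area). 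Rescaling, $\mathcal A_n \ge \mathrm{const}\cdot\sqrt{t_n}$ eventually contradicts the decay $A(t)=o(\sqrt t)$.

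More carefully, to get the lower bound I would argue: if $f_n(T)$ were compressible, then by the loop theorem the kernel of $\pi_1(T)\to\pi_1(M)$ is nontrivial, so some primitive $\gamma$ maps to a null-homotopic (indeed trivial, since we may choose $\gamma$ shortest) element; but then, passing to $\tilde M$ and using that $\tilde C$ is simply connected with an incomplete negatively curved metric (as in Lemma~\ref{lem:bd}), the lift of a spanning disk must leave every cusp neighbourhood and cross a region of definite size in the thick part, where curvature is pinched and volume is controlled, yielding $\mathrm{area}\ge\delta_0>0$ in the rescaled metric $t_n^{-1}g(t_n)$ \emph{independently of }$n$. This contradicts $A(t_n)\to 0$ in the rescaled picture.

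\textbf{Main obstacle.} The hard part will be making the monotonicity of least-area rigorous across surgery times and in the noncompact setting: one must verify that the minimal disk realizing $A(t)$ can be taken to avoid the surgery $3$-balls and the cusp ends (so that the curvature term in the second-variation estimate is controlled by Proposition~\ref{prop:thick thin}(i) rather than by the large curvature near surgeries), and that the infimum of areas is actually attained or attained up to $\epsilon$ by a disk in a region with bounded geometry — this is where the bounded geometry of $\tilde M$, the stability of the cusp-like structure, and the separation of surgery regions from the thick part (via the volume comparison of Claim~\ref{claim:two}) all have to be combined. Once the differential inequality $\frac{d}{dt}A \le -2\pi + \frac{3}{2t}A + o(A/t)$ is established in the barrier sense globally on $[\,T,\infty)$, the contradiction with the $\sqrt t$-scaling of the hyperbolic-limit geometry is immediate.
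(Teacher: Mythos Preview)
Your overall strategy---Hamilton's monotonicity argument for the area of a least-area compressing disk---is the correct one and is what the paper does, but the execution has two genuine gaps that break the proof as written.

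\medskip

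\textbf{Wrong coefficient, wrong contradiction.} The curvature term in the area evolution does \emph{not} come from thick-part estimates; it comes from the \emph{global} scalar curvature lower bound $R\ge -6/(4t+1)$ of \eqref{eq:pinching 1}. For a minimal disk the relevant integrand is $-\tfrac12 R$, so the correct barrier inequality is
\[
\frac{d}{dt^+}A(t)\ \le\ \frac{3}{4}\cdot\frac{1}{t+\tfrac14}\,A(t)\ -\ 2\pi\ +\ D,
\]
with coefficient $\tfrac34$, not $\tfrac32$. This distinction is decisive: with $\tfrac34$ the integrating factor is $(t+\tfrac14)^{-3/4}$, the integral $\int(t+\tfrac14)^{-3/4}\,dt$ \emph{diverges}, and for any $D<2\pi$ one obtains $A(t)<0$ for large $t$, which is already the contradiction (no lower bound is needed beyond $A>0$). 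With your coefficient $\tfrac32$ the integral $\int t^{-3/2}\,dt$ converges, and your integration gives only $A(t)\le Ct^{3/2}$, certainly not $A(t)<c\sqrt t$ as you assert. Your fallback plan---a lower bound $A(t_n)\gtrsim \sqrt{t_n}$ from the hyperbolic geometry---is both unnecessary (with the right coefficient) and unjustified: the compressing disk is forced to live mostly in the \emph{thin} part (the thick part is almost hyperbolic, where $T$ is incompressible), so there is no evident uniform lower bound on its rescaled area.

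\medskip

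\textbf{Missing boundary term.} You omit the contribution of $\int_{\partial D}\kappa\,ds$ and of moving $\partial D$ along the evolving torus. This produces the $+D$ term above, and the whole argument collapses unless one can arrange $D<2\pi$. The paper (following Hamilton and \cite[Lemma~91.11]{Kle-Lot:topology}) does this by choosing the horospherical torus $\mathbf T\subset\partial\bar H_a$ of sufficiently small diameter $a$; without this step the $-2\pi$ from Gauss--Bonnet can be swamped.

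\medskip

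Two smaller corrections. First, the disk does \emph{not} stay in the thick region; what saves the inequality is that the scalar curvature bound is global and that $g_+\le g$ at singular times (which you do note). Second, for existence of the Meeks--Yau minimiser in the noncompact setting the paper uses a dichotomy you should reproduce: either the component $X_t$ on the compressing side is a solid torus (Meeks--Yau applies directly), or $Y_t$ lies in a $3$-ball, in which case one lifts isometrically to $\tilde M$ and invokes Meeks--Yau there using the bounded geometry of $\tilde M$.
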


We argue following Hamilton's paper \cite{hamilton:nonsingular}. 
A key tool is the stability of the hyperbolic limit $H$: it is a limit along the flow, not just along a sequence of times. We give 
a statement following Kleiner-Lott (cf.~\cite[Proposition 90.1]{Kle-Lot:topology}.)
\begin{prop}[Stability of thick part]\label{prop:stability}
There exist a number $T_0>0$, a nonincreasing function $\alpha:[T_0,+\infty) \to (0,+\infty)$ tending to $0$ at $+\infty$, 
a finite collection $\{(H_{1},\ast_{1}),\ldots,(H_{k},\ast_{k})\}$ of hyperbolic limits 
and a smooth family of smooth maps 
$$f(t) : B_{t} =  \bigcup_{i=1}^{k}B(*_{i},\alpha(t)^{-1}) \to M$$
 defined for $t\in [T_0,+\infty)$, such that
\begin{enumerate}[(i)]
\item The $C^{[\alpha(t)^{-1}]}$-norm of $t^{-1} f(t)^* g(t) - g_\hyp$ is less than $\alpha(t)$;
\item For every $t_0\ge T_0$ and every $x_0\in B_{t_{0}}$, the time-derivative at $t_0$ of the 
function $t\mapsto f(t)(x_0)$ is less than $\alpha(t_{0}) t_{0}^{-1/2}$.
\item $f(t)$ parametrises more and more of the thick part: the $\alpha(t)$-thick part of $(M,t^{-1}g(t))$ is contained in $\im(f(t))$.
\end{enumerate}
\end{prop}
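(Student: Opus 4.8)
The plan is to upgrade Proposition~\ref{prop:thick thin persist} — which already gives convergence along an \emph{arbitrary} sequence $t_n\to\infty$ — into a statement that holds for \emph{all} large $t$, with a single finite collection of model hyperbolic limits and a smooth-in-$t$ family of maps. First I would fix the hyperbolic limits. Pick $w_0>0$ small. For each $w\le w_0$, Proposition~\ref{prop:thick thin}(iii)(b) says any sequence $(M,t_n^{-1}g(t_n),x_n)$ with $x_n$ in the $w$-thick part subconverges to a complete finite-volume hyperbolic manifold of curvature $-1/4$; since the volume $V(t)$ is non-increasing (by the $(t+1/4)^{-1}$-monotonicity argument recalled in the proof of Claim~\ref{claim:two}) and $\lim_{t\to\infty} V(t)t^{-3/2}$ exists, the total volume of any disjoint union of hyperbolic limits appearing at a given scale is bounded by $\lim V(t)t^{-3/2}$, and by Mostow rigidity / finite-volume considerations there are only finitely many isometry types $H_1,\dots,H_k$ that can arise; fix one basepoint $*_i\in H_i$ in the thick part of each. (One also needs that, eventually, \emph{every} thick basepoint is close to one of these finitely many models; this is the standard ``no new pieces appear'' statement, proved by the volume-monotonicity bound — a new limit would contribute extra volume that the decreasing $V(t)t^{-3/2}$ cannot afford.)

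Next I would construct $f(t)$. Given the finite list $\{(H_i,*_i)\}$, for each large $t$ and each $i$, the conclusion of Proposition~\ref{prop:thick thin persist} (via Corollary~\ref{cor:persist}) produces, for each thick point, an almost-isometric embedding of a large ball of $H_i$ into $(M,t^{-1}g(t))$; a chaining / local-rigidity argument (harmonic maps, or the ``rotating and translating the identity to minimize distortion'' device of Hamilton \cite{hamilton:nonsingular}) pins this embedding down uniquely up to a shrinking error, so that it depends continuously, then smoothly, on $t$ — here one invokes the Persistence Theorem \cite[Theorem 8.1.3]{B3MP} on parabolic neighbourhoods $P(*_i,1,\alpha(t)^{-1},1)$ to get the flow-closeness rather than just time-slice closeness, exactly as in Proposition~\ref{prop:thick thin persist}. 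Differentiating the defining (almost-)isometry relation $t^{-1}f(t)^*g(t)\approx g_\hyp$ in $t$ and using $\partial_t(t^{-1}g(t)) = t^{-1}(-2\Ric - t^{-1}g)\approx 0$ on the thick part (the estimate $|2t\Ric+g|\le 10^{-6}$ from the proof of Claim~\ref{claim:two}) gives the bound $|\partial_t f(t)(x_0)| < \alpha(t_0)t_0^{-1/2}$ of item~(ii). Property~(i) is immediate from the construction. Property~(iii), that $f(t)$ eventually covers the entire $\alpha(t)$-thick part, follows from the uniform closeness to the models: any $\alpha(t)$-thick point lies, by Proposition~\ref{prop:thick thin}(iii)(b) and the finiteness just established, in the region parametrised by some $f(t)|_{B(*_i,\alpha(t)^{-1})}$ once $\alpha(t)$ is small enough.

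The main obstacle is the smoothness and uniqueness of the family $f(t)$: Proposition~\ref{prop:thick thin persist} only gives convergence of maps defined separately for each sequence, with no coherence as $t$ varies. To glue these into one smooth family one must select, among all approximate isometries $B(*_i,R)\to(M,t^{-1}g(t))$, a canonical one — this is where the hyperbolic rigidity is essential: a finite-volume hyperbolic manifold admits no nontrivial local isometries moving a fixed thick basepoint a definite amount, so once the error is below a threshold the approximate isometry is unique up to a controlled ambiguity, and the implicit-function theorem (or the harmonic-map heat flow used by Kleiner--Lott) yields smooth dependence on $t$. Controlling that the ambiguity does not accumulate over the long time interval $[T_0,\infty)$, and that no thick point is missed, both reduce to the volume-monotonicity estimate already in hand. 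I would therefore structure the proof as: (1) finiteness of hyperbolic limits via volume monotonicity; (2) for each model, persistence + rigidity giving a smooth locally-defined $f_i(t)$; (3) patching and verifying (i)--(iii); referring throughout to \cite[Proposition 90.1]{Kle-Lot:topology} and \cite{hamilton:nonsingular} for the analytic details, which I would not reproduce in full.
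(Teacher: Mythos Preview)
Your proposal is correct and matches the paper's approach exactly: the paper's entire proof is the one sentence ``The proof of \cite{Kle-Lot:topology} transfers directly to our situation, using Corollary~\ref{cor:persist},'' and your sketch is simply an expanded outline of what that transferred argument contains (volume monotonicity for finiteness of limits, hyperbolic rigidity/harmonic maps for smooth dependence of $f(t)$, persistence for flow-closeness), with the same deferral to \cite[Proposition~90.1]{Kle-Lot:topology} and \cite{hamilton:nonsingular} for the analytic details.
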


The proof of \cite{Kle-Lot:topology} transfers directly to our situation, using Corollary \ref{cor:persist}. 
\begin{rem}\label{rem:stability} Any hyperbolic limit $H$ is isometric to one of the $H_{i}$. Indeed, let 
  $\ast \in H$ and $w>0$ be such that $\ast \in H^+(w)$. Then $x_n$ is in the $w/2$-thick part of $(M,t_{n}^{-1}g(t_{n}))$ for $n$ large enough. Assume that $f(t_{n})^{-1}(x_{n}) \in B(*_{i},\alpha(t_{n})^{-1})$ for a subsequence. Then $f(t_{n})^{-1}(x_{n})$ remains at bounded distance of $\ast_{i}$, otherwise it would go into a cusp contradicting the $w/2$-thickness of $x_{n}$. It follows that $(M,x_{n})$ and $(M,f(t_{n})(\ast_{i}))$ will have the same limit, up to an isometry.
 \end{rem}

\subsection{Proof of Proposition \ref{prop:incompressible}}

The proof of Hamilton \cite{hamilton:nonsingular} is by contradiction. Assuming that some torus is compressible, one finds an 
embedded compressing disk for each time further. Using Meeks and Yau \cite{Mee-Yau,Mee-Yau:smith}, the compressing disks can be chosen of least area. 
By controlling the rate of change of area of these disks, Hamilton shows that the area must go to zero in finite time---a contradiction.

Due to the possible noncompactness of our manifold, the existence of the least area compressing disks is not ensured: an area minimising sequence of disks can go deeper and deeper in an almost hyperbolic cusp. We will tackle this difficulty by considering the universal cover, which has bounded geometry (cf.	 Lemma \ref{lem:bd} and Addendum \ref{add:existence}), when necessary. 

Let us fix some notation. For all small $a>0$ we denote by $\bar H_a$ the compact core in $H$ whose boundary consists of 
horospherical tori of diameter $a$. By Proposition \ref{prop:stability} and Remark \ref{rem:stability}, 
we can assume that the  map $f(t)$ is 
defined on $B(\ast, \alpha(t)^{-1}) \supset \bar H_a$ for $t$ larger than some $T_a>0$. 
For all $t\geqslant T_a$ the image $f(t)(\bar H_a)$ is well defined and the compressibility in $M$ of a given boundary torus 
 $f(t)(\partial \bar H_a)$ does not depend on $t$ or $a$. We assume that some torus $\mathbf T$ of $\partial \bar H_a$ has 
compressible image in $M$. Below we refine the choice of the torus $\mathbf T$.

We define, for some fixed $a>0$,  
$$Y_{t} := f(t)(\bar H_a), \quad \mathbf{T}_{t}:=f(t)(\mathbf{T}) \quad \textrm{and} \quad W_{t} := M - \Int(Y_{t}).$$
 Our first task is to find a torus in $\partial Y_t$ which is compressible in $W_t$. Note that 
$\mathbf{T}_t$ is compressible in $M$, incompressible in $Y_t$ which is the core of a hyperbolic $3$-manifold, but not necessarily compressible in 
$W_t$: for example $Y_t$ could be contained in a solid torus and $\mathbf{T}_t$ compressible on this side. 

Consider the surface $\partial Y_t \subset M$ (not necessarily connected). As the induced map 
$\pi_1(\mathbf{T}_t) \to \pi_1(M)$, with base point choosen in $\mathbf{T}_t$, is noninjective by assumption, 
 Corollary~3.3 of Hatcher~\cite{Hat} tells that there is a compressing disk $D\subset M$, 
 with $\partial D \subset \partial Y_t$ homotopically non trivial and 
 $\Int(D) \subset M-\partial Y_t$. As $\Int(D)$ is not contained in $Y_t$, one has $\Int(D)\subset W_t$. Rename $\mathbf{T}_t$ the connected component of $\partial Y_t$ which contains $\partial D$ and $\mathbf{T}\subset \partial \bar H_a$ its 
 $f(t)$-preimage. Then $\mathbf{T}_t$ is compressible in $W_t$. 

Let $X_t$ be the connected component of $W_t$ which contains $D$. Using \cite[Lemma A.3.1]{B3MP} we have two exclusive possibilities:
\begin{enumerate}[(i)]
 \item $X_t$ is a solid torus. It has convex boundary, hence Meeks-Yau \cite[Theorem 3]{Mee-Yau} provide a least area compressing disk $D^2_t \subset X_t$ where $\partial D^2 \subset \mathbf{T}_t$ is in a given nontrivial free homotopy class.
 \item $\mathbf{T}_t$ does not bound a solid torus and $Y_t$ is contained in a $3$-ball $B$. Then $Y_t$ lifts isometrically to a $3$-ball in the universal cover 
$(\tilde M,\tilde g(t))$. Let $\tilde Y_t$ be a copy of $Y_t$ in $\tilde M$. By \cite{Hat} again, there is a torus 
$\tilde{\mathbf{T}_t }\subset \partial \tilde Y_t$ compressible in $\tilde M - \partial \tilde Y_t$, hence in 
$\tilde M - \tilde Y_t$. We denote by $\tilde X_t$ the connected component of $\tilde M - \Int(\tilde Y_t)$ in which $\tilde{\mathbf{T}_t}$ is compressible. 
As $(\tilde M,\tilde g(t))$ has bounded geometry, by \cite[Theorem 1]{Mee-Yau:smith} there is a compressing disk $D^2_t \subset \tilde X_t$ of least area with $\partial D^2_t \subset \tilde{\mathbf{T}_t}$ in a given nontrivial free homotopy class.

\end{enumerate}

We define a function $A : [T_a,+\infty)  \to (0,+\infty)$ by letting $A(t)$ be the infimum of the areas of such embedded disks. Similarly to  
\cite[Lemma 91.12]{Kle-Lot:topology} we have

\begin{lem}\label{lem:rate} For every $D>0$, there is a number $a_0>0$ with the following property. 
Given $a \in (0,a_0)$ there exists $T'_a>0$ such that for all $t_0\geqslant T'_a$ there is a piecewise smooth function $\bar A$ defined in a 
neighbourhood of $t_0$ such that $\bar A(t_0)=A(t_0)$, $\bar A \geqslant A$ everywhere, and 
$$ \bar A'(t_0) < \frac{3}{4}\left(\frac{1}{t_0+\frac{1}{4}} \right) A(t_0) -2\pi + D$$
if $\bar A$ is smooth at $t_0$, and $\lim_{t \to t_0,t>t_0} \bar A(t) \leqslant \bar A(t_0)$ if not.
 \end{lem}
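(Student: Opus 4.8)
The plan is to adapt the computation of the rate of change of the area of a least-area minimal disk along Ricci flow, as in Hamilton's nonsingular flows paper and in Kleiner--Lott \cite[Lemma 91.12]{Kle-Lot:topology}, with the extra care needed because our disk $D^2_t$ may live either in a solid torus $X_t\subset M$ or in the universal cover $\tilde X_t\subset\tilde M$. In both cases $(X_t,g(t))$ or $(\tilde X_t,\tilde g(t))$ has bounded geometry (the solid torus case by compactness of $\bar H_a$ and the collar geometry near $\mathbf T_t$, the universal cover case by Lemma~\ref{lem:bd} together with Addendum~\ref{add:existence}), so the Meeks--Yau disks vary in a controlled way. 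First I would fix $D>0$, choose $a_0$ small, and for $a\in(0,a_0)$ pick $T'_a$ large enough that on the time interval near $t_0\ge T'_a$ the metric $t^{-1}g(t)$ is $\alpha(t)$-close to $g_\hyp$ on $\bar H_a$ (Proposition~\ref{prop:stability}), so in particular the boundary torus $\mathbf T_{t_0}$ sits in an almost-hyperbolic collar with curvature close to $-1/4$ and second fundamental form close to that of a horospherical torus.

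The main step is the first-variation/second-variation argument. Let $D_0=D^2_{t_0}$ realise $A(t_0)$. Define $\bar A(t)$ to be the area, \emph{measured with $g(t)$}, of the fixed surface $D_0$ (possibly after a harmless fixed-boundary homotopy to keep $\partial D_0\subset\mathbf T_t$); then $\bar A(t_0)=A(t_0)$ and $\bar A(t)\ge A(t)$ since $D_0$ is a competitor. If $t_0$ is a regular time, differentiate: $\frac{d}{dt}\operatorname{area}_{g(t)}(D_0)=-\int_{D_0}(\operatorname{tr}_{D_0}\Ric)\,d\area$. Rewrite $\operatorname{tr}_{D_0}\Ric = \frac12 R - \Ric(\nu,\nu)$ where $\nu$ is the unit normal, and then use the Gauss equation $K_{D_0}=\frac12\big(R-2\Ric(\nu,\nu)\big)+\det(\mathrm{II})$ together with minimality ($\det(\mathrm{II})\le 0$ since the mean curvature vanishes) to get $\operatorname{tr}_{D_0}\Ric \ge K_{D_0}-\frac12 R + (\text{nonnegative})$ — more precisely one arranges $-\operatorname{tr}_{D_0}\Ric \le -K_{D_0} + \tfrac12 R_{\min}^- \cdot(\ldots)$. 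Now invoke the two curvature inputs available: the pinching/lower scalar bound $R\ge -6/(4t+1)$ from Definition~\ref{def:pinching} (which controls the $\frac12 R$ term, producing the coefficient $\tfrac34\cdot\frac{1}{t_0+1/4}$ after bookkeeping), and Gauss--Bonnet $\int_{D_0}K_{D_0}\,d\area = 2\pi - \int_{\partial D_0}k_g\,ds$ (using that $D_0$ is a disk), which produces the $-2\pi$ term. The leftover boundary geodesic-curvature integral $\int_{\partial D_0}k_g\,ds$ is where the smallness of $a$ enters: since $\partial D_0$ lies in the almost-horospherical torus $\mathbf T_t$ of diameter $\asymp a\sqrt t$, and the geometry there is $\alpha(t)$-close to hyperbolic, the total turning contributed along $\partial D_0$ is bounded by a quantity that can be made $<D$ by taking $a_0$ small and $T'_a$ large. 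Assembling these gives the stated inequality at regular $t_0$.

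If $t_0$ is a singular time of the Ricci flow with bubbling-off, then $g_+(t_0)\le g(t_0)$ pointwise, so $\operatorname{area}_{g_+(t_0)}(D_0)\le \operatorname{area}_{g(t_0)}(D_0)=\bar A(t_0)$; moreover the surgery $3$-balls have scalar curvature $\approx h(t_0)^{-2}\gg 0$ hence are disjoint from the almost-hyperbolic region containing $Y_{t_0}$ and its collar, so $D_0$ can be arranged to avoid them (in the universal-cover case one uses that the surgery, lifted equivariantly, likewise misses $\tilde Y_{t_0}$ and a collar of $\tilde{\mathbf T}_{t_0}$), and the disk persists as a valid competitor just after $t_0$. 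This yields $\lim_{t\to t_0^+}\bar A(t)\le\bar A(t_0)$, the ``not smooth'' alternative in the statement. One also has to note that $t\mapsto A(t)$ need not itself be differentiable even at regular times — this is exactly why one passes to the barrier $\bar A$; the construction above does this uniformly in a neighbourhood of $t_0$.

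The main obstacle I anticipate is controlling the boundary term $\int_{\partial D_0}k_g\,ds$ rigorously: one must show that a least-area compressing disk cannot have its boundary wrapping wildly on $\mathbf T_t$ in a way that makes the geodesic curvature integral large, and that the almost-hyperbolic collar estimate (curvature $\approx -1/4$, $\mathrm{II}$ of $\mathbf T_t$ close to that of a horosphere) genuinely forces this integral to be small once $a$ is small — here the free homotopy class of $\partial D_0$ is fixed and nontrivial, and one should use the isoperimetric/monotonicity behaviour of minimal disks together with the thinness of the collar. A secondary technical point is making the bounded-geometry hypothesis on the universal cover do its job uniformly in $t$ over the relevant time interval, so that the Meeks--Yau existence and the area bounds are available with constants independent of $t_0$; this is handled by Proposition~\ref{prop:stability} and the observation that on $[t_0,t_0+\varepsilon]$ the geometry of $\tilde Y_t$ and its collar stays uniformly close to the fixed hyperbolic model.
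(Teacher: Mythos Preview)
Your proposal is essentially correct and follows the same approach as the paper: the first-variation computation combined with Gauss--Bonnet and the scalar curvature lower bound $R\ge -6/(4t+1)$, with the boundary geodesic-curvature integral controlled via the almost-hyperbolic collar of $\mathbf T_t$ (the analogue of \cite[Lemma~91.11]{Kle-Lot:topology}), and the two cases (solid torus in $M$ versus universal cover) handled separately. One minor simplification you can make: at singular times the paper observes that unscathedness of $D_0$ is \emph{not} needed, since $g_+(t_0)\le g(t_0)$ alone already gives $\lim_{t\to t_0^+} A(t)\le A(t_0)$; your additional argument that $D_0$ avoids the surgery balls is therefore superfluous.
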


\begin{proof}
The proof is similar to the proof of \cite[Lemma 91.12]{Kle-Lot:topology}, and somewhat simpler as we don't have topological surgeries. Recall 
that our Ricci flow with bubbling-off $g(t)$ is non increasing at singular times, hence the unscathedness of least area compressing disks 
(\cite[Lemma 91.10]{Kle-Lot:topology}) 
is not needed: we have $\lim_{t \to t_0,t>t_0} A(t) \leqslant A(t_0)$ if $t_0$ is singular. However, something must be said about 
\cite[Lemma 91.11]{Kle-Lot:topology}. This lemma asserts that given $D>0$, there is $a_0>0$ such that 
for $a \in (0,a_0)$ and $\mathbf{T} \subset H$ a horospherical torus of diameter $a$, for all $t$ large enough 
$\int_{\partial D^2_t} \kappa_{\partial D^2_t} ds \leqslant \frac{D}{2}$ and $\length(\partial D^2_t) \leqslant \frac{D}{2}\sqrt{t}$, where 
$\kappa_{\partial D^2_t}$ is the geodesic curvature of $\partial D^2_t$. Its proof relies on the fact that an arbitrarily large 
collar neighbourhood of $\mathbf{T}_t$ in $W_t$ is close (for the rescaled metric $t^{-1}g(t)$) to a hyperbolic cusp if $t$ is large enough. In case (1) above, 
this holds on $X_t \cap f(t)B(\ast,\alpha(t)^{-1}))$ by Proposition \ref{prop:stability}. In case (2) observe 
that $f(t)(B(\ast,\alpha(t)^{-1}))$ is homotopically equivalent to the 
compact core $\bar H_t$, hence lifts isometrically to $(\tilde M,\tilde g(t))$. It follows that $\tilde X_t$ also has an arbitrarily 
large collar neighbourhood of $\tilde{\mathbf{T}}_t$ close to a hyperbolic cusp. 

The rest of the proof is identical to the proof of \cite[Lemma 91.12]{Kle-Lot:topology} and hence omitted.
\end{proof}
In particular $A$ is upper semi-continous from the right. Note also that as $A$ is defined as an infimum and $g(t_k)$ and $g(t)$ are $(1+\epsi_k)$-bilischitz when times $t_k \nearrow t$, for some $\epsi_k \to 0$, $A$ is lower semi-continuous from the left. 

Fix $D<2\pi$, $a \in (0,a_0)$ and $T'_a$ as in Lemma \ref{lem:rate}. 
Then consider the solution  $\hat A : [T'_a,+\infty) \to \RR$ of the ODE 
$$ \hat A' = \frac{3}{4}\left(\frac{1}{t+\frac{1}{4}} \right) \hat A -2\pi + D$$
 with initial condition $\hat A(T'_a)=A(T'_a)$. By a continuity argument, $A(t) \leqslant \hat A(t)$ for all 
 $t \geqslant T'_a$. However, from the ODE we have
$$\hat A(t) \left(t+\frac{1}{4} \right)^{-3/4} =4(-2\pi +D) \left(t+\frac{1}{4}\right)^{1/4} + \const,$$
which implies that $\hat A(t) <0$ for large $t$, contradicting the fact that $A(t)>0$.

This finishes the proof of Proposition \ref{prop:stability}.


\section{A Collapsing Theorem}
In this section we state a version of the collapsing theorem \cite[Theorem 0.2]{Mor-Tia:collapsing} in the context of 
manifolds with cusp-like metrics.

Let $(M_n,g_n)$ be a sequence of Riemannian $3$-manifolds.

\begin{defi}
We say that $g_n$ has \bydef{locally controlled curvature in the sense of Perelman} if for all 
$w>0$ there exist $\bar r(w)>0$ and $K(w) >0$ such that for $n$ large enough , if $0<r \le \bar r(w)$, if
$x \in (M_n,g_n)$ satisfies $\vol B(x,r) \ge w r^3$ and $\sec \ge -r^{-2}$ on $B(x,r)$ then 
$|\Rm(x)| \leqslant Kr^{-2}$, $|\nabla \Rm(x)| \leqslant Kr^{-3}$ and 
$|\nabla^2 \Rm(x) | \leqslant Kr^{-4}$ on $B(x,r)$.
\end{defi}

\begin{rem}\label{rem:curvature locally}
Note that if $g_n = {t_n}^{-1}g(t_n)$, where $g(\cdot)$ is as in
Proposition~\ref{thm:thick thin} and $t_n \to \infty$, then $g_n$ has locally controlled curvature in the sense of Perelman.
\end{rem}

\begin{defi}
 We say that $(g_n)$ \bydef{collapses} if there exists a sequence $w_n \to 0$ of positive numbers such that 
$(M_n,g_n)$ is $w_n$-thin for all $n$.
\end{defi}

From \cite[Theorem 0.2]{Mor-Tia:collapsing} we obtain:
\begin{theo}\label{thm:collapsing} Assume that $(M_n,g_n)$ is a sequence of complete Riemannian oriented $3$-manifolds
 such that 
\begin{enumerate}[\indent (i)]
 \item $g_n$ is a cusp-like metric for each $n$,
 \item $(g_n)$ collapses, 
\item $(g_n)$ has locally controlled curvature in the sense of Perelman,
\end{enumerate}
then for all $n$ large enough $M_n$ is a graph manifold.
 \end{theo}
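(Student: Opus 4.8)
The plan is to reduce Theorem~\ref{thm:collapsing} to the Morgan--Tian collapsing theorem \cite[Theorem 0.2]{Mor-Tia:collapsing}, whose hypotheses are stated for closed manifolds, by a compactification/truncation argument that uses the cusp-like structure. First I would exploit hypothesis (i): since each $g_n$ is cusp-like, $M_n$ has finitely many ends, each with a neighbourhood homothetic to a rank-two hyperbolic cusp, and $M_n$ admits a manifold compactification $N_n$ whose boundary is a (possibly empty) union of $2$-tori, as noted after Definition~\ref{def:cusp-like}. The first step is therefore to truncate: for a suitable parameter, remove from $M_n$ a small horospherical neighbourhood of each cusp, obtaining a compact manifold-with-boundary $N_n'$ whose boundary components are incompressible flat (or nearly flat) tori, with $M_n$ recovered from $N_n'$ by gluing back product pieces $T^2\times[0,\infty)$. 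Since $T^2\times[0,\infty)$ (and more generally any cusp neighbourhood) is a graph manifold, and graph manifolds are closed under gluing along torus boundary components, it suffices to prove that $N_n'$ is a graph manifold for $n$ large.

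Next I would verify that the truncated pieces $N_n'$ satisfy the hypotheses of \cite[Theorem 0.2]{Mor-Tia:collapsing}, modulo passing to the closed setting (e.g.\ by doubling $N_n'$ along its toral boundary, or by invoking the boundary version of the collapsing theorem directly). The collapsing hypothesis (ii) is inherited: if $(M_n,g_n)$ is $w_n$-thin with $w_n\to 0$, then so is the submanifold $N_n'$, because thinness is a pointwise condition on balls and every point of $N_n'$ is a point of $M_n$. Similarly, hypothesis (iii), locally controlled curvature in the sense of Perelman, passes to $N_n'$ since it too is a pointwise statement on balls $B(x,r)$ with $r$ small (so that $B(x,r)$ sees only the interior geometry); one must be a little careful near $\partial N_n'$, but there the metric is close to a hyperbolic cusp, whose geometry is explicitly controlled, so the required bounds on $|\Rm|$, $|\nabla\Rm|$, $|\nabla^2\Rm|$ hold trivially on the cusp region. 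This gives the three inputs --- bounded (or locally controlled) curvature, collapse, and the relevant lower curvature/volume control --- that Morgan--Tian require.

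The main obstacle, and the step that needs the most care, is handling the boundary/noncompactness interface cleanly: one must choose the truncation depth uniformly in $n$ so that (a) the cut tori are genuinely incompressible and the complementary pieces are honest product cusp neighbourhoods, and (b) the curvature control hypothesis is not spoiled near the cuts. The cusp-like condition (Definition~\ref{def:cusp-like}), which gives $C^k$-closeness to an exact hyperbolic cusp at infinity, is exactly what makes this possible: deep enough in the end the metric is as close to hyperbolic as we wish, uniformly. A secondary point is to confirm that the version of the collapsing theorem we apply genuinely allows toral boundary (or to carry out a doubling argument and check that the doubled metric still collapses and still has locally controlled curvature --- both clear since doubling along a flat torus introduces no new geometry up to a $C^0$-small smoothing). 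Once $N_n'$ is shown to be a graph manifold, regluing the cusp products finishes the proof that $M_n$ is a graph manifold for all large $n$.
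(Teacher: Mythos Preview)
Your proposal takes essentially the same route as the paper: truncate the cusps to obtain a compact manifold-with-boundary $M_n'$, apply Morgan--Tian's collapsing theorem to $M_n'$, then observe that re-attaching the cusp pieces (which are graph) preserves the graph property. The paper does exactly this.

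The one point on which the paper is more precise than your outline is the boundary condition. Morgan--Tian's \cite[Theorem 0.2]{Mor-Tia:collapsing} is stated for compact manifolds with \emph{convex} boundary, but a horospherical torus cut from a hyperbolic cusp is concave when viewed from the compact side, so one cannot feed the raw truncation directly into their theorem. The paper deals with this in two steps: first it deforms $g_n$ so that the sectional curvature is exactly $-1/4$ on a neighbourhood of the ends (checking that (ii) and (iii) survive), then after truncating it deforms the metric on $M_n'$ near the boundary into a \emph{reversed} hyperbolic cusp, which makes the boundary convex. It also remarks that the convexity hypothesis is not genuinely needed here, citing \cite[Proposition 5.1]{bam:longtimeII}. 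Your alternative of doubling along the toral boundary would work as well, but is not quite as free as you suggest: horospherical tori are not totally geodesic, so the doubled metric has a crease and must be smoothed while preserving collapse and the Perelman curvature control.

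Two minor inaccuracies in your write-up: incompressibility of the cut tori is neither used nor always true (think of a solid torus), and there is no reason the truncation depth can or should be chosen uniformly in $n$ --- the paper simply chooses a suitable neighbourhood $U_n$ for each $n$.
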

The manifolds in \cite[Theorem 0.2]{Mor-Tia:collapsing} are assumed to be compact and may have convex boundary. Our cusp-like assumption (i) allows to apply their result 
by the following argument. First we deform each $g_n$ so that the sectional curvature is $-\frac{1}{4}$ on some neighbourhood of the ends, assumptions (ii),(iii) remaining 
true. Let $w_n \to 0$ be a sequence of positive numbers such that $g_n$ is $w_n$-thin. 
For each $n$, we can take a neighbourhood $U_n$ of the ends of $M_n$, with horospherical boundary, small enough so that the complement $M'_n=M_n \setminus \Int U_n$ satisfies assumptions 
 of  \cite[Theorem 0.2]{Mor-Tia:collapsing} with collapsing numbers $w_n$, except for the convexity of the added boundary. Then we deform the metric on $M'_n$ 
 near the boundary into a reversed hyperbolic cusp so that the boundary becomes convex. It follows that $M'_n$, hence $M_n$, is a graph manifold for all $n$ large enough. In fact it should be clear from Morgan-Tian's proof that the convexity assumption is not necessary in this situation 
 (see the more general \cite[Proposition 5.1]{bam:longtimeII}).

\section{Proof of the main theorem}

Here we prove Theorem~\ref{thm:geometrisation}. We sketch the organisation 
of the proof. Let $(M,g_0)$ be a Riemannian $3$-manifold satisfying the hypotheses of this theorem. We also assume that $M$ is not a solid torus, 
is nonspherical and does not have a metric with $\Rm\ge 0$, otherwise it would be Seifert 
fibred and conclusion of Theorem~\ref{thm:geometrisation} holds.
We first define on $M$ a Ricci flow  with $(r(\cdot),\delta(\cdot))$-bubbling-off $g(\cdot)$, issued from $g_0$ and 
defined on $[0,+\infty)$. As mentioned before, we may have to pass to the universal cover. 
 By existence Theorem \ref{thm:existence} $g(\cdot)$ exists 
on a maximal interval $[0,\Tmax)$. The case $\Tmax < +\infty$ is ruled out using the fact that $(M,g(\Tmax))$ is covered by canonical neighbourhoods (see claim \ref{claim:alltime} below).
 Proposition~\ref{thm:thick thin} then provides a sequence $t_n \nearrow +\infty$ and connected open 
 subsets $H_{n} \subset M_n=(M,{t_n}^{-1}g(t_n))$, diffeomorphic to a complete, finite volume hyperbolic manifold $H$ (possibly empty). 
 We set $G_n := M_n \setminus H_n$. Proposition \ref{prop:incompressible} proves that the tori of $\partial \overline H_{n}$ (if $H \not=\emptyset$) are incompressible in $M$ for large $n$. In this case, the atoroidality assumption on $M$ implies that $H_n$ is	 diffeomorphic to $M$ and that each component of $G_n$ is a cuspidal end $T^2 \times [0,\infty)$ of $M_n$. 
Then $g(t)$ converges (in the pointed topology) to a complete, finite volume hyperbolic metric on $M$. In both 
cases ($H = \emptyset$ or $H \neq\emptyset$), $G_{n}$ collapses with curvature locally controlled in the sense 
of Perelman. If $H = \emptyset$, we conclude by collapsing theorem \ref{thm:collapsing} that $M_n=G_n$ is a 
graph manifold (hence Seifert fibred) for all $n$ large enough. If $H_n \neq \emptyset$, Proposition 4.2 
gives a continuous decomposition $M=H_t \cup G_t$ where $H_t$ is diffeomorphic to $M$, $g(t)$ is smooth and $|\Rm| \leq Ct^{-1}$ 
there, and $G_t$ is $\alpha(t)$-thin. We then use the topological/geometric description of the thin part presented in \cite{Bam:longtimeI, bam:longtimeII} 
to obtain that $|\Rm| \le Ct^{-1}$ on $G_t$, by the same argument as in \cite[Theorem 1.1]{Bam:longtimeI}.\\

%


\subsection{Setting up the proof}

Let $(\tilde M,\tilde g_0)$ be the Riemannian universal cover of $(M,g_0)$. By Lemma \ref{lem:bd} it has bounded geometry.  
Without loss of generality, we assume that it is normalised. If $M$ is compact, we can even assume that $g_0$ itself is normalised.

We now define a Riemannian $3$-manifold $(\bar M,\bar g_0)$ by setting $(\bar M,\bar g_0):=(M,g_0)$ if $M$ is compact, 
and $(\bar M,\bar g_0):=(\tilde M,\tilde g_0)$ otherwise. In either case, $\bar g_0$ is complete and 
normalised. By~\cite{msy}, $\bar M$ is irreducible. If $\bar M$ is spherical, then $M$ is spherical, contrary to the assumption. 
Henceforth, we assume that $\bar M$ is nonspherical. 

Thus Theorem~\ref{thm:existence} applies to $(\bar M,\bar g_0)$, where $\bar \delta(\cdot)$ is chosen from Theorem 
\ref{thm:thick thin}. Let $\bar g(\cdot)$ be a Ricci flow with bubbling-off on $\bar M$ with initial condition $\bar g_0$. By Addendum~\ref{add:existence}, we also have a Ricci flow with bubbling-off $g(\cdot)$ on $M$ with initial condition $g_0$ covered by $\bar g(\cdot)$.

\begin{claim}\label{claim:alltime}
The evolving metrics $g(\cdot)$ and $\bar g(\cdot)$ are defined on $[0,+\infty)$.
\end{claim}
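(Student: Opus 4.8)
The plan is to prove Claim~\ref{claim:alltime} by contradiction, using the alternative conclusions of the existence Theorem~\ref{thm:existence} together with the irreducibility and non-sphericity of $\bar M$. Suppose that $\bar g(\cdot)$ (equivalently $g(\cdot)$, since one covers the other) is defined only on a maximal interval $[0,\Tmax)$ with $\Tmax<+\infty$. By Theorem~\ref{thm:existence} applied to $(\bar M,\bar g_0)$, one of its two conclusions holds: either conclusion (ii), which produces a Ricci flow with bubbling-off defined on all of $[0,+\infty)$ — immediately contradicting $\Tmax<\infty$ — or conclusion (i), which produces a Ricci flow with bubbling-off on some finite interval $[0,T]$ such that every point of $(\bar M,\bar g(T))$ is the centre of an $\epsi_0$-neck or an $\epsi_0$-cap. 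So the whole problem reduces to ruling out conclusion (i).

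The key step is therefore the topological observation: if $(\bar M,h)$ is a complete Riemannian $3$-manifold of bounded geometry, every point of which is the centre of an $\epsi_0$-neck or $\epsi_0$-cap, then $\bar M$ must be diffeomorphic to $S^3$, $\Rr^3$, $S^2\times\Rr$, or the twisted $S^2$-bundle over $S^1$ (equivalently, $\bar M$ is covered by $S^2\times\Rr$, or is $S^3$ or $\Rr^3$). The standard argument: the necks glue up along an ``axis'' structure, exactly as in the classification of $\epsi$-horns/tubes in Perelman's work and in Chapter~3 of~\cite{B3MP}. One picks an $\epsi_0$-cap or runs along a maximal chain of overlapping $\epsi_0$-necks; the union of all necks through a point is an open subset foliated by almost-round $2$-spheres, and $\bar M$ is obtained by capping off this tube by at most two $3$-balls (coming from caps) or is itself an $S^2$-bundle over $S^1$. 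In all cases $\bar M$ is either spherical, or $\Rr^3$, or has a nontrivial embedded $2$-sphere not bounding a ball (the $S^2\times\Rr$ and twisted cases), i.e.\ is reducible. Each of these possibilities is excluded by hypothesis: $\bar M$ was arranged to be irreducible and nonspherical in ``Setting up the proof'', and $\bar M$ is not $\Rr^3$ since (in the noncompact case) $\bar M=\tilde M$ is the universal cover of $M$, and $M$ — being not a solid torus, having a cusp-like metric — is not $\Rr^3$; while if $M$ is compact then $\bar M=M$ is closed, hence certainly not $\Rr^3$ or $S^2\times\Rr$, and not $S^3$ by non-sphericity.

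Concretely, I would organise the proof as follows. First, invoke Theorem~\ref{thm:existence} for $(\bar M,\bar g_0)$ and observe that conclusion (ii) gives exactly the claim, so it remains to exclude conclusion (i). Second, assume conclusion (i) holds and apply the canonical-neighbourhood covering to deduce, via the tube/axis analysis of~\cite[Chapter 3]{B3MP} (the same reasoning used in the proof of the Addendum above to show that a union of caps covering $M$ forces $M\cong S^3$), that $\bar M$ is diffeomorphic to $S^3$, $\Rr^3$, $S^2\times\Rr$, or the nonorientable-... (in the orientable case: the twisted $S^2\times S^1$ — but that one is nonorientable, so in fact only $S^3$, $\Rr^3$, or $S^2\times\Rr$ arise in our orientable setting). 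Third, note each of these contradicts our running hypotheses on $\bar M$: $S^3$ contradicts nonsphericity, $S^2\times\Rr$ contradicts irreducibility, and $\Rr^3$ contradicts the fact that $\bar M$ is either closed or is $\tilde M$ with $M\neq\Rr^3$. Finally, conclude that conclusion (ii) must hold, so $g(\cdot)$ and $\bar g(\cdot)$ are defined on $[0,+\infty)$.

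The main obstacle is the second step — making rigorous that ``every point has a canonical neighbourhood'' forces this short list of topological types for a complete manifold of bounded geometry. This is precisely the kind of global analysis of $\epsi$-tubes, $\epsi$-horns and capped tubes carried out in~\cite[Chapter 3]{B3MP} and in Perelman's papers; one has to handle the possibility of infinitely many overlapping necks (using bounded geometry and completeness to control the foliation globally) and the case where the ``axis'' is a circle. Everything else is either a direct appeal to Theorem~\ref{thm:existence} or a check against the standing hypotheses on $\bar M$ already set up in the preceding paragraphs.
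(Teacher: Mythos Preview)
Your overall strategy is correct and matches the paper's: invoke Theorem~\ref{thm:existence}, rule out conclusion~(i) by classifying complete $3$-manifolds covered by $\epsi_0$-necks and caps (the paper cites \cite[Theorem~7.4]{bbm:openflow} for the list $S^3$, $S^2\times S^1$, $S^2\times\Rr$, $\Rr^3$), and then eliminate each possibility.

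However, your elimination of the case $\bar M\cong\Rr^3$ has a genuine gap. You write that ``$\Rr^3$ contradicts the fact that $\bar M$ is either closed or is $\tilde M$ with $M\neq\Rr^3$'', but $M\neq\Rr^3$ does \emph{not} imply $\tilde M\neq\Rr^3$. Indeed, for the very manifolds under consideration---finite-volume hyperbolic $3$-manifolds with cusps---one has $\tilde M\cong\Rr^3$ topologically. So this case cannot be dismissed by the hypothesis on $M$ alone.

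The paper closes this gap with an extra argument specific to the neck/cap covering at time $T$: the complement of the neck-like part of $(\bar M,\bar g(T))\cong\Rr^3$ is a single $3$-ball (the unique cap), which is therefore invariant under the deck transformation group; since this group acts freely and every self-homeomorphism of a $3$-ball has a fixed point, the group is trivial, whence $M=\bar M$. At that point one can either invoke $M\neq\Rr^3$ directly, or (as the paper does) derive a volume contradiction: $(M,g(T))$ has finite volume by Remark~\ref{rem:finite volume}, yet $(\bar M,\bar g(T))$ contains infinitely many disjoint $\epsi_0$-necks of controlled size and hence has infinite volume. You need to insert this deck-group step to make your argument complete.
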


\begin{proof}
If this is not true, then they are only defined up to some finite time $T$, and every point of $(\bar M,\bar g(T))$ is centre of an $\epsi_0$-neck or an $\epsi_0$-cap.
By Theorem~7.4 of~\cite{bbm:openflow}, $\bar M$ is diffeomorphic to $S^3$, $S^2\times S^1$, $S^2\times \Rr$ or $\Rr^3$.\footnote{This list is shorter than the corresponding list in~\cite{bbm:openflow} since we do not consider caps diffeomorphic to the punctured $RP^3$.} Since $\bar M$ is irreducible and nonspherical, $\bar M$ is diffeomorphic to $\Rr^3$. The complement of the neck-like part (cf.~again~\cite{dl:equi}) is a $3$-ball, which must be invariant by the action of the deck transformation group. Since this group acts freely, it is trivial. Thus $M=\bar M$.

Being covered by $\bar g(\cdot)$, the evolving metric $g(\cdot)$ is complete and of bounded sectional curvature. Hence by Remark~\ref{rem:finite volume}, $(M,g(T))$ has finite volume. By contrast, $(\bar M,\bar g(T))$ contains an infinite collection of pairwise disjoint $\epsi_0$-necks of controlled size, hence has infinite volume. This contradiction completes the proof of Claim~\ref{claim:alltime}.
\end{proof}

It follows from Claim~\ref{claim:alltime} that $\bar M$ carries an equivariant Ricci flow with bubbling-off $\bar g(\cdot)$ defined 
on $[0,+\infty)$ with initial condition $\bar g_0$.  
We denote by $g(\cdot)$ the quotient evolving metric on $M$. By Addendum~\ref{add:existence}, it is also a Ricci flow with $(r(\cdot),\delta(\cdot))$-bubbling-off. By Theorem \ref{thm:cusp-like}, $g(\cdot)$ remains cusp-like at infinity for all time. Now consider the alternative that follows 
the conclusion of Proposition \ref{thm:thick thin} part (iii) : Either 
\begin{enumerate}[(i)]
 \item there exist $w>0$, $t_n \to \infty$ such that the $w$-thick part of $(M,t_n^{-1}g(t_n))$ is nonempty for all $n$, or 
 \item there exist $w_n \to 0$, $t_n \to \infty$ such that the $w_n$-thick part of $(M,t_n^{-1}g(t_n))$ is empty for all $n$.
\end{enumerate}

We refer to the first case as the \emph{noncollapsing case} and to the second as the 
\emph{collapsing case}.\\

We denote by $g_n$ the metric ${t_n}^{-1}g(t_n)$. Note that $g_n$ has curvature locally controlled in the 
sense of Perelman (cf.~Remark~\ref{rem:curvature locally}).
We denote by $M_n$ the Riemannian manifold $(M,g_n)$, $M_n^+(w)$ its $w$-thick part, and 
$M_n^-(w)$ its $w$-thin part. In the collapsing case, $M_n=M_n^-(w_n)$ fits the assumptions of  Theorem \ref{thm:collapsing}. Hence it is a graph manifold for $n$ large enough.

%
%
%
%
%
Let us consider the other case.

\subsection{The noncollapsing case}

By assumption, there exist $w>0$ and a sequence $t_n \to \infty$ such that the $w$-thick part 
of $M_n$ is nonempty for all $n$.  Choose a sequence $x_n \in M_n^+(w)$. Up to extracting a subsequence, by part (iii) of 
Proposition \ref{thm:thick thin}, $(M_n,x_n)$ converges to a complete hyperbolic manifold  $(H,\ast)$ of finite volume. 
By definition of the convergence, there exist an exhaustion of 
$H$ by compact cores $\bar H_n \subset H$ and embeddings $f_n : (\bar H_n,\ast) \to (M,x_n)$ such that 
$|g_\hyp - f_{n}^\ast g_n|$ goes to zero.  Proposition~\ref{prop:stability} (stability of the thick part) 
gives $T_0>0$ and a nonincreasing function $\alpha : [T_{0},\infty) \to (0,\infty)$ tending to zero at infinity, and 
for $t\ge T_0$ embeddings $f(t) : B(\ast,\alpha(t)^{-1})\subset H \to M$ satisfying conclusions (i)--(iii) of this proposition. 
If $H$ is closed, the desired conclusion follows. From now on we assume that $H$ is not closed.
By Proposition \ref{prop:incompressible}, for each 
$m \in \NN$, for all $n$ large enough, each component of $f_n(\partial \bar H_m)$ is  an incompressible torus in $M$. 
Relabeling the $f_n$ we can assume that the property holds for $f_m(\partial \bar H_m)$ for all $m$. By atoroidality of $M$, it follows that 
$H_n := \Int f_n(\bar H_n) \subset M$ is diffeomorphic to $M$ for all $n$, and $G_n :=M \setminus H_n$ is a disjoint 
union of neighbourhoods of cuspidal ends of $M_n$.
For large $t\ge T_0$, choose a compact core $\bar H_t \subset B(\ast,\alpha(t)^{-1})$ such that $\partial \bar H_t$ consists of horospherical tori 
whose diameter goes to zero as $t \to \infty$. We assume moreover that $t \to \bar H_t$ is smooth. Set $H_t := f(t)(\bar H_t)\subset M$ and 
$G_t := M \setminus H_t$.  Then $H_t$ is diffeomorphic to $M$, 
$t \mapsto g(t)$ is smooth there and $|\Rm| \leq Ct^{-1}$ by closeness with $H$. On the other hand,  $G_t$ is $w(t)$-thin for some $w(t) \to 0$ as $t\to \infty$. 
There remains to prove that $G_t$ satisfies $|\Rm| \leq Ct^{-1}$ also, which will imply its unscathedness.

Consider a connected component $\calc(t)$ of $G_t$. For all large $t$, $\partial \calc(t)$ is an incompressible torus in $M$ with 
a collar neighbourhood $\alpha(t)$-close, w.r.t $t^{-1}g(t)$, to a collar neighbourhood of a horospherical torus in $H$. 
On the other hand, $\calc(t)$ is diffeomorphic to $T^2 \times [0,\infty)$ and its 
 end has a cusp-like structure, hence curvature also bounded by $Ct^{-1}$. There remains to control what happens in the middle of $\calc(t)$. 
 
We apply the topological/geometric description of the thin part obtained in \cite[Proposition 5.1]{bam:longtimeII} to a 
compact subset $\calc'(t) \subset \calc(t)$ which we define as follows.   

By Theorem \ref{thm:stability2} there is an  embedding $f_{\cusp} : T^2 \times [0,+\infty)  \to M$ and a function $b : [0,+\infty) \to [0,\infty)$ such that 
$$| (4t)^{-1}f_{\cusp}^{\ast}g(t) - g_{\hyp}|_{T^2 \times [b(t),+\infty)} < w(t)$$
and $f_\cusp(T^2\times [b(t),+\infty)) \subset \calc(t)$ is a neighbourhood of its end. 
Here $g_{\hyp}$ denotes a hyperbolic metric $e^{-2s}g_{\eucl} + ds^2$ (with sectional curvature $-1$) on 
$T^2 \times [0,+\infty)$. The metric $4g_\hyp$ may differ from the one on $H$. 
We can assume $b(t) \to \infty$. We define $\calc_{\cusp}(t):=f_{\cusp}(T^2 \times [b(t)+2,+\infty))$ and  
$$ \calc'(t) := \calc(t) \setminus \Int \calc_{\cusp}(t).$$ 

Now we fix functions $\bar r$, $K$ given by Proposition~\ref{thm:thick thin}, $\mu_1>0$ given by \cite[Lemma 5.2]{bam:longtimeII}, 
$w_1=w_1(\mu_1,\bar r,K)>0$ given by \cite[Proposition 5.1]{bam:longtimeII}. 

The closed subset $\calc'(t)$ satisfies the assumptions of 
the latter proposition for $t \geq T_1$  large enough such that $w(t) < w_1$. 
We now follow the proof of \cite[Theorem 1.1 on p.~23]{Bam:longtimeI}.
Decompose $\calc'(t)$ into closed subsets $V_1,V_2,V'_2$ as given by the proposition. The two boundary components of 
$\calc'(t)$ have to bound components of $V_1$. Either $\calc'(t)=V_1$ or the boundary components 
of $\calc'(t)$ bound components $\calc_1,\calc_2$ of $V_1$, which are diffeomorphic to $T^2 \times I$ and there 
is a component $\calc_3$ of $V_2$ adjacent to $\calc_1$. We prove that only the first case occurs, for all $t$ large enough.

%


\begin{lem}\label{lem:vone}
 For all $t$ large enough, $\calc'(t)=V_1$.
 \end{lem}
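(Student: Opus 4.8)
The plan is to argue by contradiction, following the scheme of the proof of \cite[Theorem 1.1]{Bam:longtimeI}. Suppose that for a sequence $t\to\infty$ we are in the second case, so that the two boundary tori of $\calc'(t)$ bound components $\calc_1,\calc_2$ of $V_1$ diffeomorphic to $T^2\times I$, and there is a component $\calc_3$ of $V_2$ adjacent to $\calc_1$. Recall from the structure of the thin part \cite[Proposition 5.1]{bam:longtimeII} that a component of $V_2$ carries a structure forcing it to be, up to finite cover, a Seifert piece over a $2$-orbifold or an interval bundle; in particular $\pi_1(\calc_3)$ contains a $\Zz^2$ coming from a fibre torus, or $\calc_3$ is (covered by) a twisted $I$-bundle. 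The first step is thus to extract, from the adjacency of $\calc_3$ to $\calc_1\cong T^2\times I$, a $\pi_1$-injective torus $T$ inside $\calc(t)$ that is not isotopic to $\partial\calc(t)$: either the fibre torus of $\calc_3$, or a torus produced by doubling an $I$-bundle boundary. This requires checking that the gluing along $\partial\calc_1$ is $\pi_1$-injective, which follows because $\partial\calc_1$ is incompressible on the $Y_t$ side (it is $\mathbf T_t$, incompressible in the hyperbolic core) and because $V_1$-components are $\pi_1$-injectively embedded by construction in \cite{bam:longtimeII}.

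The second step is to derive a contradiction with the hypotheses on $M$. Since $\calc(t)\cong T^2\times[0,\infty)$, any $\pi_1$-injective torus in $\calc(t)$ is isotopic in $\calc(t)$ to the end torus $T^2\times\{0\}$; hence the torus $T$ produced above is isotopic in $M$ to $\partial\calc(t)$, which is $\mathbf T_t=f(t)(\partial\bar H_a)$, a horospherical torus of the hyperbolic piece $H$. But inside $\calc_3$ this torus is either a Seifert fibre torus (so it bounds, or is carried by, a Seifert-fibred submanifold meeting $\calc_1$ essentially) or the boundary of a twisted $I$-bundle over the Klein bottle; in the latter case $M$ would contain an embedded Klein bottle or a nonseparating torus, contradicting irreducibility and atoroidality together with orientability. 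In the former case, glueing the Seifert structure of $\calc_3$ across $\calc_1\cong T^2\times I$ to the product structure of $\calc_1$ exhibits a nontrivial embedded incompressible torus in $M$ that is neither boundary-parallel nor cusp-parallel (it sits strictly inside the interior of $\calc(t)$, separated from the end by $\calc_{\cusp}(t)$ and from $H$ by $\mathbf T_t$), contradicting the atoroidality of $M$.

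To make this rigorous one has to rule out the degenerate possibility that $\calc_3$ itself is all of $\calc(t)\setminus(\calc_1\cup\calc_2\cup\calc'(t)\cap V_1)$ in a way that makes the produced torus boundary-parallel after all; here one uses the quantitative closeness of the two ends of $\calc'(t)$ to hyperbolic cusps, together with the fact that $\calc_{\cusp}(t)$ already accounts for the genuine cuspidal end, so that $\calc_3$ lies in a region of bounded (rescaled) geometry where \cite[Proposition 5.1]{bam:longtimeII} applies nontrivially. The main obstacle, as usual in this circle of ideas, is the bookkeeping of the Seifert/graph structure on $V_2\cup V'_2$ and verifying that the fibre torus one extracts is genuinely incompressible in all of $M$ rather than just in $\calc(t)$; this is where the incompressibility statement Proposition \ref{prop:incompressible} and the diffeomorphism $H_n\cong M$ (from atoroidality) are used, to guarantee that the complement $\calc(t)=G_t$ really is a product and hence that no ``hidden'' essential torus can be absorbed. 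Once the essential torus is produced, the contradiction with atoroidality is immediate, so for all $t$ large enough the second case cannot occur and $\calc'(t)=V_1$.
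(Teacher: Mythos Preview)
Your approach has a genuine gap: you are trying to extract a topological contradiction from the existence of the $V_2$-component $\calc_3$, but the decomposition $\calc'(t)=V_1\cup V_2\cup V'_2$ coming from \cite[Proposition~5.1]{bam:longtimeII} is \emph{geometric}, not a JSJ decomposition. A component $\calc_3$ of $V_2$ is Seifert-fibred by short circles, but there is no topological obstruction whatsoever to $\calc_3$ being diffeomorphic to $T^2\times I$ with its standard circle fibration; in that case the ``fibre torus'' you produce is boundary-parallel, and your argument collapses. You acknowledge this ``degenerate possibility'' but the fix you propose---that the closeness to hyperbolic cusps at the ends, and the bounded rescaled geometry in the middle, somehow forces the Seifert structure on $\calc_3$ to be topologically nontrivial---does not work: the collapsing structure on the thin part is perfectly compatible with $\calc'(t)\cong T^2\times I$, and a long thin $T^2\times I$ collapsed along one circle factor is exactly the model for a $V_2$-piece. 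No incompressible torus non-isotopic to $\partial\calc(t)$ can be produced this way, so atoroidality is never contradicted.

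The paper's proof is analytic, not topological, and this is essential. One takes generators $\gamma,\beta$ of $\pi_1\calc'(t)$, represents them by loops $\gamma_i(t),\beta_i(t)$ on each boundary component of $\calc'(t)$, and lets $A(t),B(t)$ be the infimal areas of annuli connecting them. A differential inequality for $t^{-1}A(t)$ (analogous to the compressing-disk estimate in Section~\ref{sec:incompressible}, cf.~\cite[Lemma~8.2]{Bam:longtimeI}) gives $t^{-1}(A(t)+B(t))\to 0$. On the other hand, at any time when $\calc_3$ exists, \cite[Proposition~5.1(c3)]{bam:longtimeII} supplies a $2$-Lipschitz map $p:U\to\RR^2$ whose fibres are the short $S^1$-fibres of $\calc_3$, noncontractible in $M$; each fibre must meet both annuli, forcing $A(t)+B(t)\ge c\,t$ for a fixed $c>0$. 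This is the contradiction. The geometric input---short noncontractible fibres covering a region of definite $2$-dimensional area at scale $\sqrt t$---is precisely what your topological argument is missing.
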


Before proving this lemma, we explain how to conclude the proof of the theorem. First \cite[Lemma 5.2(ii)]{bam:longtimeII} applies to any $x \in V_{1}$, giving 
$w_1=w_1(\mu_1,\bar r,K)>0$ such that
$$\vol \tilde B(\tilde x,t,\rho_{\sqrt{t}}(x,t)) \ge w_1(\rho_{\sqrt{t}}(x,t))^3,$$
for any lift $\tilde x \in \tilde M$ of $x$.
Let $\bar \rho=\bar \rho(w_1)>0$ be given by Proposition \ref{prop:bar rho bis}. If $\rho_{\sqrt{t}}(x,t) < \rho(x,t)$  then $\rho(x,t) \geq \sqrt{t} > \bar \rho \sqrt{t}$. 
If not, $\rho(x,t) = \rho_{\sqrt{t}}(x,t)$ and Proposition \ref{prop:bar rho bis} (ii) implies  
$$\rho_{\sqrt t}(x,t) \geq \bar \rho \sqrt{t}$$ 
if $t$ is large enough (larger than $\bar T =\bar T(w_1)$). In both cases, $\rho_{\sqrt t} \geq \bar \rho \sqrt{t}$.
Then Proposition \ref{prop:bar rho bis}(i) with $r=\rho_{\sqrt{t}}(x,t)$ implies $|\Rm| \le C(w_1)t^{-1}$ at $(x,t)$ for some 
$C=C(w_1)>0$. 
Thus the proof of the theorem is finished if $\calc'(t) = V_{1}$ for all large $t$. \\

We now prove Lemma~\ref{lem:vone}, arguing by contradiction. Set $T_2 :=\max\{T_0,T_1,\bar T\}$.
 Assume that there exist arbitrarily large times $t \geq T_2$ such that $\calc'(t) \not=V_{1}$. At any of these times, the  
  $S^1$-fibres of $\calc_3$ are homotopic to a fibre of $\partial \calc_1$, by \cite[Proposition 5.1(b2)]{bam:longtimeII}. 
By incompressibility of $\partial \calc_{1}$ in $M$, this curve generates an infinite cyclic subgroup in $\pi_1(M)$. Then 
\cite[Lemma 5.2(i)]{bam:longtimeII} applies to any $x \in \calc_3 \cap V_{2,\reg}$ and gives 
$$ \vol \tilde B(\tilde x,t,\rho_{\sqrt{t}}(x,t)) \ge w_1(\rho_{\sqrt{t}}(x,t))^3,$$
for any lift $\tilde x$ of $x$, and 
hence $\rho_{\sqrt{t}}(x,t) \geq \bar \rho \sqrt{t}$ as above.
 Moreover 
 \cite[Proposition 5.1(c3)]{bam:longtimeII} gives $s=s_2(\mu_1,\bar r,K) \in (0,1/10)$,  an open set $U $ such that 
\begin{equation}
 B(x,t,\frac{1}{2}s\rho_{\sqrt{t}}(x,t)) \subset U \subset  B(x,t, s\rho_{\sqrt{t}}(x,t)), \label{eq:U}
\end{equation}
and a $2$-Lipschitz map $p : U \to \RR^2$ whose image contains $B(0,\frac{1}{4}s\rho_{\sqrt{t}} (x,t)) \subset \RR^2$ 
and whose fibres are homotopic to fibres of $\calc_3$, hence noncontractible in $M$.

Now consider any noncontractible loop $\gamma \subset \calc'(T_2)$. Define for all $t\geq T_2$, 
$\gamma_1(t) \subset \partial \calc(t)$ freely homotopic to $\gamma$ such that 
$f(t)^{-1} \circ \gamma_1(t)$ is geodesic in $\partial \bar H_t$ and evolves by parallel transport in $H$ w.r.t. $t$.
On the side of the cusp, define $\gamma_2(t) \subset \partial \calc_{\cusp}(t)$ freely homotopic to $\gamma$ such that 
$f_{\cusp}^{-1}\circ \gamma_2(t) \subset T^2 \times \{b(t)+2\}$ is geodesic in $(T^2,g_\eucl)$ and 
evolves by parallel transport (at speed $b'$).

In particular $\gamma_1(t) \subset \calc_1$ and $\gamma_2(t) \subset \calc_{2}$ at each time when these sets are defined (that is 
when $\calc'(t) \not=V_{1}$) and these loops are freely homotopic in $\calc'(t)$.
Let $A(t)$ be the infimum of the areas of all smooth homotopies $H : S^1 \times [0,1] \to \calc'(t)$ connecting 
$\gamma_1(t)$ to $\gamma_2(t)$.

\begin{claim}\label{claim:four}
 $t^{-1}A(t) \to 0$ as $t \to \infty$.
\end{claim}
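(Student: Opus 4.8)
\textbf{Proof plan for Claim~\ref{claim:four}.}

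The plan is to build, at each large time $t$, an explicit competitor homotopy between $\gamma_1(t)$ and $\gamma_2(t)$ whose area is $o(t)$, and then invoke $A(t) \le \text{area of this competitor}$. The natural competitor is a ``cylinder'' that runs through the whole collar $\calc'(t)$ parallel to the $S^1$-direction. Concretely, recall that $\calc'(t) = \calc(t) \setminus \Int\calc_\cusp(t)$ is sandwiched between the horospherical torus $\partial\calc(t) = f(t)(\partial\bar H_t)$ on the hyperbolic side and the torus $\partial\calc_\cusp(t) = f_\cusp(T^2\times\{b(t)+2\})$ on the cusp side, and that on arbitrarily large collar neighbourhoods of each of these tori the rescaled metric $t^{-1}g(t)$ is $\alpha(t)$-close (resp.\ $w(t)$-close) to a standard hyperbolic cusp. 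First I would cover $\calc'(t)$ by finitely many such collar regions together with, in the middle, the region described by \cite[Proposition 5.1(c3)]{bam:longtimeII}: there the $2$-Lipschitz map $p:U\to\RR^2$ with fibres homotopic to the $S^1$-fibre of $\calc_3$ lets one sweep out a homotopy supported in a set of controlled diameter, with area bounded by a universal constant times $(s\rho_{\sqrt t})^2 \le s^2 t$.

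The key steps, in order, are: (1) Since $\calc'(t)\neq V_1$ at the times under consideration (that is the standing assumption in the proof of Lemma~\ref{lem:vone} inside which this claim sits), invoke the structure of the thin part: $\calc'(t)$ is made of the $T^2\times I$ pieces $\calc_1,\calc_2$ of $V_1$ and the Seifert piece $\calc_3$ of $V_2$, and all the relevant $S^1$-fibres are mutually homotopic and homotopic to $\gamma$. (2) On the two hyperbolic-cusp collars (near $\partial\calc(t)$ and near $\partial\calc_\cusp(t)$), build the obvious ``parallel transport'' homotopy: in the model hyperbolic cusp $e^{-2s}g_\eucl+ds^2$, the cylinder swept by a fixed geodesic loop of the flat cross-section as $s$ varies over an interval $[s_0,s_1]$ has area $\asymp \ell\,(e^{-s_0}-e^{-s_1})$ where $\ell$ is the loop length; since $\gamma_1(t)$ and $\gamma_2(t)$ are chosen to be such parallel-transported geodesics, and since $t^{-1}g(t)$ is $\alpha(t)$- resp.\ $w(t)$-close to this model, the area of each of these two pieces is $O(\sqrt t\cdot\text{const})$ measured in $g(t)$, i.e.\ $O(t^{1/2})\cdot o(1)$ after we note the cross-sectional geodesic length is itself $o(\sqrt t)$ because the horospherical tori have diameter $\to 0$ in $t^{-1}g(t)$. (3) In the middle region, use the $2$-Lipschitz fibration $p:U\to\RR^2$ to homotope across $\calc_3$ with area $\le C s^2 t$ for the universal constant $s=s_2(\mu_1,\bar r,K)<1/10$; more carefully one iterates this over the boundedly-many Seifert and $T^2\times I$ pieces composing $\calc'(t)$, and since by \cite[Proposition 5.1]{bam:longtimeII} the number of such pieces and the constant $s$ are uniform, the total area from the middle is $\le C't$ with $C'$ a fixed constant --- but this is not yet $o(t)$!

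That last point is exactly where the main obstacle lies, and the resolution is the heart of the argument: one must exploit that the middle pieces are themselves \emph{thin}, not merely of controlled diameter. The correct estimate is that a Seifert piece that is $w$-collapsed along its $S^1$-fibres admits a sweep-out whose area is $\le C(\text{diam})\cdot(\text{fibre length})$, and the fibre length is $\le w\cdot\text{diam}$ by thinness, so the area of each middle piece is $\le C w(t)\,(\text{diam})^2 \le C w(t)\, s^2 t = o(t)$. So the refined plan for step (3) is: in each $V_2$-piece, foliate by the $S^1$-fibres of the Seifert structure, write the candidate homotopy as the union over a $1$-parameter family of fibres (parametrised by a path in the base orbifold, of length $\le \text{diam}$), and bound the area by $\int (\text{fibre length})\,d(\text{base length}) \le w(t)\,(\text{diam})^2$; in each $T^2\times I$ piece of $V_1$ do the analogous thing with the collapsed circle direction. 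Summing the finitely many pieces (number uniformly bounded) gives $A(t) \le C\,w(t)\,s^2 t + (\text{cusp-collar terms}) = o(t)$, whence $t^{-1}A(t)\to 0$. I expect the bookkeeping of matching up the homotopies across the interfaces between pieces --- ensuring the competitor is a genuine continuous (piecewise smooth) homotopy from $\gamma_1(t)$ to $\gamma_2(t)$ and not just a disjoint collection of cylinders --- to be the fiddly part, handled by inserting, at each of the boundedly-many interface tori, a ``transition annulus'' lying in the $T^2\times I$ collar between adjacent pieces whose area is again controlled by thinness; and the genuinely substantive estimate is the thin-sweep-out bound on $V_2$-pieces, which is where the collapsing hypothesis $w(t)\to0$ does the real work.
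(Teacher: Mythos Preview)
Your approach is genuinely different from the paper's and, as written, has a real gap.

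The paper does \emph{not} build a competitor homotopy. Instead it follows \cite[Lemma~8.2]{Bam:longtimeI}: one derives a differential inequality for $t^{-1}A(t)$ along the Ricci flow, using the first-variation formula for the area of an (almost) least-area annulus together with Gauss--Bonnet. The scalar curvature term produces the good $-A/(4t^2)$, while the boundary contributions (geodesic curvature and length of $\gamma_1,\gamma_2$, plus the area swept by moving the boundary loops) are controlled because both boundary tori sit in regions that are close to hyperbolic cusps. Integrating the resulting inequality
\[
\frac{d}{dt^+}\bigl(t^{-1}A(t)\bigr)\;\le\;-\frac{A(t)}{4t^2}\;+\;C\Bigl(\frac{L(t)}{t}+e^{-b}b'\Bigr)
\]
gives $t^{-1}A(t)\to 0$. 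The point is that this monotonicity argument needs \emph{no control whatsoever on the diameter or internal structure of $\calc'(t)$}; only the boundary geometry matters.

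Your competitor construction, by contrast, hinges on exactly such internal control, and that is where it breaks. You assert that the decomposition $\calc'(t)=V_1\cup V_2\cup V_2'$ from \cite[Proposition~5.1]{bam:longtimeII} has ``boundedly-many'' pieces, each of diameter $\le s\rho_{\sqrt t}$. Neither statement is supplied by that proposition. The sets $U$ in part~(c3) are \emph{local} charts around individual points $x\in V_{2,\reg}$, with $U\subset B(x,t,s\rho_{\sqrt t}(x,t))$; they do not give a global diameter bound on $\calc_3$, nor a uniform bound on how many such charts are needed to cover $\calc'(t)$. The collar $\calc'(t)$ is diffeomorphic to $T^2\times[0,1]$, but its length (in either $g(t)$ or $t^{-1}g(t)$) is not a priori bounded as $t\to\infty$: it is a thin tube joining two cusp regions and may stretch. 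Consequently your area estimate, which in the end is of the form $(\text{number of pieces})\times C\,w(t)\,(\text{local diameter})^2$, does not yield $o(t)$ without an extra ingredient bounding the total length of the tube --- and no such ingredient is available at this stage of the argument. (Volume finiteness does not help: a tube of length $L$ and $T^2$-cross-section of area $\sim w^2$ has volume $\sim Lw^2$, so $L$ could be as large as $V_0/w(t)^2$, and then your swept area $\sim L\cdot w(t)$ blows up.) The paper's ODE method is designed precisely to avoid this difficulty.
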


\begin{proof}
It is identical to \cite[Lemma 8.2]{Bam:longtimeI}, except that we have to account for the fact that 
$\partial_{t} \gamma_{2}(t)$ may a priori not be bounded. This estimate appears when we compute the area added to the 
homotopy by moving the boundary curves. The infinitesimal added area 
to the homotopy due to the deplacement of $\gamma_1$ is negative (we can assume $\alpha' >0$), hence neglected. 
The contribution of $\gamma_2$, by closeness with the hyperbolic cusp, is bounded by $Ct.e^{-b}b'$. On the other hand, 
 the normalised length $t^{-1/2} \ell(\gamma_i) \to 0$ and the normalised geodesic curvature 
$t\kappa(\gamma_i(t)) < C$, by closeness with the hyperbolic situation.
Let us denote $L(t) = t^{-1/2}(\ell (\gamma_1(t))+\ell(\gamma_2(t))$. 
Computations in \cite[Lemma 8.2]{Bam:longtimeI} give 
(compare with equation (8.1) there)
\begin{equation*}
 \frac{d}{dt^+}(t^{-1}A(t)) \leq -\frac{A(t)}{4t^2}+ C\left(\frac{L(t)}{t} + e^{-b}b'\right).\label{eq:bamler}
  \end{equation*}
Denoting $y(t)=t^{-1}A(t)$ this gives the differential inequality 
$$ \frac{d}{dt^+}y \leq -y/4t + C(t^{-1}L+ e^{-b}b').$$ 
Using the standard method, one obtains that $y(t)=K(t)t^{-1/4}$ where 
$$\frac{d}{dt^+}K \leq Ct^{1/4}\left(t^{-1}L+  e^{-b}b' \right).$$ 
We can assume that $L(t)$ is almost nonincreasing, that is that for any $T_2 \le a \le t$, one has $L(t) \le 2L(a)$. Then 
for $T_2 \le a \le t$,
\begin{eqnarray*}
K(t) - K(a) & \leq & C\left (\int_a^t \left(u^{-3/4} L(u) + u^{1/4}e^{-b}b'\right)\, du\right)\\
 & \le & C\left(2L(a)\int_a^t u^{-3/4}\, du \, \, + t^{1/4} \int_a^t e^{-b}b'\, du\right)\\
    & \leq &  C\left( 8L(a)t^{1/4} + t^{1/4} e^{-b(a)} \right), 
\end{eqnarray*}
hence
\begin{eqnarray*}
y(t) & \leq & \frac{K(a)}{t^{1/4}} + \frac{K(t)-K(a)}{t^{1/4}} \\
  & \leq & \frac{K(a)}{t^{1/4}} + C\left(8L(a)+e^{-b(a)} \right),
\end{eqnarray*}
which is arbitrary small by taking $a$ then $t$ large enough.
\end{proof}

We conclude the proof of Lemma~\ref{lem:vone}. The argument is the same as the one given in 
\cite{Bam:longtimeI}. Consider smooth loops $\gamma,\beta$ in 
$\calc'(t)$ generating $\pi_1 \calc'(t)$. Let $\gamma_i(t)$, resp. $\beta_i(t)$, $i=1,2$, defined as above, 
freely homotopic to $\gamma$, resp. $\beta$. Let $A(t)$, resp. $B(t)$, be the infimum of the areas of all smooth 
homotopies  connecting $\gamma_1(t)$ to $\gamma_2(t)$, resp. $\beta_1(t)$ to $\beta_2(t)$. By Claim~\ref{claim:four}, 
\begin{equation} 
 t^{-1}A(t) + t^{-1} B(t) \to 0  \label{eq:area}
\end{equation}  
as $t \to \infty$. On the other hand let $H_\gamma$, resp. $H_\beta$, be any of these homotopies. At any time $t$ where $\calc_3$ is defined, 
any fibre of the projection $p : U \to \RR^2$ is a noncontractible loop $\subset \calc_3$, 
hence it intersects at least once the homotopies $H_\gamma$,$H_\beta$. For all such times $t$ large enough one has, 
using the fact that $p$ is $2$-bilipschitz and equation \eqref{eq:U}, that 
 $$\area(H_\gamma) + \area(H_\beta) \geq \frac{1}{4} \vol(p(U)) \geq cs^2\bar \rho t,$$
for some constant $c=c(s,\bar \rho)>0$. This contradicts \eqref{eq:area}.

\bibliographystyle{alpha}
\bibliography{ricci}

\end{document}